\newtheorem{theorem}{Theorem}[section]
\newtheorem{lemma}[theorem]{Lemma}
\newtheorem{proposition}[theorem]{Proposition}
\newtheorem{corollary}[theorem]{Corollary}
\theoremstyle{remark}
\newtheorem{remark}[theorem]{Remark}
\setlist[enumerate, 1]{label=\normalfont(\roman*)}
\DeclareMathOperator{\gr}{gr}
\DeclareMathOperator{\End}{End}
\DeclareMathOperator{\ch}{ch}
\DeclareMathOperator{\lm}{lm}
\DeclareMathOperator{\vspan}{span}
\DeclareMathOperator{\Ind}{Ind}
\DeclareMathOperator{\len}{len}
\newcommand{\vac}{{|0\rangle}}
\newcommand{\vachalf}{{|1/2\rangle}}
\newcommand{\vacsixteen}{{|1/16\rangle}}
\newcommand{\Vir}{{\mathrm{Vir}}}
\newcommand{\Id}{{\mathrm{Id}}}
\newcommand{\psn}{{\mathrm{psn}}}
\renewcommand*{\backref}[1]{}
\renewcommand*{\backrefalt}[4]{%
  \ifcase #1 (Not cited.)%
  \or        (Cited on page~#2.)%
  \else      (Cited on pages~#2.)%
  \fi}
\begin{document}

\setcounter{section}{-1}

\begin{abstract}
  To every $h + \mathbb{N}$-graded module $M$ over an $\mathbb{N}$-graded conformal vertex algebra $V$, we associate an increasing filtration $(G^pM)_{p \in \mathbb{Z}}$, which is compatible with the filtrations introduced by Haisheng Li.
  The associated graded vector space $\gr^G(M)$ is naturally a module over the vertex Poisson algebra $\gr^G(V)$.
  We study $\gr^G(M)$ for the three irreducible modules over the Ising model $\Vir_{3, 4}$, namely $\Vir_{3,4} = L(1/2, 0)$, $L(1/2, 1/2)$ and $L(1/2, 1/16)$.
  We obtain an explicit PBW basis of each of these modules and a formula for their refined characters, which are related to Nahm sums for the matrix $\left(\begin{smallmatrix} 8 & 3 \\ 3 & 2 \end{smallmatrix}\right)$. \\
  \smallskip
  \noindent \textbf{Keywords.} Vertex algebras, Ising model, Nahm sums, quantum algebra, combinatorics.
\end{abstract}

\title{PBW bases of irreducible Ising modules}
\author{Diego Salazar}
\address{Instituto de Matemática Pura e Aplicada, Rio de Janeiro, RJ, Brazil}
\email{diego.salazar@impa.br}
\date{\today}
\maketitle

\section{Introduction}
\label{sec:introduction}

In \cite{li_vertex_2004}, Li introduced an increasing filtration $(G^pV)_{p \in \mathbb{Z}}$ on an arbitrary $\mathbb{N}$-graded vertex algebra $V$.
The associated graded space $\gr^G(V)$ with respect to this increasing filtration then carries the structure of an $\mathbb{N}$-graded vertex Poisson algebra.

Then in \cite{li_abelianizing_2005}, Li introduced a decreasing filtration $(F_pV)_{p \in \mathbb{Z}}$ on an arbitrary vertex algebra $V$, not necessarily $\mathbb{N}$-graded.
The associated graded space $\gr_F(V)$ with respect to this decreasing filtration again carries the structure of a vertex Poisson algebra.
Li also introduced a decreasing filtration $(F_pM)_{p \in \mathbb{Z}}$ for modules $M$ over a vertex algebra $V$ and showed that the associated graded space $\gr_F(M)$ is a module over the vertex Poisson algebra $\gr_F(V)$.

In summary, Li constructed three functors:
\begin{align*}
  \gr^G: \{\text{$\mathbb{N}$-graded vertex algebras}\} &\to \{\text{$\mathbb{N}$-graded vertex Poisson algebras}\}, \\
  \gr_F: \{\text{vertex algebras}\} &\to \{\text{vertex Poisson algebras}\}, \\
  \gr_F: \{\text{$V$-modules}\} &\to \{\text{$\gr_F(V)$-modules}\}.
\end{align*}

Then Arakawa showed in \cite[Proposition 2.6.1]{arakawa_remark_2012} that when $V$ is $\mathbb{N}$-graded, $\gr_F(V)$ and $\gr^G(V)$ are isomorphic as vertex Poisson algebras.

In this article, we define an increasing filtration $(G^pM)_{p \in \mathbb{Z}}$ for $h + \mathbb{N}$-graded modules $M$ over an $\mathbb{N}$-graded conformal vertex algebra $(V, \omega)$.
We construct a functor
\begin{equation*}
  \gr^G: \{\text{$h + \mathbb{N}$-graded $(V, \omega)$-modules}\} \to \{\text{$h + \mathbb{N}$-graded $\gr^G(V)$-modules}\}.
\end{equation*}
Again, the resulting modules $\gr^G(M)$ and $\gr_F(M)$ are isomorphic.
However, for our purposes, the filtration $(G^pM)_{p \in \mathbb{Z}}$ is better suited.

In \cite{andrews_singular_2022}, two theorems about the Virasoro minimal model $\Vir_{3, 4} = L(1/2, 0)$, also known as the Ising model, are proved.

\begin{theorem}
  \label{thr:1}
  The refined character of $\gr^G(\Vir_{3,4})$ is given by
  \begin{equation*}
    \ch_{\gr^G(\Vir_{3, 4})}(t, q) = \sum_{k_1, k_2 \in \mathbb{N}}t^{4k_1 + 2k_2}\frac{q^{4k_1^2 + 3k_1k_2 + k_2^2}}{(q)_{k_1}(q)_{k_2}}(1 - q^{k_1} + q^{k_1 + k_2}).
  \end{equation*}
\end{theorem}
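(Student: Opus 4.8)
The plan is to compute the refined character by exhibiting an explicit PBW (monomial) basis of $\gr^G(\Vir_{3,4})$ and then summing the corresponding generating function. Since $\gr^G(V)$ is an $\mathbb{N}$-graded vertex Poisson algebra generated by the image $\bar\omega$ of the conformal vector, $\gr^G(\Vir_{3,4})$ is a quotient of the free commutative algebra on the modes $\bar\omega_{(-n)}$, $n \geq 1$ (equivalently $\mathbb{C}[\partial^j \bar\omega : j \geq 0]$), and the whole problem is to identify the defining relations coming from the singular vectors of $L(1/2,0)$. The key input is the result of \cite{andrews_singular_2022}: the maximal proper submodule of the vacuum Verma module is generated by the singular vector at conformal weight $6$, whose symbol in $\gr^G$ gives the leading relation; concretely one expects a relation of the form $\bar\omega_{(-2)}^{\,3} \in \bigl(\text{lower terms}\bigr)$, i.e. $\bar\omega_{(-2)}^3$ and its $\partial$-derivatives are expressible through monomials involving $\bar\omega_{(-3)}$, $\bar\omega_{(-4)}$, etc. First I would make this relation and all its descendants precise, and argue (using that $\gr^G$ of a quotient is the quotient by the symbols, together with a dimension count against the known character of $\Vir_{3,4}$) that these are \emph{all} the relations.

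Next I would set up a straightening/rewriting algorithm: order the generators $\partial^j\bar\omega$ and use the relation above (a leading term $\bar\omega_{(-2)}^3 = \cdots$, which after differentiation controls $\bar\omega_{(-2)}^2\bar\omega_{(-3)}$, etc.) to reduce every monomial to a normal form. The claim will be that the normal-form monomials are those in which, writing a monomial in the $\bar\omega_{(-n)}$ and tracking the two statistics $k_1 = $ (number of factors $\bar\omega_{(-2)}$ in some packaging) and $k_2 = $ (a second multiplicity), the exponent pattern is constrained exactly so that the generating function is the stated Nahm-type sum for $\left(\begin{smallmatrix} 8 & 3 \\ 3 & 2 \end{smallmatrix}\right)$. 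I would then verify that the set of normal forms is linearly independent — e.g. by checking that no further relation is forced (the $S$-polynomial of the leading relation with itself and with $\partial$ reduces to zero, a Gröbner-basis confirmation) and by matching graded dimensions with the classical Rocha-Caridi character of $L(1/2,0)$.

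Finally I would carry out the character computation: the two variables are $q$ tracking conformal weight and $t$ tracking the $G$-filtration degree (equivalently the number of modes, i.e. the "length"). Each $\bar\omega_{(-n)}$ contributes $q^n$ and one unit of $t$-degree... wait — the appearance of $t^{4k_1+2k_2}$ and $q^{4k_1^2+3k_1k_2+k_2^2}$ shows the natural bookkeeping groups the generators into two towers of "charges" $4$ and $2$ respectively, with the quadratic form being exactly the Gram matrix $\tfrac12\left(\begin{smallmatrix} 8 & 3 \\ 3 & 2\end{smallmatrix}\right)$; summing the geometric-type contributions of each tower yields the $\frac{1}{(q)_{k_1}(q)_{k_2}}$ denominators, and the trinomial factor $1 - q^{k_1} + q^{k_1+k_2}$ encodes the three "shapes" of normal forms near the leading relation (the three cosets modulo the ideal generated by $\bar\omega_{(-2)}^3$ at the top of each block). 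Assembling these gives the formula.

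The main obstacle I anticipate is the combinatorial core of the middle step: proving that the monomial straightening terminates with exactly the claimed normal-form set, and in particular extracting the clean two-parameter description with the trinomial correction $1 - q^{k_1} + q^{k_1+k_2}$ from the singular-vector relation. Establishing the relation and matching characters at the end are comparatively routine; the heart of the argument is showing the PBW basis is both spanning (rewriting terminates) and independent (no hidden relations), and then recognizing the resulting sum as the advertised Nahm sum.
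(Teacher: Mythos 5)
Your two-step scheme --- (a) read off the leading relation $L_{-2}^{3}$ from the symbol of the weight-$6$ singular vector, close the argument by a dimension count against the known character of $L(1/2,0)$, and (b) enumerate the surviving normal-form monomials and recognize the resulting sum --- is indeed the scheme of \cite{andrews_singular_2022}, which is where \Cref{thr:1} is actually proved (the present paper only quotes that theorem, and re-runs the same scheme for the modules $L(1/2,1/2)$ and $L(1/2,1/16)$). The genuine gap sits exactly at the step you describe as a ``Gr\"obner-basis confirmation'': the $S$-polynomials do \emph{not} all reduce to zero. The monomial ideal of leading terms of $I$ is strictly larger than the one generated by $L_{-2}^{3}$ together with its $\partial$-derivatives and their multiples; the reduction process produces further elements whose leading monomials account for the infinite families of length $4$ and $5$ in $R^{0}$ and for the four exceptional partitions $[5,4,2,2]$, $[7,6,4,2,2]$, $[7,7,4,2,2]$, $[9,8,6,4,2,2]$. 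If these are omitted, your proposed normal-form set is too large, the graded dimensions overshoot the Rocha--Caridi character, and the final independence check would refute rather than confirm the basis. Producing the complete list (software-assisted, with a careful choice of monomial order so that row reduction suffices) is the computational heart of the argument; it is what \Cref{sec:comp-lead-monom} does in the module setting, and no purely conceptual shortcut around it is offered here or in \cite{andrews_singular_2022}.

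A second, smaller inaccuracy: the factor $1 - q^{k_1} + q^{k_1+k_2}$ cannot literally count ``three shapes of normal forms'' or ``three cosets,'' since it has a negative term. In the actual argument one first decomposes the set of admissible partitions into disjoint subsets according to their smallest parts, derives a system of recurrences whose solution is a collection of series $f_{a,b,c,d}(t,q)$, and only then combines them using identities of the type in \Cref{lmm:1}; the trinomial appears at the end of that manipulation, not as a direct census of monomials. So the clean two-tower reading of the quadratic form $4k_1^2+3k_1k_2+k_2^2$ that you hope to extract directly from the normal forms is an artifact of the final rewriting, and your plan still needs the combinatorial machinery of \Cref{sec:comb-argum} (or an equivalent) to get from the avoidance description of $P^{0}$ to the stated sum.
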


Let $R^0$ be the following set of partitions
\begin{align*}
  &[r, r, r], [r + 1, r, r], [r + 1, r + 1, r], [r + 2, r + 1, r], [r + 2, r + 2, r], &(r \ge 2) \\
  &[r + 2, r, r], &(r \ge 3) \\
  &[r + 3, r + 3, r, r], [r + 4, r + 3, r, r],  [r + 4, r + 3, r + 1, r], [r + 4, r + 4, r + 1, r], &(r \ge 2) \\
  &[r + 6, r + 5, r + 3, r + 1, r], &(r \ge 2) \\
  &[5, 4, 2, 2], [7, 6, 4, 2, 2], [7, 7, 4, 2, 2], [9, 8, 6, 4, 2, 2].
\end{align*}
Let $P^0$ be the set of partitions $\lambda = [\lambda_1, \dots, \lambda_m]$ with $\lambda_m \ge 2$ that do not contain any partition in $R^0$.

\begin{theorem}
  \label{thr:2}
  The set
  \begin{equation*}
    \{L_{-\lambda_1}L_{-\lambda_2}\dots L_{-\lambda_m}\vac \mid \lambda = [\lambda_1, \dots, \lambda_m] \in P^0\}
  \end{equation*}
  is a vector space basis of $\Vir_{3, 4}$.
\end{theorem}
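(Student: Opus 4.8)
The plan is to prove Theorem~\ref{thr:2} by combining the character formula of Theorem~\ref{thr:1} with a spanning argument and a dimension count. The strategy splits naturally into two halves: showing that the proposed vectors span $\Vir_{3,4}$, and showing that they are linearly independent.

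\textbf{Spanning.} First I would recall that $\Vir_{3,4}$ is a quotient of the Verma module $M(1/2,0)$ (or rather of the vacuum Verma module for the Virasoro algebra), so the monomials $L_{-\lambda_1}\cdots L_{-\lambda_m}\vac$ over \emph{all} partitions $\lambda$ with $\lambda_m \ge 2$ already span $\Vir_{3,4}$ (using $L_{-1}\vac = 0$ to restrict parts to be $\ge 2$). The task is then to show that any monomial whose partition $\lambda$ \emph{does} contain some pattern in $R^0$ can be rewritten as a linear combination of monomials associated to partitions that are strictly smaller in a suitable well-order (say, reverse-lexicographic refined by length, or the order induced by the grading plus lexicographic tie-breaking). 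For this, the key input is the existence of singular vectors / null relations in $\Vir_{3,4}$: the maximal proper submodule of $M(1/2,0)$ is generated by a singular vector, and more relations are obtained by acting on it with $U(\Vir)$ and by the normal-ordering (commutation) relations of the Virasoro algebra. Concretely, each "forbidden" minimal pattern in $R^0$ should correspond to one of these relations: rewriting $L_{-\lambda}\vac$ using the relation expresses it in terms of partitions not containing that pattern, at the cost of possibly introducing other (smaller) partitions, some of which may again be forbidden; one then iterates. I would organize this as a Gröbner-basis–style argument in the universal enveloping algebra of the Virasoro algebra modulo the defining ideal of $\Vir_{3,4}$, checking that the leading terms of a generating set of relations are exactly the monomials $L_{-\mu}$ for $\mu \in R^0$ (this is presumably where the explicit list $R^0$ comes from, likely found or verified by computer algebra in low degrees and then propagated to all $r$ by an inductive commutator argument).

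\textbf{Linear independence.} Here I would use Theorem~\ref{thr:1} together with the filtration machinery from the introduction. The increasing filtration $G^\bullet$ on $\Vir_{3,4}$ has associated graded $\gr^G(\Vir_{3,4})$, a module over the vertex Poisson algebra $\gr^G(\Vir_{3,4})$, and by construction the images of the monomials $L_{-\lambda_1}\cdots L_{-\lambda_m}\vac$ in $\gr^G$ are (up to reindexing/leading-term identification) the PBW-type monomials in the commutative algebra $\gr^G(V)$. Comparing $q$-graded dimensions: the ordinary character of $\Vir_{3,4}$ equals $\ch_{\gr^G(\Vir_{3,4})}(q)$ after setting $t = 1$ in Theorem~\ref{thr:1}, since passing to the associated graded preserves the $q$-grading. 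So it suffices to check that the generating function $\sum_{\lambda \in P^0} q^{|\lambda|}$ counting the proposed basis elements by degree equals $\ch_{\gr^G(\Vir_{3,4})}(1, q)$ — equivalently, that the combinatorial count of partitions avoiding all patterns in $R^0$ matches the Andrews–Gordon–type/Nahm sum on the right-hand side of Theorem~\ref{thr:1}. Since the spanning argument already shows $\dim (\Vir_{3,4})_n \le \#\{\lambda \in P^0 : |\lambda| = n\}$ and the character formula gives $\dim (\Vir_{3,4})_n = [q^n]\,\ch_{\gr^G(\Vir_{3,4})}(1,q)$, once the two generating functions agree the spanning set must be a basis.

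\textbf{Main obstacle.} The hard part will be the combinatorial identity asserting that $\sum_{\lambda \in P^0} q^{|\lambda|}$ equals the specialization at $t = 1$ of the Nahm-type sum in Theorem~\ref{thr:1} — i.e., proving a Rogers–Ramanujan–type partition identity for the rather intricate family of forbidden patterns $R^0$. I would attack this by setting up a system of $q$-difference equations (or recurrences) for the refined generating functions of partitions in $P^0$ stratified by their smallest part $r$ and by $m \bmod$ something, matching them against the recurrences satisfied by the Nahm sum; alternatively, a sign-reversing involution or an Andrews–Bressoud–style "hard-to-soft" bijection could be used. A secondary technical difficulty is making the spanning/leading-term argument uniform over all $r \ge 2$ (or $r \ge 3$): one must verify that the commutator relations needed to reduce the shape $[r+6, r+5, r+3, r+1, r]$, etc., really do have leading term supported inside $R^0$ for every $r$, which requires a careful induction on $r$ controlling the "error terms" produced by the Virasoro bracket $[L_m, L_n] = (m-n)L_{m+n} + \tfrac{1}{24}(m^3-m)\delta_{m+n,0}c$ with $c = 1/2$.
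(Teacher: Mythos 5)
Your two-pronged strategy (spanning via leading-term reduction against the null relations coming from the singular vector, linear independence via a dimension count against a character/partition identity proved by $q$-difference equations) is exactly the architecture used for this result in \cite{andrews_singular_2022} and for the analogous \Cref{thr:4} and \Cref{thr:5} proved in this paper, so in outline you are on the right track. Two adjustments are needed, one logical and one technical. The logical one: you should not invoke \Cref{thr:1} as an input to the dimension count, because in this paper's logic the refined character is \emph{deduced from} the PBW basis theorem (compare the proof of \Cref{thr:3}, which cites \Cref{thr:4}); using it here would be circular. What the dimension count actually requires is only the $t=1$ specialization, i.e.\ the classically known (unrefined) character of $L(1/2,0)$, combined with the combinatorial identity $\sum_{\lambda\in P^0}t^{\len(\lambda)}q^{\Delta(\lambda)}=\sum f_{a,b,c,d}(t,q)$ evaluated at $t=1$; this is precisely how \Cref{lmm:6} is organized for the module case, and your proposed method for that identity (recurrences for the generating functions stratified by the smallest parts) is what \Cref{sec:comb-argum} carries out.

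The technical one concerns where the Gr\"obner-type reduction lives. You propose running it noncommutatively in $U(\Vir)$ modulo the defining ideal, controlling error terms from the Virasoro bracket by induction on $r$. The paper instead first passes to the associated graded $\gr^G$, where $\gr^G(V)\cong\mathbb{C}[L_{-2},L_{-3},\dots]$ is a commutative polynomial ring (with trivial vertex Poisson bracket), so that ``leading term of a relation'' becomes ordinary monomial divisibility and standard commutative Gr\"obner basis theory applies on the truncations $F_N$. The uniformity in $r$ that you correctly identify as delicate is then handled not by commutator estimates but by the multiplicativity of $\lm$ together with the derivation/Poisson action (see \Cref{rmk:8} and \Cref{lmm:10}): a finite, computer-verified list of elements whose leading monomials realize the generators of $R^0$ is propagated to all of $\overline{R}$ by multiplying by power products and appending parts. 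Without this passage to $\gr^G$ (or an equivalent device), your noncommutative rewriting argument would still need a termination proof and a verification that the iteration does not reintroduce larger forbidden monomials, which is exactly what the commutative Gr\"obner framework gives for free via \cite[Proposition 3.6.4]{adams_introduction_1994}.
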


In this article, we prove two similar results about $L(1/2, 1/2)$ and $L(1/2, 1/16)$.

\begin{theorem}
  \label{thr:3}
  The refined character of $\gr^G(L(1/2, 1/2))$ is given by
  \begin{equation*}
    \ch_{\gr^G(L(1/2, 1/2))}(t, q) = q^{1/2}\left(\sum_{k_1, k_2 \in \mathbb{N}}t^{4k_1 + 2k_2}\frac{q^{4k_1^2 + 3k_1k_2 + k_2^2}}{(q)_{k_1}(q)_{k_2}}(q^{3k_1 + 2k_2} + q^{5k_1 + 2k_2 + 1}t + q^{6k_1 + 3k_2 + 2}t^2)\right).
  \end{equation*}
\end{theorem}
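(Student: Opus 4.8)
The plan is to imitate the strategy that (by the discussion in the introduction) underlies Theorems~\ref{thr:1} and~\ref{thr:2}, working instead with the module $M = L(1/2,1/2)$ in place of $\Vir_{3,4}$. First I would set up the increasing filtration $(G^pM)_{p\in\mathbb{Z}}$ and identify $\gr^G(M)$ as a module over the vertex Poisson algebra $\gr^G(\Vir_{3,4})$. Since $\Vir_{3,4}$ is generated by the Virasoro element $\omega$, the algebra $\gr^G(\Vir_{3,4})$ is a quotient of the free commutative vertex algebra (a differential polynomial ring) on one generator, and $\gr^G(M)$ is a cyclic module generated by the image of the highest-weight vector $\vachalf$; concretely $\gr^G(M)$ is spanned by PBW-type monomials $L_{-\lambda_1}\cdots L_{-\lambda_m}\vachalf$ indexed by partitions $\lambda$. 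The first real task is to determine the relations: which monomials vanish in $\gr^G(M)$, i.e.\ which ``null vectors'' of the associated graded module must be quotiented out. By general principles the singular vector of $L(1/2,1/2)$ at the appropriate degree, reduced to its symbol in $\gr^G$, generates a differential ideal of forbidden sub-partitions; I expect this to produce an exclusion set $R^{1/2}$ of partitions playing the role of $R^0$, and a corresponding admissible set $P^{1/2}$ (with the bottom-part condition $\lambda_m\ge 2$, now relative to the conformal weight $1/2$).

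The second step is to show that the admissible monomials $\{L_{-\lambda}\vachalf \mid \lambda \in P^{1/2}\}$ are \emph{spanning}. Here one runs the combinatorial rewriting/straightening algorithm: any monomial containing a forbidden pattern from $R^{1/2}$ can be rewritten, using the defining relations of $\gr^G(M)$ and the derivation $\partial = L_{-1}$, as a $\mathbb{Z}$-linear combination of monomials that are lexicographically smaller (or shorter), so the process terminates. This is essentially a Gröbner-basis argument in the differential polynomial ring, and checking that $R^{1/2}$ is closed under the rewriting (a ``confluence''/closure check) is the combinatorial heart; it is the same type of finite case analysis as in \cite{andrews_singular_2022}, transported to the new module, plus the handful of sporadic small partitions that always appear as boundary exceptions.

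The third step is the matching character computation. On one hand, the graded dimension of $\gr^G(M)$ equals that of $M = L(1/2,1/2)$, whose (refined) character is known from the Rocha-Caridi/Feigin–Fuchs character of the Virasoro minimal module $L(1/2,1/2)$ — a specialization of the Weyl–Kac type formula — and I would use the known fermionic (Nahm-sum) form for $c=1/2$ modules, which is exactly where the matrix $\left(\begin{smallmatrix} 8&3\\3&2\end{smallmatrix}\right)$ and the prefactor $q^{1/2}$ come from. On the other hand, the refined character of the PBW-spanned side is the generating function $\sum_{\lambda\in P^{1/2}} t^{\,|\{\text{parts}\}|}\, q^{\,1/2 + |\lambda|}$ counting admissible partitions by number of parts and by size; a direct combinatorial sieve (inclusion–exclusion over the minimal forbidden patterns in $R^{1/2}$, organized by the ``staircase'' structure visible in the list of exceptions) rewrites this sum into the claimed double Nahm-type sum with summand $q^{4k_1^2+3k_1k_2+k_2^2}/((q)_{k_1}(q)_{k_2})$ times the polynomial correction $q^{3k_1+2k_2}+q^{5k_1+2k_2+1}t+q^{6k_1+3k_2+2}t^2$. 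Equality of the two refined characters then forces the spanning set to be a basis (no nontrivial linear relations remain), completing the proof.

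I expect the main obstacle to be the closure/confluence verification for $R^{1/2}$: one must show that the chosen finite list of forbidden partitions is genuinely closed under all the quadratic relations coming from the symbol of the $L(1/2,1/2)$ singular vector together with $\partial$-translation, with no missing patterns and no redundant ones, and simultaneously that the resulting admissible count reproduces the target $q$-series exactly. The sporadic low-weight partitions (analogous to $[5,4,2,2]$, $[7,6,4,2,2]$, etc.\ in $R^0$) are where errors hide, so I would pin those down by an independent low-order check: compute $\dim L(1/2,1/2)_n$ for small $n$ from the Virasoro character and confront it term-by-term with $\#\{\lambda\in P^{1/2} : |\lambda| = n\}$ before trusting the general combinatorial identity.
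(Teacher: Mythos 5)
Your overall strategy matches the paper's: exhibit a PBW-type basis of $\gr^G(L(1/2,1/2))$ indexed by a pattern-avoidance set $P^{1/2}$ (this is \Cref{thr:4}, proved by a Gr\"obner-basis/leading-monomial argument for the relation module $K$, sandwiched against the known character of $L(1/2,1/2)$), and then convert the generating function of $P^{1/2}$ into the Nahm-type sum (the paper's \Cref{lmm:2}--\Cref{lmm:5}, which proceeds by a system of recurrences among subfamilies $P_{>2}, P_{>2,1}, \dots$ rather than by the inclusion--exclusion you sketch, but plays the same role). One structural difference worth noting: you propose to verify directly that $R^{1/2}$ is ``closed under rewriting'' (a confluence check). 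The paper avoids this entirely: it only proves the easy inclusion that every pattern in $\overline{R}$ occurs as a leading monomial of an element of $K$ (\Cref{lmm:10}), and then forces equality of the two candidate bases by a dimension count against the known character of $L(1/2,1/2)$. So the hard direction of your confluence check is replaced by the character identity, and you should not plan on both --- the counting argument makes the closure verification unnecessary.

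The genuine gap concerns the refined grading, which is the actual content of \Cref{thr:3}. You write the refined character as $\sum_{\lambda\in P^{1/2}} t^{\#\{\text{parts}\}}q^{1/2+|\lambda|}$, i.e., $t$ records the number of parts. That is the PBW filtration, not Li's standard filtration. In the standard filtration each operator $L_{-j}$ with $j\ge 2$ contributes $2$ to the filtration degree (the conformal weight of $\omega$), while $L_{-1}=\omega^M_{(0)}$ contributes only $1$ by \Cref{prp:2}(vi); see \Cref{rmk:4}. Hence the correct exponent is $\len(\lambda)=2m+n$, where $m$ is the number of parts $\ge 2$ and $n$ is the number of ones. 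This is precisely what produces the stated formula: the three terms $t^0, t^1, t^2$ in the correction polynomial correspond to the decomposition $P=P_{>2}\cup P_{>2,1}\cup P_{>3,1,1}$ by the number of trailing ones, and the prefactor is $t^{4k_1+2k_2}$ rather than $t^{2k_1+k_2}$. With your grading the sieve would output a different (incorrect) series. Relatedly, there is no bottom-part condition $\lambda_m\ge 2$ in the module case --- $L_{-1}\vachalf\ne 0$, and ones are instead controlled by the patterns $[2]$, $[1,1,1]$, $[3,1,1]$, etc.\ in $R^{1/2}$; your parenthetical guess about transporting the condition ``relative to the conformal weight $1/2$'' would exclude vectors that are genuinely nonzero in $L(1/2,1/2)$.
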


Let $R^{1/2}$ be the following set of partitions
\begin{align*}
  &[r, r, r], [r + 1, r, r], [r + 1, r + 1, r], [r + 2, r + 1, r], [r + 2, r + 2, r], &(r \ge 3) \\
  &[r + 2, r, r], &(r \ge 3) \\
  &[r + 3, r + 3, r, r], [r + 4, r + 3, r, r],  [r + 4, r + 3, r + 1, r], [r + 4, r + 4, r + 1, r], &(r \ge 3)\\
  &[r + 6, r + 5, r + 3, r + 1, r], &(r \ge 3) \\
  &[2], [1, 1, 1], [3, 1, 1], [3, 3], [4, 3, 1], [4, 4, 1], [5, 4, 1, 1], [6, 5, 3, 1].
\end{align*}
Let $P^{1/2}$ be the set of partitions that do not contain any partition in $R^{1/2}$.

\begin{theorem}
  \label{thr:4}
  The set
  \begin{equation*}
    \{L_{-\lambda_1}L_{-\lambda_2}\dots L_{-\lambda_m}\vachalf \mid \lambda = [\lambda_1, \dots, \lambda_m] \in P^{1/2}\}
  \end{equation*}
  is a vector space basis of $L(1/2, 1/2)$.
\end{theorem}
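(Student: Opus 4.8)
The plan is to follow the same strategy that presumably underlies the proof of \Cref{thr:2}, adapting it to the module $L(1/2, 1/2)$ and transporting the combinatorial data through the filtration. First I would pass to the associated graded module $\gr^G(L(1/2, 1/2))$, which by the general construction is a cyclic module over the vertex Poisson algebra $\gr^G(\Vir_{3,4})$, generated by the image $\overline{\vachalf}$ of the highest-weight vector. Since $\gr^G(\Vir_{3,4})$ is a quotient of the arc algebra built from $\mathbb{C}[L_{-2}, L_{-3}, \dots]$ (the commutative vertex algebra on the Virasoro generators of weight $\ge 2$), spanning sets of $\gr^G(L(1/2,1/2))$ are indexed by partitions $\lambda$, and the vectors $L_{-\lambda_1}\cdots L_{-\lambda_m}\vachalf$ span $L(1/2, 1/2)$ as $\lambda$ ranges over all partitions (this is just the usual spanning by ordered Virasoro monomials on a highest-weight vector). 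The content of \Cref{thr:4} is then (a) that it suffices to restrict to $\lambda \in P^{1/2}$, i.e. the relations coming from the singular vector kill every monomial indexed by a partition containing some element of $R^{1/2}$, and (b) that these restricted vectors are linearly independent, which I would get by matching generating functions against \Cref{thr:3}.

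The key steps, in order, would be: (1) Identify the defining ideal of $\gr^G(L(1/2,1/2))$ inside $\gr^G(\Vir_{3,4})$. The module $L(1/2,1/2)$ is the quotient of the Verma-type module over $\Vir_{3,4}$ by the maximal submodule, and there is a distinguished singular vector (at conformal weight $3/2$ above the top, since the relevant entry of the Kac table gives the null vector of $L(1/2,1/2)$ at level $2$); I would write this singular vector explicitly, compute its symbol in $\gr^G$, and take the $\gr^G(\Vir_{3,4})$-submodule it generates. (2) Show combinatorially that the leading terms of the elements of this ideal, under the chosen monomial order $\lm$ on partitions, are exactly the monomials $L_{-\mu}$ with $\mu \supseteq \rho$ for some $\rho \in R^{1/2}$. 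This is the step where the explicit list $R^{1/2}$ gets pinned down: one takes the symbol of the singular vector and all its "derivatives" and products (the vertex-Poisson operations $\partial$, the $n$-th products with $L_{-2}$), reduces, and reads off the minimal forbidden patterns — the exceptional small partitions $[2], [1,1,1], [3,1,1], [3,3], [4,3,1], [4,4,1], [5,4,1,1], [6,5,3,1]$ presumably arise from low-weight reductions where the generic family $r \ge 3$ would otherwise have given $r = 2$ instances. (3) Conclude that $\{L_{-\lambda}\overline{\vachalf} \mid \lambda \in P^{1/2}\}$ spans $\gr^G(L(1/2,1/2))$, hence (lifting) that $\{L_{-\lambda}\vachalf \mid \lambda \in P^{1/2}\}$ spans $L(1/2,1/2)$. (4) Prove linear independence by showing that the refined character of the span of these vectors equals the right-hand side of \Cref{thr:3}; since $\gr^G$ does not change graded dimensions and \Cref{thr:3} gives the true character, equality of dimensions in every bidegree forces independence. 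Step (4) is really a partition-identity computation: one must show $\sum_{\lambda \in P^{1/2}} t^{|\lambda|} q^{\dots}$ equals the stated Nahm-type sum, which can be done via a recursion on the largest part / number of parts in the spirit of Andrews–Baxter, or by a direct bijective/transfer-matrix argument.

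The main obstacle will be Step (2): correctly identifying the \emph{complete} and \emph{minimal} list of forbidden sub-partitions $R^{1/2}$ from the singular-vector ideal. Getting the infinite families parametrized by $r$ is the delicate part — one has to check that after performing all straightening/reduction moves in $\gr^G(\Vir_{3,4})$ (which itself is not free, being the quotient by the symbol of the level-$6$ singular vector of $\Vir_{3,4}$ that produces the $R^0$ patterns), no further relations appear and none of the listed patterns is redundant. I expect the argument to proceed by first establishing the span statement with the \emph{claimed} $R^{1/2}$ (an inclusion of ideals, relatively mechanical once the singular vector is in hand), and then using the character identity of Step (4) as the certificate that nothing more is needed and nothing listed is superfluous: if the character of the $P^{1/2}$-span already matches \Cref{thr:3}, the spanning set must be a basis and $R^{1/2}$ must be exactly right. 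A secondary technical point is bookkeeping the grading shift $q^{1/2}$ coming from the conformal weight $h = 1/2$ of the top space, and the appearance of three "boundary" terms $q^{3k_1+2k_2} + q^{5k_1+2k_2+1}t + q^{6k_1+3k_2+2}t^2$ in \Cref{thr:3} rather than the single correction $(1 - q^{k_1} + q^{k_1+k_2})$ of \Cref{thr:1}; tracking which exceptional partitions in $R^{1/2}$ account for which of these terms is the combinatorial heart of matching the two sides.
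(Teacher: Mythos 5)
Your overall strategy is the one the paper uses: pass to $\gr^G$, realize the image of $W=J(1/2,1/2)$ as a submodule $K$ of a free module, show every partition containing a pattern of $R^{1/2}$ is a leading monomial of $K$ (hence the pattern-avoiding monomials contain a Gr\"obner-theoretic basis of the quotient), and then certify that nothing further is needed by a cardinality/character count. Your closing remark that one only needs one inclusion plus the character identity "as the certificate" is exactly the paper's logic (its Lemma 3.4 plus the counting step). Two points, however, are genuine gaps as written.

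First, your Step (1) takes the defining ideal to be generated by \emph{a} singular vector at level $2$. For $h=1/2=h_{2,1}$ in the $(3,4)$ Kac table the maximal proper submodule of $M(1/2,1/2)$ requires \emph{two} generating singular vectors, $u_2=(L_{-1}^2-\tfrac43L_{-2})\vachalf$ at level $2$ and $u_3=(L_{-1}^3-3L_{-2}L_{-1}+\tfrac34L_{-3})\vachalf$ at level $3$ (and your "conformal weight $3/2$ above the top" is a slip in any case). With only the level-$2$ vector the ideal of leading terms is strictly too small — e.g.\ the forbidden pattern $[1,1,1]$ comes from the weight-$3$ graded piece, which is $2$-dimensional only once $u_3$ is included — so the list $R^{1/2}$ you would extract in Step (2) would be wrong. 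Second, your Step (4) proves linear independence "by matching generating functions against \Cref{thr:3}," but in the paper \Cref{thr:3} is \emph{deduced from} \Cref{thr:4}; as stated this is circular. The correct independent input is the classical (unrefined) character of $L(1/2,1/2)$ together with the combinatorial identity expressing $\sum_{\lambda\in P^{1/2}}t^{\len(\lambda)}q^{\Delta(\lambda)}$ as a sum of Nahm-type series (the paper's \S2); comparing these at $t=1$ gives the dimension count in each weight, and only afterwards does the refined character of \Cref{thr:3} follow. You do sketch the partition-identity recursion, so the fix is to redirect the comparison target. A smaller technical point: Gr\"obner reduction should be run in the free module $\bigoplus_k\mathbb{C}[L_{-2},L_{-3},\dots]L_{-1}^k$ over the polynomial ring $\gr^G(\Vir^{1/2})$, with the $\Vir_{3,4}$-relations imported into $K$, rather than over the non-free $\gr^G(\Vir_{3,4})$ as you propose; you notice the non-freeness but do not resolve it.
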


Specializing the character formula in \zcref{thr:3} to $t = 1$, we obtain that the character of $L(1/2, 1/2)$ is the sum of three Nahm sums for the same matrix $\left(\begin{smallmatrix} 8 & 3 \\ 3 & 2 \end{smallmatrix}\right)$ (cf.\ \cite{Nahm2007} and \cite{andrews_singular_2022}).
The partitions in $P^0$ also have a combinatorial interpretation, as was noted in \cite{andrews_singular_2022} and \cite{tsuchioka_vertex_2023}.
For example, for $n \in \mathbb{N}$, the number of partitions of $n$ in $P^0$ is the number of partitions of $n$ with parts congruent to $\pm2$, $\pm3$, $\pm4$ and $\pm5$ modulo $16$.

\begin{theorem}
  \label{thr:5}
  The refined character of $\gr^G(L(1/2, 1/16))$ is given by
  \begin{equation*}
    \ch_{\gr^G(L(1/2, 1/16))}(t, q) = q^{1/16}\left(\sum_{k_1, k_2 \in \mathbb{N}}t^{4k_1 + 2k_2}\frac{q^{4k_1^2 + 3k_1k_2 + k_2^2}}{(q)_{k_1}(q)_{k_2}}(q^{k_1 + k_2} + q^{4k_1 + 2k_2 + 1}t + q^{7k_1 + 3k_2 + 3}t^3)\right).
  \end{equation*}
  Let $R^{1/16}$ be the following set of partitions
  \begin{align*}
    &[r, r, r], [r + 1, r, r], [r + 1, r + 1, r], [r + 2, r + 1, r], [r + 2, r + 2, r], &(r \ge 3) \\
    &[r + 2, r, r], &(r \ge 3) \\
    &[r + 3, r + 3, r, r], [r + 4, r + 3, r, r],  [r + 4, r + 3, r + 1, r], [r + 4, r + 4, r + 1, r], &(r \ge 3)\\
    &[r + 6, r + 5, r + 3, r + 1, r], &(r \ge 3) \\
    &[2], [1, 1, 1, 1], [3, 1, 1, 1], [3, 3, 1], [4, 3, 1], [4, 4, 1, 1], [5, 4, 1, 1, 1], [5, 5, 1, 1, 1], \\
    &[6, 5, 3, 1, 1], [6, 6, 3, 1, 1], [7, 6, 4, 1, 1, 1], [8, 7, 5, 3, 1, 1].
  \end{align*}
  Let $P^{1/16}$ be the set of partitions that do not contain any partition in $R^{1/16}$.
  The set
  \begin{equation*}
    \{L_{-\lambda_1}L_{-\lambda_2}\dots L_{-\lambda_m}\vacsixteen \mid \lambda = [\lambda_1, \dots, \lambda_m] \in P^{1/16}\}
  \end{equation*}
  is a vector space basis of $L(1/2, 1/16)$.
\end{theorem}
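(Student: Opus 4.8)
The plan is to prove the two assertions of \Cref{thr:5} in tandem, following the pattern of \Cref{thr:1} and \Cref{thr:2} from \cite{andrews_singular_2022} and of \Cref{thr:3} and \Cref{thr:4}: the spanning statement yields an upper bound for $\ch_{\gr^G(L(1/2,1/16))}$, a Rogers--Ramanujan-type identity rewrites that bound as the double sum in \Cref{thr:5}, and the classical character of $L(1/2,1/16)$ then forces the spanning set to be a basis. By the functoriality recalled in \Cref{sec:introduction}, $\gr^G(L(1/2,1/16))$ is an $h + \mathbb{N}$-graded module over the commutative vertex Poisson algebra $\gr^G(\Vir_{3,4})$, cyclically generated by the symbol $\sigma(\vacsixteen)$ of $\vacsixteen$; concretely it is the quotient of the arc algebra module $\mathbb{C}[\partial^j\sigma(\omega) : j \ge 0]\cdot\sigma(\vacsixteen)$ by the differential ideal generated by (i) the differential ideal cutting out $\gr^G(\Vir_{3,4})$ inside $\mathbb{C}[\partial^j\sigma(\omega) : j \ge 0]$, whose refined character is given by \Cref{thr:1}, and (ii) the symbols of a generating set of the maximal proper submodule of the Virasoro Verma module $M(1/2,1/16)$ --- the lowest of these being the symbol of $\bigl(L_{-2} - \tfrac{4}{3}L_{-1}^2\bigr)\vacsixteen$ at conformal weight $2$. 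Since the $G$-filtration of a highest-weight module counts the number of Virasoro modes $L_{-k}$, $k \ge 1$, applied to the cyclic vector, one has
\[
  \ch_{\gr^G(M)}(t,q) = \sum_{p,n} \dim\bigl(G^pM_n / G^{p-1}M_n\bigr)\, t^p q^n ,
\]
so the refined character records the bidegree (monomial length, conformal weight) and its specialization at $t = 1$ is $\ch_{L(1/2,1/16)}(q)$.

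\emph{Upper bound.} Fix the monomial order on $L$-monomials used in \cite{andrews_singular_2022}. Acting on the generating relations above by $\partial = L_{-1}$ and by the products $\sigma(\omega)_{(n)}$ produces a family of relations in $\gr^G(L(1/2,1/16))$, each bihomogeneous, hence each expressing one $L$-monomial in terms of others of the same length. I would compute the leading monomials of enough of these relations and verify that the resulting initial ideal is generated exactly by the partitions in $R^{1/16}$, that is, that $R^{1/16}$ is a Gröbner basis of the defining ideal; together with a confluence check --- no overlap of two reductions forces a pattern outside $R^{1/16}$ --- this shows that $\{L_{-\lambda_1}\cdots L_{-\lambda_m}\vacsixteen : \lambda \in P^{1/16}\}$ spans $\gr^G(L(1/2,1/16))$. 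Since the reductions are bihomogeneous in the associated graded, on $L(1/2,1/16)$ itself they are non-increasing in length, so the same set spans $L(1/2,1/16)$, and it does so $G$-degree by $G$-degree.

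\emph{Identities and lower bound.} Next I would establish the two $q$-series identities: that $\sum_{\lambda \in P^{1/16}} q^{1/16 + |\lambda|}$ equals the classical (Rocha-Caridi) character of the irreducible Virasoro module $L(1/2,1/16)$, and that $\sum_{\lambda \in P^{1/16}} t^{\len \lambda}\, q^{1/16 + |\lambda|}$ equals the double sum in \Cref{thr:5}. Exactly as for $P^0$ in \cite{andrews_singular_2022, tsuchioka_vertex_2023}, both are of Rogers--Ramanujan/Andrews--Gordon type and follow from a recursion on the smallest part (or a transfer-matrix argument) tracking $|\lambda|$ and $\len \lambda$ at once, the three summands $q^{k_1 + k_2}$, $q^{4k_1 + 2k_2 + 1}t$ and $q^{7k_1 + 3k_2 + 3}t^3$ corresponding to the three admissible tails of a $P^{1/16}$-partition (the empty tail and the tails $[1]$ and $[1,1,1]$, of lengths $0$, $1$ and $3$). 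The first identity forces $\#\{\lambda \in P^{1/16} : 1/16 + |\lambda| = n\} = \dim L(1/2,1/16)_n$ for all $n$; since the monomials above already span, they are a basis of $L(1/2,1/16)$ and, being adapted to $G$, a basis of each $G^pL(1/2,1/16)_n/G^{p-1}L(1/2,1/16)_n$, which with the second identity gives the refined character formula.

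\emph{Main obstacle.} The crux is the Gröbner completeness in the upper-bound step: proving that the finitely many explicit relations coming from the singular vectors of $M(1/2,1/16)$ and of $\Vir_{3,4}$, once closed under $\partial$ and the Poisson-vertex operations, have leading monomials precisely the infinite families listed in $R^{1/16}$ --- missing no forbidden pattern and creating no new one. This is where the precise shape of $R^{1/16}$ matters, in particular its low-degree exceptional members, which are exactly what distinguishes $R^{1/16}$ from $R^{1/2}$ and $R^0$; it is the step most likely to require a delicate, partly computer-assisted induction in $r$ parallel to the vacuum case, while the rest is either formal or a known-type $q$-series manipulation.
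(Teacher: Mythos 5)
Your overall strategy is the same as the paper's: extract relations from the symbols of the singular vectors of $M(1/2,1/16)$ and of the ideal defining $\Vir_{3,4}$ (the latter entering via \Cref{lmm:9} and Wang's theorem), use their leading monomials to show the candidate set spans, prove the Nahm-sum identities by a recursion on the trailing parts as in \Cref{sec:comb-argum}, and then let the known character of $L(1/2,1/16)$ upgrade spanning to a basis. One point where your plan asks for strictly more than is needed: you identify the ``confluence check'' --- that the initial module is generated \emph{exactly} by $R^{1/16}$, with no extra leading monomials created by overlaps --- as the main obstacle. The paper never proves this; it only proves the one containment that spanning requires, namely that every partition containing a pattern of $R^{1/16}$ occurs as a leading monomial of an explicit element of the kernel (\Cref{lmm:10}, via multiplication by power products and the Poisson action $(L_{-2})_{(0)}$, which appends $1$'s --- this is where \Cref{rmk:8}, \Cref{rmk:9} and \Cref{lmm:11} do real work reconciling pattern containment with monomial divisibility, a subtlety your sketch skips). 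The reverse containment, i.e.\ linear independence, comes for free from the dimension count against the Rocha-Caridi character, exactly as in your ``lower bound'' step; so the confluence verification is redundant and should be dropped. Two smaller inaccuracies: the maximal submodule needs both singular vectors $u_2$ and $u_4$, not just the weight-$2$ one; and the three summands $t^0, t^1, t^3$ do not correspond directly to tails of length $0,1,3$ --- the combinatorial decomposition is by $0,1,2,3$ trailing ones (four pieces, cf.\ \Cref{lmm:12} and \Cref{lmm:13}), and the three-term form only emerges after recombining via \Cref{lmm:1}.
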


This article is organized as follows.
In \zcref{sec:prel-notat}, we review Verma modules, the standard filtration, Gröbner bases and consider Nahm-like sums of two variables.
In \zcref{sec:comb-argum}, we compute the series $p(t, q)$ related to $P^{1/2}$ using combinatorial arguments.
In \zcref{sec:comp-lead-monom}, we exhibit the partitions in $P^{1/2}$ using Wang's results in \cite{wang_rationality_1993} and explain how we use software to get the exceptional partitions.
In \zcref{sec:proofs-main-theorems}, we prove \zcref{thr:3} and \zcref{thr:4}.
In the appendices, we show the SageMath \cite{sagemath} program used to compute the exceptional partitions appearing in $R^{1/2}$, and we briefly consider the case $L(1/2, 1/16)$.
The software systems Mathematica \cite{Mathematica} and Singular \cite{Singular} were also very useful to verify the series and compute Gröbner bases.

I would like to thank my advisor Reimundo Heluani and Instituto de Matemática Pura e Aplicada (IMPA) for their support.
Jethro Van Ekeren made some valuable suggestions as well.
The author is partially supported by PhD scholarship 155672/2019-3 from Conselho Nacional de Desenvolvimento Científico e Tecnológico (CNPq).

\section{Preliminaries and notation}
\label{sec:prel-notat}

The set of natural numbers $\{0, 1, \dots\}$ is denoted by $\mathbb{N}$, the set of integers is denoted by $\mathbb{Z}$, and the set of positive integers $\{1, 2, \dots\}$ is denoted by $\mathbb{Z}_+$.

First, we review the theory of representations of the Virasoro Lie algebra following \cite{kac_bombay_2013}.
The \emph{Virasoro Lie algebra} is a Lie algebra given by
\begin{equation*}
  \Vir = \bigoplus_{n \in \mathbb{Z}}\mathbb{C}L_n \oplus \mathbb{C}C.
\end{equation*}
These elements satisfy the following commutation relations:
\begin{align*}
  [L_m, L_n] &= (m - n)L_{m + n} + \delta_{m, -n}\frac{m^3 - m}{12}C \quad \text{for $m, n \in \mathbb{Z}$}, \\
  [\Vir, C] &= 0.
\end{align*}
Let $c, h \in \mathbb{C}$.
We set $\Vir_{\ge 0} = \bigoplus_{n \in \mathbb{N}}\mathbb{C}L_n$.
We make the subalgebra $\Vir_{\ge 0} \oplus \mathbb{C}C$ of $\Vir$ act on $\mathbb{C}$ as follows:
\begin{equation*}
  \text{$L_n1 = 0$ for $n \in \mathbb{Z}_+$, $L_01 = h$ and $C1 = c$}.
\end{equation*}
The \emph{Verma representation} of $\Vir$ with \emph{highest weight} $(c, h)$ is defined as
\begin{equation*}
  M(c, h) = \Ind^{\Vir}_{\Vir_{\ge 0} \oplus \mathbb{C}C}(\mathbb{C}) = U(\Vir) \otimes_{U(\Vir_{\ge 0} \oplus \mathbb{C}C)} \mathbb{C},
\end{equation*}
where $\Vir$ acts by left multiplication.
We take $|c, h\rangle = 1\otimes1$ as \emph{highest weight vector}.

A \emph{partition (of $n \in \mathbb{N}$)} is a sequence $\lambda = [\lambda_1, \dots, \lambda_m]$ such that $\lambda_i \in \mathbb{Z}_+$ for $i = 1, \dots, m$, $\lambda_1 \ge \dots \ge \lambda_m$ (and $\lambda_1 + \dots + \lambda_m = n$).
We also consider the \emph{empty partition} $\emptyset$, which is the unique partition of $0$.
For a partition $\lambda = [\lambda_1, \dots, \lambda_m]$, we define
\begin{equation*}
  L_{\lambda} = L_{-\lambda_1}\dots L_{-\lambda_m} \in U(\Vir).
\end{equation*}
By the PBW theorem, the set
\begin{equation*}
  \{L_{\lambda}|c, h\rangle \mid \text{$\lambda$ is a partition}\}
\end{equation*}
is a vector space basis of $M(c, h)$.
The representation $M(c, h)$ has a unique maximal proper subrepresentation $J(c, h)$, and the quotient
\begin{equation*}
  L(c, h) = M(c, h)/J(c, h)
\end{equation*}
is the \emph{irreducible highest weight representation} of $\Vir$ with highest weight $(c, h)$.
We usually simplify $|c, h\rangle$ to just $|h\rangle$ when $c$ is understood.
In \cite{andrews_singular_2022}, $\Vir_{3, 4} = L(1/2, 0)$ was studied.
In this article, we study $L(1/2, 1/2)$ and $L(1/2, 1/16)$.
These three representations are the only irreducible modules of $\Vir_{3, 4}$ as a vertex algebra (see \cite[Theorem 4.2]{wang_rationality_1993}).

Let $V$ be an $\mathbb{N}$-graded vertex algebra, meaning it has a Hamiltonian $H \in \End(V)$ whose eigenvalues are natural numbers (see \cite[\S1.6]{de_sole_finite_2006}).
Let $(a^i)_{i \in I}$ be a family of homogeneous strong generators of $V$.
For $p \in \mathbb{Z}$, we set
\begin{equation*}
  G^pV = \vspan\{a^{i_1}_{(-n_1 - 1)}\dots a^{i_s}_{(-n_s - 1)}\vac \mid s, n_1, \dots, n_s \in \mathbb{N}, i_1, \dots, i_s \in I, \Delta_{a^{i_1}} + \dots + \Delta_{a^{i_s}} \le p\},
\end{equation*}
where $\Delta$ denotes conformal weight.

\begin{proposition}[{\cite{li_vertex_2004}}]
  \label{prp:1}
  The filtration $(G^pV)_{p \in \mathbb{Z}}$ satisfies:
  \begin{enumerate}
  \item $G^pV = 0$ for $p < 0$;
  \item $\vac \in G^0V \subseteq G^1V \subseteq \dots$;
  \item $V_n \subseteq G^nV$ for $n \in \mathbb{Z}$;
  \item $V = \bigcup_{p \in \mathbb{N}}G^pV$;
  \item $a_{(n)}G^qV \subseteq G^{p + q}V$ for $p, q \in \mathbb{Z}$, $a \in G^pV$ and $n \in \mathbb{Z}$;
  \item $a_{(n)}G^qV \subseteq G^{p + q - 1}V$ for $p, q \in \mathbb{Z}$, $a \in G^pV$ and $n \in \mathbb{N}$;
  \item $H(G^pV) \subseteq G^pV$ and $T(G^pV) \subseteq G^pV$ for $p \in \mathbb{Z}$.
  \end{enumerate}
\end{proposition}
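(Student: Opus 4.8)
The plan is to settle items (1)--(4) and (7) by tracking conformal weights on the spanning monomials $x = a^{i_1}_{(-n_1-1)}\cdots a^{i_s}_{(-n_s-1)}\vac$ that span $G^pV$, and to reduce items (5) and (6) to a single estimate on such monomials, proved by induction on an auxiliary integer parameter. Throughout I use that such an $x$ is homogeneous of conformal weight $\sum_{l}(\Delta_{a^{i_l}}+n_l)$, that $V = \bigoplus_n V_n$ is spanned by such monomials, and (harmlessly normalizing the generating family) that the strong generators have positive conformal weight, so that $G^0V = \mathbb{C}\vac$.

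Items (1)--(4) and (7) are essentially bookkeeping. Since $\sum_l(\Delta_{a^{i_l}}+n_l) \ge \sum_l \Delta_{a^{i_l}} \ge 0$, the defining set of $G^pV$ is empty for $p<0$, giving (1); (2) is immediate. Each $V_n$ is spanned by spanning monomials of conformal weight $n$, and for such a monomial $\sum_l \Delta_{a^{i_l}} \le \sum_l(\Delta_{a^{i_l}}+n_l) = n$, so it lies in $G^nV$; this is (3), and (4) follows from $V = \bigoplus_n V_n$. For (7): $G^pV$ is a span of homogeneous vectors, hence $H$-stable, and $T$ acts on $x$ by the Leibniz rule $Tx = \sum_l (n_l+1)\, a^{i_1}_{(-n_1-1)}\cdots a^{i_l}_{(-n_l-2)}\cdots\vac$, a sum of spanning monomials with the same value of $\sum_l \Delta_{a^{i_l}}$.

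For (5) and (6), by multilinearity it suffices to take $a$ a spanning monomial with invariant $P := \sum_l \Delta_{a^{i_l}}$ and $v$ a spanning monomial of $G^qV$, of invariant $q_0 \le q$, and to prove the estimate that $a_{(n)}v \in G^{P+q_0}V$ for all $n \in \mathbb{Z}$ and $a_{(n)}v \in G^{P+q_0-1}V$ whenever $n \ge 0$. I will prove this by induction on $M := P + q_0$, the base $M = 0$ being trivial thanks to the positive-weight normalization. The ingredients are: (a) the elementary fact that prepending a generator with a strictly negative mode to a spanning monomial of invariant $r$ yields a spanning monomial of invariant $r + \Delta$; (b) the iterate formula, which writes $(c_{(-m-1)}b)_{(n)}v$ as a locally finite sum of terms $c_{(-m-1-j)}b_{(n+j)}v$ and $b_{(n-m-1-j)}c_{(j)}v$ with $j \ge 0$; and (c) the commutator formula $[c_{(j)}, a^{k}_{(-l-1)}] = \sum_{e \ge 0}\binom{j}{e}(c_{(e)}a^{k})_{(j-l-1-e)}$ for generators $c, a^k$ and $j \ge 0$, together with (3) to place $c_{(e)}a^{k} \in G^{\Delta_c + \Delta_{a^k}-1}V$ (it is homogeneous of weight $\le \Delta_c + \Delta_{a^k}-1$). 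In the inductive step, write $a = c_{(-m-1)}b$ with $c = a^{i_1}$ of weight $\Delta_c$ and $b$ a spanning monomial of invariant $P - \Delta_c$. The term $c_{(-m-1-j)}b_{(n+j)}v$ is handled by applying the inductive hypothesis to $b$ acting on $v$ (with parameter $(P-\Delta_c)+q_0 = M - \Delta_c < M$) and then (a). For the reordered term $b_{(n-m-1-j)}c_{(j)}v$ one first shows $c_{(j)}v \in G^{q_0+\Delta_c-1}V$ (a short argument: peel off the leftmost mode of $v$, use (c) and (3) on the resulting commutator, and invoke the inductive hypothesis), and then applies the inductive hypothesis to $b$ acting on $G^{q_0+\Delta_c-1}V$, legitimate because $(P-\Delta_c)+(q_0+\Delta_c-1) = M-1 < M$. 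The residual case, $a$ of length one so that $a_{(n)}$ is a scalar multiple of $c_{(n-m)}$, follows from (a) when $n < m$ (the scalar vanishes for $0 \le n < m$) and from the same $G^{q_0+\Delta_c-1}V$-claim when $n \ge m$. Unwinding, the estimate holds for all spanning monomials, and then (5), (6) follow by linearity.

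The main obstacle is the choice of induction parameter. Inducting on $q$ alone, or on the conformal weight of $v$, does not close: in the reordered term $b_{(n-m-1-j)}c_{(j)}v$ of the iterate formula the intermediate vector $c_{(j)}v$ lands in $G^{q_0+\Delta_c-1}V$ with $\Delta_c \ge 2$ possible and may have strictly larger conformal weight than $v$, so neither of those quantities decreases. The combined parameter $M = P + q_0$ is designed precisely so that this term---coupling $b$, of invariant $P - \Delta_c$, with a vector in $G^{q_0+\Delta_c-1}V$---still strictly decreases $M$, which is what makes the induction go through; everything else is routine but somewhat lengthy manipulation of the Borcherds identity.
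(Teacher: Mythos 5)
Your argument is correct, and it is worth noting that the paper itself offers no proof of this proposition at all --- it is quoted from \cite{li_vertex_2004}, so the only meaningful comparison is with Li's original argument, which yours essentially reproduces: items (i)--(iv) and (vii) by weight bookkeeping on spanning monomials, and (v)--(vi) by induction with the iterate and commutator formulas as the engine. Your choice of induction parameter $M = P + q_0$ is exactly the device that makes the reordered term $b_{(n-m-1-j)}c_{(j)}v$ tractable, and your handling of the length-one case (where the pair $(c,v)$ has parameter exactly $M$, forcing the separate ``peel off the leftmost mode of $v$'' argument rather than a direct appeal to the inductive hypothesis) is the one genuinely delicate point, and you get it right. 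The single caveat is your normalization that all strong generators have positive conformal weight: this is what makes $M$ strictly decrease and the base case trivial, so the argument as written does not cover $\mathbb{N}$-graded vertex algebras with $V_0 \neq \mathbb{C}\vac$ (Li's formulation, which spans over all homogeneous vectors rather than a generating family, is slightly more general). For the purposes of this paper, where $V = \Vir^{1/2}$ is generated by the single weight-$2$ vector $\omega$, the normalization is indeed harmless.
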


Let
\begin{equation*}
  \gr^G(V) = \bigoplus_{p \in \mathbb{N}}G^pV/G^{p - 1}V
\end{equation*}
be the associated graded vector space.
By \cite{li_vertex_2004}, the vector space $\gr^G(V)$ is a vertex Poisson algebra with operations given as follows.
For $p, q \in \mathbb{N}$, $a \in G^pV$ and $b \in G^qV$, we set:
\begin{align*}
  \alpha^p(a)\alpha^q(b) &= \alpha^{p + q}(a_{(-1)}b), \\
  T\alpha^p(a) &= \alpha^p(Ta), \\
  Y_-(\alpha^p(a), z)\alpha^q(b) &= \sum_{n \in \mathbb{N}}\alpha^{p + q - 1}(a_{(n)}b)z^{-n - 1},
\end{align*}
where $\alpha^p: G^pV \to \gr^G(V)$ is the \emph{principal symbol map}.
The unit is $\alpha^0(\vac)$.
The filtration $(G^pV)_{p \in \mathbb{Z}}$ is called the \emph{standard filtration of $V$}.

Let $(V, \omega)$ be an $\mathbb{N}$-graded conformal vertex algebra, let $(a^i)_{i \in I}$ be a family of homogeneous strong generators of $V$, and let $M$ be an $h + \mathbb{N}$-graded $(V, \omega)$-module.
As usual, we write $Y^M(\omega, z) = \sum_{n \in \mathbb{Z}}L^M_nz^{-n - 2}$.
This means $M$ is a $V$-module with $L_0^M$ diagonalizable whose eigenvalues are in the set $h + \mathbb{N}$ for some $h \in \mathbb{C}$.
We set $M_{\Delta} = \ker(L^M_0 - \Delta\Id_M)$ for $\Delta \in \mathbb{C}$, so we have $M = \bigoplus_{n \in \mathbb{N}}M_{h + n}$.
For $p \in \mathbb{Z}$, we set
\begin{equation*}
  \begin{split}
    G^pM = \vspan\{a^{i_1M}_{(-n_1 - 1)}\dots a^{i_sM}_{(-n_s - 1)}u &\mid \text{$s, n_1, \dots, n_s \in \mathbb{N}$, $i_1, \dots, i_s \in I$, $u \in M$ homogeneous,} \\
                                                                     &\quad \Delta_{a^{i_1}} + \dots + \Delta_{a^{i_s}} + \Delta_u - h \le p\}.
  \end{split}
\end{equation*}

\begin{proposition}
  \label{prp:2}
  The filtration $(G^pM)_{p \in \mathbb{Z}}$ satisfies:
  \begin{enumerate}
  \item $G^pM = 0$ for $p < 0$;
  \item $G^0M \subseteq G^1M \subseteq \dots$;
  \item $M = \bigcup_{p \in \mathbb{N}}G^pM$;
  \item $M_{h + n} \subseteq G^nM$ for $n \in \mathbb{Z}$;
  \item $a^M_{(n)}G^qM \subseteq G^{p + q}M$ for $p, q \in \mathbb{Z}$, $a \in G^pV$ and $n \in \mathbb{Z}$;
  \item $a^M_{(n)}G^qM \subseteq G^{p + q - 1}M$ for $p, q \in \mathbb{Z}$, $a \in G^pV$ and $n \in \mathbb{N}$;
  \item $L^M_0(G^pM) \subseteq G^pM$ and $L^M_{-1}(G^pM) \subseteq G^{p + 1}M$ for $p \in \mathbb{Z}$.
  \end{enumerate}
\end{proposition}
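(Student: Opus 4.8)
The plan is to mirror Li's proof of Proposition 0.1 (for the vertex algebra $V$ itself), transporting each statement to the module setting by carrying along the extra datum of the ground-state shift $h$. First I would reduce to working with the spanning set of monomials $a^{i_1M}_{(-n_1-1)}\cdots a^{i_sM}_{(-n_s-1)}u$ with $u$ ranging over a homogeneous basis of $M$; since strong generation of $V$ by the $a^i$ implies that such monomials span $M$ (apply the strong-generation statement inside $V$ to the module action, using that $Y^M(a_{(-1)}b,z)$ is expressible via normally ordered products of $Y^M(a,z)$ and $Y^M(b,z)$), the subspaces $G^pM$ are well defined and exhaust $M$, giving (iv). Item (i) is immediate because the conformal-weight sum $\Delta_{a^{i_1}}+\cdots+\Delta_{a^{i_s}}+\Delta_u-h$ is a sum of nonnegative quantities (the $\Delta_{a^{i_j}}\in\mathbb N$ since $V$ is $\mathbb N$-graded, and $\Delta_u-h\in\mathbb N$ since $M$ is $h+\mathbb N$-graded), so a negative bound $p$ forces the empty span. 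Item (ii) is trivial from the definition, and item (iii) follows by taking $s=0$: a homogeneous $u\in M_{h+n}$ lies in $G^nM$, hence all of $M_{h+n}$ does.

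The substantive points are (v), (vi) and (vii), and these I would prove exactly as in the vertex-algebra case using the $n$-th product identities. For (v): given $a\in G^pV$, write $a$ (mod lower filtration, or exactly) as a combination of iterated $(-n-1)$-products of the generators with total conformal weight $\le p$; the Borcherds/associativity identities let one rewrite $a^M_{(n)}\big(a^{i_1M}_{(-n_1-1)}\cdots u\big)$ as a sum of monomials in the generators acting on $M$, and the key bookkeeping is that conformal weights add: $\mathrm{wt}(a_{(n)}) = \Delta_a - n - 1$, so acting by $a_{(n)}$ changes the weight of a vector by $\Delta_a - n - 1$, and one checks the resulting monomials all have generator-weight-plus-($\Delta$-of-ground-piece)-minus-$h$ bounded by $p+q$. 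For (vi), the point is that when $n\in\mathbb N$ the product $a_{(n)}b$ is a strictly "more singular" combination: in $\gr$ the singular part of $Y$ drops the filtration by one, precisely the content of the vertex-Poisson $\lambda$-bracket landing in degree $p+q-1$; concretely one shows any monomial produced by $a^M_{(n)}(\cdots)$ with $n\ge 0$ can be rewritten with total weight $\le p+q-1$ because a positive mode $a_{(n)}$, $n\ge 0$, can be "absorbed" using commutator identities that trade one generator-factor for factors of strictly smaller total weight. Finally (vii): $L^M_0$ preserves each weight space $M_{h+n}$ and commutes with the weight grading, so it preserves $G^pM$; and $L^M_{-1}$ raises conformal weight by $1$ (it is the translation operator $T^M$, with $[L^M_{-1}, a^M_{(n)}] = -n\, a^M_{(n-1)}$ and $L^M_{-1}u$ has weight $\Delta_u+1$), so it sends $G^pM$ into $G^{p+1}M$.

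I expect the main obstacle to be (vi)—the filtration-drop for nonnegative modes—since it requires the careful induction on conformal weight (or on the length of the monomial) using the commutator formula to show that every term arising from $a^M_{(n)}$, $n\ge0$, can be re-expressed within $G^{p+q-1}M$ rather than merely $G^{p+q}M$. This is the same difficulty as in Li's original argument, and I would handle it by the same device: induct on $p$, treat $a$ a single generator first (where one uses the explicit commutator $[a^i_{(n)}, a^j_{(-m-1)}]$ expanded via the operator product expansion, whose coefficients are the $a^i_{(k)}a^j$ with $k\ge 0$, each of strictly smaller conformal weight $\Delta_{a^i}+\Delta_{a^j} - k - 1 < \Delta_{a^i}+\Delta_{a^j}$), then bootstrap to general $a\in G^pV$ by writing it in terms of such generators and invoking (v) together with the inductive hypothesis. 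Everything else is routine once the weight-counting conventions are fixed, and the functoriality claim (that $\gr^G(M)$ is a $\gr^G(V)$-module with the stated operations) then follows formally from (v), (vi) and the vertex-Poisson structure on $\gr^G(V)$, exactly as in \cite{li_abelianizing_2005} for the decreasing filtration.
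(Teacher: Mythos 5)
Your proposal is correct and takes essentially the same approach as the paper: the paper's entire proof of this proposition is the single sentence that the arguments of \cite{li_vertex_2004} carry over to the module setting, and your sketch is just a more detailed transcription of exactly that adaptation, with the right weight bookkeeping for (v)--(vii) (in particular, deducing $L^M_{-1}(G^pM)\subseteq G^{p+1}M$ from $\omega\in G^2V$ and item (vi) is the intended route).
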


\begin{proof}
  The proofs in \cite{li_vertex_2004} also work here.
\end{proof}

\begin{remark}
  \label{rmk:1}
  We do not have the property $L_{-1}^M(G^pM) \subseteq G^pM$ as in the case of vertex algebras.
\end{remark}

Let
\begin{equation*}
  \gr^G(M) = \bigoplus_{p \in \mathbb{N}}G^pM/G^{p - 1}M
\end{equation*}
be the associated graded vector space.
The vector space $\gr^G(M)$ is a module over $\gr^G(V)$ with operations given as follows.
For $p, q \in \mathbb{N}$, $a \in G^pV$ and $u \in G^qM$, we set:
\begin{align*}
  \alpha^p(a)\alpha_M^q(u) &= \alpha_M^{p + q}(a^M_{(-1)}u), \\
  Y^M_-(\alpha^p(a), z)\alpha_M^q(u) &= \sum_{n \in \mathbb{N}}\alpha_M^{p + q - 1}(a^M_{(n)}u)z^{-n - 1},
\end{align*}
where $\alpha_M^p: G^pM \to \gr^G(M)$ is the principal symbol map.
The filtration $(G^pM)_{p \in \mathbb{Z}}$ is called the standard filtration of $M$.

By \zcref{prp:2}(vii), we can define a diagonalizable operator $H^M \in \End(\gr^G(M))$ by setting $H^M(\alpha^p_M(u)) = \alpha^p_M(L^M_0u)$ for $p \in \mathbb{N}$ and $u \in G^pM$.

For $p, n \in \mathbb{N}$, we set $G^pM_{h + n} = G^pM \cap M_{h + n}$.
Then
\begin{equation}
  \label{eq:1}
  \gr^G(M) = \bigoplus_{n \in \mathbb{N}}\gr^G(M)_{h + n},
\end{equation}
where $\gr^G(M)_{h + n} = \bigoplus_{p \in \mathbb{N}}\alpha^p_M(G^pM_{h + n})$ and $\gr^G(M)_{h + n} = \ker(H^M - (h + n)\Id_{\gr^G(M)})$.
Besides the grading \eqref{eq:1}, we have the refined grading
\begin{equation}
  \label{eq:2}
  \gr^G(M) = \bigoplus_{p, n \in \mathbb{N}}\alpha^p_M(G^pM_{h + n}).
\end{equation}
We assume that for $n \in \mathbb{N}$, $M_{h + n}$ is finite dimensional.
We define the \emph{character} and the \emph{refined character of $\gr^G(M)$} (with respect to grading \eqref{eq:1} and grading \eqref{eq:2}) as:
\begin{align*}
  \ch_{\gr^G(M)}(q) &= \sum_{n \in \mathbb{N}}\dim(\gr^G(M)_{h + n})q^{h + n} \in q^h\mathbb{C}[[q]], \\
  \ch_{\gr^G(M)}(t, q) &= \sum_{p, n \in \mathbb{N}}\dim(\alpha^p_M(G^pM_{h + n}))t^pq^{h + n} \in q^{h}\mathbb{C}[[t, q]].
\end{align*}
By \zcref{prp:2}, we have
\begin{equation*}
  \ch_M(q) = \ch_{\gr^G(M)}(q) = \ch_{\gr^G(M)}(1, q).
\end{equation*}

If $f: M_1 \to M_2$ is a homomorphism of $h + \mathbb{N}$-graded $(V, \omega)$-modules, then
\begin{align*}
  \gr^G(f): \gr^G(M_1) &\to \gr^G(M_2), \\
  \gr^G(f)(\alpha^p_{M_1}(u)) &= \alpha^p_{M_2}(f(u)) \quad \text{for $p \in \mathbb{N}$ and $u \in G^pM_1$}
\end{align*}
defines a homomorphism of $h + \mathbb{N}$-graded $\gr^G(V)$-modules.
Therefore, we obtain a functor
\begin{equation*}
  \gr^G: \{\text{$h + \mathbb{N}$-graded $(V, \omega)$-modules}\} \to \{\text{$h + \mathbb{N}$-graded $\gr^G(V)$-modules}\}.
\end{equation*}

We now state a general linear algebra proposition about bases of increasing filtrations of vector spaces.
The proposition applies in particular to the standard filtration $(G^pM)_{p \in \mathbb{Z}}$ of a module $M$ over a vertex algebra, and the proof is straightforward.

\begin{proposition}
  \label{prp:3}
  Let $M$ be a vector space, let $(M^p)_{p \in \mathbb{Z}}$ be a family of subspaces of $M$ satisfying properties \emph{(i)--(iii)} of \zcref{prp:2}, and let $(B^p)_{p \in \mathbb{N}}$ be an increasing family of sets satisfying $B^p \subseteq M^p$ for $p \in \mathbb{N}$.
  Then $\{v + M^{p - 1} \mid v \in B^p \setminus B^{p - 1}\}$ spans (resp.\ is a basis of) $M^p/M^{p - 1}$ for $p \in \mathbb{N}$ if and only if $B^p$ spans (resp.\ is a basis of) $M^p$ for $p \in \mathbb{N}$ ($B^{-1} = \emptyset$).
  In particular, if $\{v + M^{p - 1} \mid p \in \mathbb{N}, v \in B^p \setminus B^{p - 1}\}$ is a basis of $\gr(M) = \bigoplus_{p \in \mathbb{N}}M^p/M^{p - 1}$, then $\bigcup_{p \in \mathbb{N}}B^p$ is a basis of $M$.\footnote{The version published in \cite{salazar_pbw_2024} has a minor mistake in Proposition 1.8 because it presents an explicit isomorphism $L \xrightarrow{\sim} \gr^G(L), L_{\lambda}(\vachalf + W) \mapsto \alpha^{\len(\lambda)}(L_{\lambda}(\vachalf + W))$ which is not well-defined.
    For example, using the notation in \zcref{sec:prel-notat} and \zcref{sec:an-expl-descr} ahead, $L_{-1}^3 = 0$ in $\gr^G(L)$ but $L_{-1}^3\vachalf \notin W$.
    This proposition fixes that mistake.
    Ultimately, we only use that $\dim(L_{h + n}) = \dim(\gr^G(L)_{h + n})$ for $n \in \mathbb{N}$, so the remaining results are not affected.}
\end{proposition}

\begin{proposition}
  \label{prp:4}
  Let $(V, \omega)$ be an $\mathbb{N}$-graded conformal vertex algebra, and let $M$ be an $h + \mathbb{N}$-graded $(V, \omega)$-module.
  Then the Li filtration (see \cite{li_abelianizing_2005}) and the standard filtration satisfy
  \begin{equation*}
    F_pM_{h + n} = G^{n - p}M_{h + n} \quad \text{for $p, n \in \mathbb{N}$},
  \end{equation*}
  which implies $\gr_F(M)$ and $\gr^G(M)$ are isomorphic as modules.
\end{proposition}

\begin{proof}
  The proof is essentially the same as the proof of \cite[Proposition 2.6.1]{arakawa_remark_2012}.
\end{proof}

\begin{remark}
  \label{rmk:2}
  The definition of the standard filtration $(G^pM)_{p \in \mathbb{Z}}$ apparently depends on $h$, so we should write it as $(G^p_hM)_{p \in \mathbb{Z}}$.
  However, since $G^p_{h - 1}M = G^{p - 1}_hM$ for $p \in \mathbb{Z}$, $\gr^G_h(M)$ and $\gr^G_{h - 1}(M)$ are isomorphic and $\ch_{\gr^G_{h - 1}(M)}(t, q) = t\ch_{\gr^G_h(M)}(t, q)$.
  If we further require that $M_h \neq 0$ when $M \neq 0$, then $\gr^G(M)$ and $\ch_{\gr^G(M)}(t, q)$ are well-defined.
\end{remark}

\begin{remark}
  \label{rmk:3}
  We can relax the hypothesis of $L_0^M$ being diagonalizable.
  It is possible to define the standard filtration for admissible modules (see \cite[Definition 3.3]{dong_twisted_1998}), and \zcref{prp:4} still holds.
\end{remark}

From now on, we take $V = \Vir^{1/2}$ (the universal Virasoro vertex algebra of central charge $1/2$) as the $\mathbb{N}$-graded vertex algebra with conformal vector $\omega = L_{-2}\vac$.
Some subscripts or superscripts will be omitted, so for example $\alpha^p_M$ simplifies to $\alpha^p$.
We set:
\begin{equation*}
  M = M(1/2, 1/2), L = L(1/2, 1/2), W = J(1/2, 1/2).
\end{equation*}
The representations $M$ and $L$ can be considered as modules over $V$ because they are smooth representations of $\Vir$ of central charge $1/2$ (cf.\ \cite[Theorem 6.1.7]{lepowsky_introduction_2004}).
By definition, we have $L = M/W$.
It can be shown (cf.\ \cite{astashkevich_structure_1997}) that
\begin{equation}
  \label{eq:3}
  W = U(\Vir)\{u_2, u_3\},
\end{equation}
where the generating singular vectors are:
\begin{equation}
  \label{eq:4}
  u_2 = (L_{-1}^2 - \tfrac{4}{3}L_{-2})\vachalf, u_3 = (L_{-1}^3 - 3L_{-2}L_{-1} + \tfrac{3}{4}L_{-3})\vachalf.
\end{equation}

We have a natural epimorphism of representations of $\Vir$
\begin{align*}
  \pi: M &\twoheadrightarrow L, \\
  \pi(u) &= u + W.
\end{align*}
We can consider $\pi$ as a homomorphism of modules over $V$.
Applying the functor $\gr^G$, we obtain an epimorphism of modules over $\gr^G(V)$
\begin{equation*}
  \gr^G(\pi): \gr^G(M) \twoheadrightarrow \gr^G(L),
\end{equation*}
and this produces a natural isomorphism of modules over $\gr^G(V)$
\begin{equation*}
  \gr^G(M)/K \xrightarrow{\sim} \gr^G(L),
\end{equation*}
where
\begin{equation}
  \label{eq:5}
  K = \ker(\gr^G(\pi)).
\end{equation}

Let us describe these objects and the isomorphisms explicitly.
For $V$, $\gr^G(V)$ is isomorphic to $\mathbb{C}[L_{-2}, L_{-3}, \dots]$ as a differential commutative associative algebra with the derivation given by
\begin{align*}
  \partial: \mathbb{C}[L_{-2}, L_{-3}, \dots] &\to \mathbb{C}[L_{-2}, L_{-3}, \dots], \\
  \partial(L_{-n}) &= (n - 1)L_{-n - 1} \quad \text{for $n \ge 2$}.
\end{align*}
The vertex Poisson structure (i.e., the map $Y_-$) is trivial, meaning it is $0$.
This is because
\begin{equation*}
  \gr^G(V) \cong \gr(U(\Vir_{\le -2})) \cong S(\Vir_{\le -2}) \cong \mathbb{C}[L_{-2}, L_{-3}, \dots],
\end{equation*}
where $\Vir_{\le -2}$ is the subalgebra of $\Vir$ given by $\Vir_{\le -2} = \bigoplus_{n \le -2}\mathbb{C}L_n$, and $U(\Vir_{\le -2})$ is equipped with the PBW filtration (see \cite[\S2]{dixmier_enveloping_1996} and \cite[(5.2.25)]{lepowsky_introduction_2004}).
A similar argument can be made for $\gr^G(M)$, obtaining the following isomorphism
\begin{align*}
  \gr^G(M) &\xrightarrow{\sim} \bigoplus_{k \in \mathbb{N}}\mathbb{C}[L_{-2}, L_{-3}, \dots]L_{-1}^k, \\
  \alpha_M^{2s + k}(L_{-n_1 - 2}^M\dots L_{-n_s - 2}^M(L_{-1}^M)^k\vachalf) &\mapsto L_{-n_1 - 2}\dots L_{-n_s - 2}L_{-1}^k,
\end{align*}
where $s, k, n_1, \dots, n_s \in \mathbb{N}$.

\begin{remark}
  \label{rmk:4}
  That $+k$ in the isomorphism above is what makes this filtration different from the PBW filtration, where all $L_n$ for $n \le -1$ have the same length.
  On the other hand, with the standard filtration, $L_{-1}$ has length equal to $1$, while $L_{-2}$, $L_{-3}$, $\dots$ have length equal to $2$.
\end{remark}

These observations justify the following definitions.
For a partition $\lambda = [\lambda_1, \dots, \lambda_m, 1, \dots, 1]$ with exactly $n$ ones, we define $u_{\lambda} \in \bigoplus_{k \in \mathbb{N}}\mathbb{C}[L_{-2}, L_{-3}, \dots]L_{-1}^k$ by setting
\begin{equation*}
  u_{\lambda} = L_{-\lambda_1}\dots L_{-\lambda_m}L_{-1}^n.
\end{equation*}
We define the \emph{length of $\lambda$} as
\begin{equation*}
  \len(\lambda) = 2m + n
\end{equation*}
and the \emph{weight of $\lambda$} as
\begin{equation*}
  \Delta(\lambda) = \lambda_1 + \dots + \lambda_m + n.
\end{equation*}
Therefore, for a partition $\lambda$, we have
\begin{equation*}
  L_{\lambda}\vachalf \in G^{\len(\lambda)}M_{1/2 + \Delta(\lambda)}.
\end{equation*}
For a partition $\lambda = [\lambda_1, \dots, \lambda_m]$ with $\lambda_m \ge 2$, we define $p_{\lambda} \in \mathbb{C}[L_{-2}, L_{-3}, \dots]$ by setting
\begin{equation*}
  p_{\lambda} = L_{-\lambda_1}\dots L_{-\lambda_m}.
\end{equation*}

In this article, we will deal with polynomial algebras written as $\mathbb{C}[L_{-2}, L_{-3}, \dots, L_{-N}]$ and free modules of the form $\bigoplus_{n \le N}\mathbb{C}[L_{-2}, L_{-3}, \dots, L_{-N}]L_{-1}^n$ for some $N \in \mathbb{N}$.
We will always use the degree reverse lexicographic order with $L_{-2} > L_{-3} > \dots > L_{-N}$ and TOP (term over position) with $L_{-1}^0 < L_{-1}^1 < \dots < L_{-1}^N$, see \cite[\S3.5]{adams_introduction_1994}.
Let $u \in \bigoplus_{k \in \mathbb{N}}\mathbb{C}[L_{-2}, L_{-3}, \dots]L_{-1}^k$.
We can define the \emph{leading monomial of $u$}, denoted by $\lm(u)$, as follows.
We pick $N$ large enough so that $u \in \bigoplus_{n \le N}\mathbb{C}[L_{-2}, L_{-3}, \dots, L_{-N}]L_{-1}^n$.
Then, we define $\lm(u)$ as the leading monomial in $\bigoplus_{n \le N}\mathbb{C}[L_{-2}, L_{-3}, \dots, L_{-N}]L_{-1}^n$, which is naturally a subset of $\bigoplus_{k \in \mathbb{N}}\mathbb{C}[L_{-2}, L_{-3}, \dots]L_{-1}^k$.
This does not depend on the choice of $N$.

When working with $q$-series, the following notation is useful.
The \emph{$q$-Pochhammer symbol} is $(q)_n = \prod_{j = 1}^n(1 - q^j) \in \mathbb{C}[q]$ for $n \in \mathbb{N}$.
The recursive proof of the series identities in \cite{andrews_singular_2022} suggests the following definition.
For $a, b, c, d \in \mathbb{N}$, we define
\begin{equation*}
  f_{a, b, c, d}(t, q) = \sum_{k_1, k_2 \in \mathbb{N}}t^{4k_1 + 2k_2 + d}\frac{q^{4k_1^2 + 3k_1k_2 + k_2^2 + ak_1 + bk_2 + c}}{(q)_{k_1}(q)_{k_2}} \in \mathbb{C}[[t, q]].
\end{equation*}
The following lemma will be used frequently to verify series identities.

\begin{lemma}
  \label{lmm:1}
  The series $f_{a, b, c, d}(t, q)$ satisfies:
  \begin{enumerate}
  \item $t^mq^nf_{a, b, c, d}(t, q) = f_{a, b, c + n, d + m}(t, q)$ for $m, n \in \mathbb{N}$;
  \item $f_{a, b, c, d}(tq^n, q) = f_{a + 4n, b + 2n, c + dn, d}(t, q)$ for $n \in \frac{1}{2}\mathbb{N}$ and $d \in 2\mathbb{N}$;
  \item $f_{a, b, c, d}(t, q) - f_{a + n, b, c, d}(t, q) = \sum_{k = 0}^{n - 1}f_{a + 8 + k, b + 3, a + c + 4 + k, d + 4}(t, q)$ for $n \in \mathbb{Z}_+$;
  \item $f_{a, b, c, d}(t, q) - f_{a, b + n, c, d}(t, q) = \sum_{k = 0}^{n - 1}f_{a + 3, b + 2 + k, b + c + 1 + k, d + 2}(t, q)$ for $n \in \mathbb{Z}_+$.
  \end{enumerate}
\end{lemma}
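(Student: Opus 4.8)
The plan is to verify each of the four identities by direct manipulation of the defining double sum, treating (i) and (ii) as essentially formal, and (iii) and (iv) as telescoping sums that reduce to the $n=1$ case by an elementary induction.

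For part (i), I would simply observe that multiplying by $t^mq^n$ shifts the exponent of $t$ by $m$ (i.e.\ replaces $d$ by $d+m$ since $d$ only appears additively in the exponent $4k_1+2k_2+d$) and the exponent of $q$ by $n$ (i.e.\ replaces $c$ by $c+n$), with no other change; this is immediate from the definition. For part (ii), substituting $t \mapsto tq^n$ into $t^{4k_1+2k_2+d}$ produces an extra factor $q^{n(4k_1+2k_2+d)} = q^{4nk_1 + 2nk_2 + nd}$; absorbing this into the $q$-exponent replaces $a$ by $a+4n$, $b$ by $b+2n$, and $c$ by $c+dn$, which is exactly the claimed identity. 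The hypotheses $n \in \tfrac12\mathbb{N}$, $d \in 2\mathbb{N}$ are there precisely to guarantee that $a+4n$, $b+2n$, $c+dn$ remain in $\mathbb{N}$, so the right-hand side is again a legitimate $f$.

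For parts (iii) and (iv), the key step is the base case $n=1$, after which the general statement follows by a one-line telescoping induction: writing $f_{a,b,c,d} - f_{a+n,b,c,d} = \sum_{j=0}^{n-1}\bigl(f_{a+j,b,c,d} - f_{a+j+1,b,c,d}\bigr)$ and applying the $n=1$ case to each summand (with $a$ replaced by $a+j$) yields the stated sum $\sum_{k=0}^{n-1} f_{a+8+k,\,b+3,\,a+c+4+k,\,d+4}$. So the real content is: $f_{a,b,c,d} - f_{a+1,b,c,d} = f_{a+8,b+3,a+c+4,d+4}$. Here $f_{a,b,c,d} - f_{a+1,b,c,d}$ has summand $t^{4k_1+2k_2+d}\frac{q^{Q + ak_1 + bk_2 + c}}{(q)_{k_1}(q)_{k_2}}(1 - q^{k_1})$, where $Q = 4k_1^2+3k_1k_2+k_2^2$. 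The factor $1-q^{k_1}$ kills the $k_1=0$ term and, for $k_1 \ge 1$, combines with $\tfrac{1}{(q)_{k_1}}$ to give $\tfrac{1}{(q)_{k_1-1}}$. Reindexing $k_1 \mapsto k_1+1$ then requires checking that the exponents transform correctly: one must verify $Q(k_1+1,k_2) + a(k_1+1) + b k_2 + c$ together with the $t$-shift $4(k_1+1)+2k_2+d$ matches $4k_1^2 + 3k_1k_2 + k_2^2 + (a+8)k_1 + (b+3)k_2 + (a+c+4)$ and $t$-exponent $4k_1+2k_2+(d+4)$. Since $Q(k_1+1,k_2) = Q(k_1,k_2) + 8k_1 + 3k_2 + 4$, this is a routine but satisfying match. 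Part (iv) is entirely analogous, using the factor $1-q^{k_2}$, $\tfrac{1}{(q)_{k_2}} \to \tfrac{1}{(q)_{k_2-1}}$, the reindexing $k_2 \mapsto k_2+1$, and the identity $Q(k_1,k_2+1) = Q(k_1,k_2) + 3k_1 + 2k_2 + 1$.

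I do not anticipate a genuine obstacle here — this lemma is bookkeeping — but the one place to be careful is making sure that all the parameter shifts land inside $\mathbb{N}$ so that each $f$ on the right-hand side is well defined as an element of $\mathbb{C}[[t,q]]$; in (iii) and (iv) this is automatic since all shifts are by nonnegative amounts, and in (ii) it is exactly what the stated hypotheses on $n$ and $d$ ensure. The only mild subtlety worth stating explicitly is that in (iii)–(iv) the reindexing is valid termwise because, for fixed total $q$-degree, only finitely many $(k_1,k_2)$ contribute, so there are no convergence issues in $\mathbb{C}[[t,q]]$.
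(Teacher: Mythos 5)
Your proof is correct and matches the paper's argument: parts (i) and (ii) are dismissed as immediate, and the heart of (iii)--(iv) is exactly the paper's ``key step'' of reindexing $k_1 \mapsto k_1+1$ (resp.\ $k_2 \mapsto k_2+1$) after absorbing $(1-q^{k_1})$ into the Pochhammer symbol. The only cosmetic difference is that you reduce general $n$ to $n=1$ by telescoping, whereas the paper handles all $n$ at once via the factorization $1-q^{nk_1}=(1-q^{k_1})\sum_{k=0}^{n-1}q^{kk_1}$; both yield the same sum.
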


\begin{proof}\leavevmode
  \begin{enumerate}
  \item Clear.
  \item Clear.
  \item The key step is replacing $k_1$ by $k_1 + 1$ in the following computation
    \begin{align*}
      f_{a, b, c, d}(t, q) - f_{a + n, b, c, d}(t, q) &= \sum_{k_1, k_2 \in \mathbb{N}}t^{4k_1 + 2k_2 + d}\frac{q^{4k_1^2 + 3k_1k_2 + k_2^2 + ak_1 + bk_2 + c}}{(q)_{k_1}(q)_{k_2}}(1 - q^{nk_1}) \\
                                                      &= \sum_{k_1, k_2 \in \mathbb{N}}t^{4k_1 + 2k_2 + d}\frac{q^{4k_1^2 + 3k_1k_2 + k_2^2 + ak_1 + bk_2 + c}}{(q)_{k_1}(q)_{k_2}}(1 - q^{k_1})\sum_{k = 0}^{n - 1}q^{kk_1} \\
                                                      &= \sum_{k = 0}^{n - 1}\sum_{k_1, k_2 \in \mathbb{N}}t^{4k_1 + 2k_2 + d}\frac{q^{4k_1^2 + 3k_1k_2 + k_2^2 + (a + k)k_1 + bk_2 + c}}{(q)_{k_1}(q)_{k_2}}(1 - q^{k_1}) \\
                                                      &= \sum_{k = 0}^{n - 1}\sum_{k_1, k_2 \in \mathbb{N}}t^{4k_1 + 2k_2 + d + 4}\frac{q^{4k_1^2 + 3k_1k_2 + k_2^2 + (a + 8 + k)k_1 + (b + 3)k_2 + a + c + 4 + k}}{(q)_{k_1}(q)_{k_2}} \\
                                                      &= \sum_{k = 0}^{n - 1}f_{a + 8 + k, b + 3, a + c + 4 + k, d + 4}(t, q).
    \end{align*}
  \item Same trick as (iii) but with $k_2$ instead of $k_1$. \qedhere
  \end{enumerate}
\end{proof}

\section{A combinatorial argument}
\label{sec:comb-argum}

A partition $\lambda = [\lambda_1, \dots, \lambda_m]$ \emph{contains a partition} $\eta = [\eta_1, \dots, \eta_n]$, written as $\eta \subseteq \lambda$, if $m \ge n$ and there is $i \in \mathbb{Z}_+$ such that $1 \le i \le m - n + 1$ and $[\lambda_i, \lambda_{i + 1}, \dots, \lambda_{i + n - 1}] = \eta$.

We define
\begin{equation*}
  p(t, q) = \sum_{\lambda \in P}t^{\len(\lambda)}q^{\Delta(\lambda)} \in \mathbb{C}[[t, q]],
\end{equation*}
where $P$ is the set of partitions that do not contain any partition in $R$ as defined in \zcref{sec:introduction}, i.e.,
\begin{equation*}
  P = \{\lambda \mid \text{for $\eta \in R$, $\lambda \nsupseteq \eta$}\}.
\end{equation*}
We call the last eight partitions of $R$ \emph{exceptional partitions}, and the others involving $r$ are called \emph{ordinary partitions}.
For $m, n \in \mathbb{N}$, we set:
\begin{align*}
  P(n) &= \{\lambda \in P \mid \Delta(\lambda) = n\}, \\
  p(q) &= \sum_{n \in \mathbb{N}}|P(n)|q^n \in \mathbb{C}[[q]], \\
  P(n, m) &= \{\lambda \in P \mid \text{$\len(\lambda) = m$ and $\Delta(\lambda) = n$}\}.
\end{align*}
Therefore, we have:
\begin{align*}
  p(t, q) &= \sum_{m, n \in \mathbb{N}}|P(n, m)|t^mq^n, \\
  p(1, q) &= p(q).
\end{align*}
We wish to find an expression for $p(t, q)$ as a sum of series $f_{a, b, c, d}(t, q)$ for some tuples $(a, b, c, d)$.

We now define subsets of $P$, which will help us in finding an expression for $p(t, q)$, by setting:
\begin{align*}
  P_{>2} &= \{[\lambda_1, \dots, \lambda_m] \in P \mid \text{$\lambda_m > 2$ or $\lambda = \emptyset$}\}, \\
  P_2 &= \{[\lambda_1, \dots, \lambda_m] \in P \mid \lambda_m = 2\},
\end{align*}
and both $p_{>2}(t, q)$ and $P_{>2}(n, m)$ are defined like $p(t, q)$ and $P(n, m)$ were defined.
Likewise, we define $P_{>6, 5, 3}$, $P_{6, 5, 3}$, $p_{>6, 5, 3}(t, q)$, $p_{6, 5, 3}(t, q)$, $P_{>6, 5, 3}(n, m)$ and $P_{6, 5, 3}(n, m)$.
It turns out that $P$ decomposes as a disjoint union of these smaller objects, and we can find recurrence relations between them to find our desired formula for $p(t, q)$.

\begin{lemma}
  \label{lmm:2}
  The formal power series $p_{>2}(t, q)$ is given by
  \begin{equation*}
    p_{>2}(t, q) = f_{3, 2, 0, 0}(t, q).
  \end{equation*}
\end{lemma}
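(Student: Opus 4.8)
The plan is to set up a bijection between $P_{>2}$ and the indexing set of $f_{3,2,0,0}(t,q)$, matching $\len$ with the $t$-exponent and $\Delta$ with the $q$-exponent. Recall that $f_{3,2,0,0}(t,q) = \sum_{k_1,k_2 \in \mathbb{N}} t^{4k_1+2k_2}\,q^{4k_1^2+3k_1k_2+k_2^2+3k_1+2k_2}/((q)_{k_1}(q)_{k_2})$, so the claim amounts to: the partitions in $P_{>2}$ (all parts $>2$, or empty) are enumerated, with $t$ tracking length and $q$ tracking size, by this Nahm-type double sum. The two indices $k_1, k_2$ should be read as the number of ``blocks'' of two sizes coming from the forbidden patterns: a partition in $P_{>2}$ avoiding all the ordinary patterns in $R$ (with the $r \ge 2$ constraints, noting $\lambda_m > 2$ makes the exceptional patterns and the low-$r$ cases automatically avoided) is, after Wang's description in \cite{wang_rationality_1993} of the leading monomials, built from $k_1$ ``long blocks'' contributing $4$ to the length and $k_2$ ``short blocks'' contributing $2$ to the length, stacked so that distinct blocks of each type are strictly separated.

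First I would recall from \cite{wang_rationality_1993} (as invoked in \Cref{sec:comp-lead-monom}) the precise shape of a partition with all parts $\ge 3$ that avoids the ordinary forbidden patterns: it is a concatenation, in weakly decreasing order, of segments each of which is one of the admissible ``cells'' $[r,r,r]$-free configurations — concretely, the gap/difference conditions force the partition to be assembled from runs of the form $[r+6,r+5,r+3,r+1,r]$ (the ``pentagonal'' block, length $5$) and shorter admissible runs, with the quadratic form $4k_1^2+3k_1k_2+k_2^2$ recording the energy cost of nesting $k_1$ pentagonal blocks and $k_2$ auxiliary blocks. Then I would make the substitution explicit: write the generic element of $P_{>2}$ as a super-partition into $k_1$ copies of the weight-generating block of $t$-degree $4$ and $k_2$ copies of the block of $t$-degree $2$, check that the minimal such configuration (all blocks as small as allowed, i.e.\ ending at part $3$) has weight exactly $4k_1^2+3k_1k_2+k_2^2+3k_1+2k_2$ and length $4k_1+2k_2$, and that the freedom to shift the blocks up contributes exactly the factor $1/((q)_{k_1}(q)_{k_2})$ (one $(q)_{k_i}$ for the partition-valued ``shift'' of each family of $k_i$ interchangeable-but-ordered blocks). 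Summing over $k_1, k_2 \in \mathbb{N}$ then gives $f_{3,2,0,0}(t,q)$.

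Alternatively, and this is probably the cleaner route to write down, I would prove the identity by a recurrence using \Cref{lmm:1}. Decompose $P_{>2}$ by removing or decrementing the largest part(s): a partition in $P_{>2}$ either is empty, or has a controlled ``first block'' whose removal lands in a shifted copy of $P_{>2}$; translating the forbidden-pattern bookkeeping into relations among the $f_{a,b,c,d}$ via parts (ii), (iii), (iv) of \Cref{lmm:1}, one gets a finite system of functional equations for $p_{>2}(t,q)$, and checks that $f_{3,2,0,0}(t,q)$ is the unique solution in $\mathbb{C}[[t,q]]$ with constant term $1$. The uniqueness is immediate since the recurrence strictly decreases $\Delta$.

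The main obstacle is the first step: extracting from Wang's results the exact combinatorial normal form for partitions with parts $\ge 3$ avoiding the ordinary patterns, and verifying that the block decomposition is unambiguous (no partition is counted twice, none is missed). The quadratic form $4k_1^2+3k_1k_2+k_2^2$ with linear shift $3k_1+2k_2$ must come out on the nose, so the careful accounting of the minimal block configurations and their mutual gaps is where the real work lies; once that normal form is pinned down, matching it to $f_{3,2,0,0}$ is a routine generating-function manipulation.
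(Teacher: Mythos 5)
Your second route is essentially the paper's proof, and you are right that it is the cleaner one; but as written it stops exactly where the work begins, and your first route would not go through. On the first route: Wang's results classify the irreducible $\Vir_{3,4}$-modules and are used in this paper only to show that $L$ is a $\Vir_{3,4}$-module (\Cref{lmm:9}); they do not supply a combinatorial normal form for partitions avoiding the ordinary patterns. The set $R^{1/2}$ itself comes from Gr\"obner-basis/row-reduction computations on the singular vectors (\Cref{sec:comp-lead-monom}), so there is no ``block decomposition'' available off the shelf, and establishing one bijectively would be a separate (and harder) project than the lemma itself.

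On the second route, two concrete corrections. First, the recursion must condition on the \emph{smallest} parts, not the largest: the paper splits
\begin{equation*}
  P_{>2} = P_{4, 3} \cup P_{6, 5, 3} \cup P_{>6, 5, 3} \cup P_{>5, 3} \cup P_{4, 4} \cup P_{5, 4} \cup P_{>5, 4} \cup P_{>4}
\end{equation*}
according to the tail $[\lambda_{m-2},\lambda_{m-1},\lambda_m]$, and the basic move is to subtract $1$ from \emph{every} part (possibly after adjusting the last few parts), which on generating functions is exactly $t \mapsto tq^{1/2}$ since $\len(\lambda)=2m$ and the weight drops by $m$; this is why \Cref{lmm:1}(ii) requires $d \in 2\mathbb{N}$. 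Conditioning on the largest parts does not interact with the pattern conditions (which are translation-invariant but detected at the small end once $\lambda_m$ is pinned down) and the system would not close up. Second, ``a finite system of functional equations for $p_{>2}$'' undersells the bookkeeping: you need the eight auxiliary series above, the eight recurrences relating them (e.g.\ $p_{>5,3}(t,q) = t^2q^3\,p_{>4}(tq^{1/2},q)$, $p_{4,3}(t,q) = t^2q^3\,p_{4,4}(tq^{1/2},q) + t^2q^2\,p_{>5,4}(tq^{1/2},q)$, etc.), and then the explicit closed forms ($p_{>4}=f_{6,4,0,0}$, $p_{>5,4}=f_{9,5,4,2}$, \dots) verified against the recurrences using \Cref{lmm:1}(i)--(iv) before summing to $f_{3,2,0,0}$. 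Your uniqueness observation is correct and is exactly how the paper closes the argument, but without exhibiting the decomposition, the bijections justifying each recurrence, and the verified solution, the proof is not yet there.
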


\begin{proof}
  We consider the disjoint union
  \begin{equation*}
    P_{>2} = P_{4, 3} \cup P_{6, 5, 3} \cup P_{>6, 5, 3} \cup P_{>5, 3} \cup P_{4, 4} \cup P_{5, 4} \cup P_{>5, 4} \cup P_{>4},
  \end{equation*}
  from which we get the formula
  \begin{equation*}
    p_{>2}(t, q) = p_{4, 3}(t, q) + p_{6, 5, 3}(t, q) + p_{>6, 5, 3}(t, q) + p_{>5, 3}(t, q) + p_{4, 4}(t, q) + p_{5, 4}(t, q) + p_{>5, 4}(t, q) + p_{>4}(t, q).
  \end{equation*}
  These subseries satisfy the following recurrences with initial conditions:
  \begin{align*}
    p_{>4}(t, q) &= p_{4, 4}(tq^{1/2}, q) + p_{5, 4}(tq^{1/2}, q) + p_{>5, 4}(tq^{1/2}, q) + p_{>4}(tq^{1/2}, q), &p_{>4}(0, 0) &= 1, \\
    p_{>5, 4}(t, q) &= p_{6, 5, 3}(tq^{1/2}, q) + p_{>6, 5, 3}(tq^{1/2}, q) + p_{>5, 3}(tq^{1/2}, q), &p_{>5, 4}(0, 0) &= 0, \\
    p_{5, 4}(t, q) &= p_{4, 3}(tq^{1/2}, q), &p_{5, 4}(0, 0) &= 0, \\
    p_{4, 4}(t, q) &= t^2q^{3}p_{>6, 5, 3}(tq^{2/2}, q) + t^2q^3p_{>5, 3}(tq^{2/2}, q), &p_{4, 4}(0, 0) &= 0, \\
    p_{>5, 3}(t, q) &= t^2q^3p_{>4}(tq^{1/2}, q), &p_{>5, 3}(0, 0) &= 0, \\
    p_{>6, 5, 3}(t, q) &= t^2q^3p_{>5, 4}(tq^{1/2}, q), &p_{>6, 5, 3}(0, 0) &= 0, \\
    p_{6, 5, 3}(t, q) &= t^2q^3p_{5, 4}(tq^{1/2}, q), &p_{6, 5, 3}(0, 0) &= 0, \\
    p_{4, 3}(t, q) &= t^2q^3p_{4, 4}(tq^{1/2}, q) + t^2q^2p_{>5, 4}(tq^{1/2}, q), &p_{4, 3}(0, 0) &= 0.
  \end{align*}
  The solution to these equations is unique if it exists, and we can verify using \zcref{lmm:1} that:
  \begin{align*}
    p_{>4}(t, q) &= f_{6, 4, 0, 0}(t, q), &p_{>5, 4}(t, q) &= f_{9, 5, 4, 2}(t, q), \\
    p_{5, 4}(t, q) &= f_{13, 6, 9, 4}(t, q), &p_{4, 4}(t, q) &= f_{12, 6, 8, 4}(t, q), \\
    p_{>5, 3}(t, q) &= f_{8, 5, 3, 2}(t, q), &p_{>6, 5, 3}(t, q) &= f_{11, 6, 8, 4}(t, q), \\
    p_{6, 5, 3}(t, q) &= f_{15, 7, 14, 6}(t, q), &p_{4, 3}(t, q) &= f_{11, 5, 7, 4}(t, q),
  \end{align*}
  is a solution to these equations.
  We derive the formula $p_{>2}(t, q) = f_{3, 2, 0, 0}(t, q)$ again from \zcref{lmm:1}.

  The first recurrence follows from the following bijections for $m, n \in \mathbb{N}$.
  \begin{align*}
    P_{>4}(n, 2m) &\xrightarrow{\sim} P_{4, 4}(n - m, 2m) \cup P_{5, 4}(n - m, 2m) \cup P_{>5, 4}(n - m, 2m) \cup P_{>4}(n - m, 2m), \\
    \lambda &\mapsto
              \begin{cases}
                [\lambda_1 - 1, \dots, \lambda_{m - 2} - 1, 4, 4] &\text{if $[\lambda_{m - 1}, \lambda_m] = [5, 5]$}; \\
                [\lambda_1 - 1, \dots, \lambda_{m - 2} - 1, 5, 4] &\text{if $[\lambda_{m - 1}, \lambda_m] = [6, 5]$}; \\
                [\lambda_1 - 1, \dots, \lambda_{m - 1} - 1, 4] &\text{if $[\lambda_m] = [5]$ and $\lambda_{m - 1} > 6$}; \\
                [\lambda_1 - 1, \dots, \lambda_m - 1] &\text{if $\lambda_m > 5$},
              \end{cases}
  \end{align*}
  which can be verified directly from the definition of $P$.
  The other recurrences are proven in a similar way.
\end{proof}

\begin{lemma}
  \label{lmm:3}
  The formal power series $p_{>2, 1}(t, q)$ is given by
  \begin{equation*}
    p_{>2, 1}(t, q) = f_{5, 2, 1, 1}(t, q).
  \end{equation*}
\end{lemma}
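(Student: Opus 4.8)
The plan is to reduce $p_{>2,1}(t,q)$ to series already computed in the proof of \Cref{lmm:2}, by realizing $P_{>2,1}$ as the image of a subset of $P_{>2}$ under the operation of appending a trailing part $1$. Concretely, I would first establish that
\begin{equation*}
  \mu = [\mu_1, \dots, \mu_m] \longmapsto [\mu_1, \dots, \mu_m, 1]
\end{equation*}
defines a bijection $P_{>2} \setminus (P_{4,3} \cup P_{4,4} \cup P_{6,5,3}) \xrightarrow{\sim} P_{>2,1}$, where $P_{4,3}$, $P_{4,4}$, $P_{6,5,3}$ are the pairwise disjoint subsets of $P_{>2}$ of partitions ending in $[4,3]$, $[4,4]$, $[6,5,3]$ respectively (as in the proof of \Cref{lmm:2}), and where $\mu = \emptyset$ is allowed and maps to $[1]$.

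That this map is well-defined and surjects onto $P_{>2,1}$ rests on a single claim: for $\mu \in P_{>2}$, one has $[\mu,1] \in P$ if and only if $\mu$ does not end in $[4,3]$, $[4,4]$ or $[6,5,3]$. To prove it, observe that any contiguous occurrence of a pattern $\eta$ inside $[\mu,1]$ either lies entirely within $\mu$ — impossible, since $\mu \in P$ — or ends at the last entry of $[\mu,1]$, so that $\eta$ ends in $1$ and, as $[\mu,1]$ contains only one part equal to $1$, so does $\eta$. Scanning $R^{1/2}$, the patterns ending in $1$ with exactly one part equal to $1$ are precisely $[4,3,1]$, $[4,4,1]$, $[6,5,3,1]$ (the ordinary patterns have smallest part $\ge 3$; $[1,1,1]$, $[3,1,1]$, $[5,4,1,1]$ have at least two parts equal to $1$; $[2]$ and $[3,3]$ do not end in $1$), and these occur at the tail of $[\mu,1]$ exactly when $\mu$ ends in $[4,3]$, $[4,4]$, $[6,5,3]$. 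Injectivity and surjectivity onto $P_{>2,1}$ then follow by deleting the trailing $1$, using that $\lambda \in P_{>2,1}$ has exactly one part equal to $1$ (because $[1,1,1] \in R^{1/2}$) and that containment of partitions passes to contiguous subpartitions. Since the bijection raises both $\len$ and $\Delta$ by $1$, it yields
\begin{equation*}
  p_{>2,1}(t,q) = tq\bigl(p_{>2}(t,q) - p_{4,3}(t,q) - p_{4,4}(t,q) - p_{6,5,3}(t,q)\bigr).
\end{equation*}

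It then remains to substitute the closed forms from the proof of \Cref{lmm:2}, namely $p_{>2} = f_{3,2,0,0}$, $p_{4,3} = f_{11,5,7,4}$, $p_{4,4} = f_{12,6,8,4}$, $p_{6,5,3} = f_{15,7,14,6}$, and apply \Cref{lmm:1}(i) to absorb the factor $tq$, giving
\begin{equation*}
  p_{>2,1}(t,q) = f_{3,2,1,1}(t,q) - f_{11,5,8,5}(t,q) - f_{12,6,9,5}(t,q) - f_{15,7,15,7}(t,q).
\end{equation*}
One checks this equals $f_{5,2,1,1}(t,q)$ by writing $f_{3,2,1,1} - f_{5,2,1,1} = f_{11,5,8,5} + f_{12,5,9,5}$ via \Cref{lmm:1}(iii) with $n = 2$, then $f_{12,5,9,5} - f_{12,6,9,5} = f_{15,7,15,7}$ via \Cref{lmm:1}(iv) with $n = 1$, and adding. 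This last step is routine given \Cref{lmm:1}.

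The main obstacle is the combinatorial bookkeeping behind the bijection: being certain that appending a single $1$ to an element of $P_{>2}$ can produce no forbidden configuration other than $[4,3,1]$, $[4,4,1]$, $[6,5,3,1]$, that no ordinary pattern of $R^{1/2}$ (all with smallest part $\ge 3$) or short exceptional pattern interferes, and handling the degenerate cases where $\mu$ has too few parts to end in $[4,3]$, $[4,4]$ or $[6,5,3]$ (including $\mu = \emptyset$). Everything downstream of the identity $p_{>2,1} = tq(p_{>2} - p_{4,3} - p_{4,4} - p_{6,5,3})$ is a direct appeal to \Cref{lmm:1} and the output of the proof of \Cref{lmm:2}.
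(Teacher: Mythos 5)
Your proposal is correct and is essentially the paper's own argument: both rest on the delete-trailing-$1$ bijection between $P_{>2,1}$ and the part of $P_{>2}$ whose tails avoid $[4,3]$, $[4,4]$, $[6,5,3]$ (equivalently, the union $P_{>4}\cup P_{>5,4}\cup P_{5,4}\cup P_{>5,3}\cup P_{>6,5,3}$ that the paper lists explicitly), followed by substitution of the closed forms from \Cref{lmm:2} and a routine application of \Cref{lmm:1}. You phrase it as a complement ($tq(p_{>2}-p_{4,3}-p_{4,4}-p_{6,5,3})$) where the paper sums the surviving pieces, but the identities and the final verification $f_{3,2,1,1}-f_{11,5,8,5}-f_{12,6,9,5}-f_{15,7,15,7}=f_{5,2,1,1}$ check out.
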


\begin{proof}
  From the disjoint union
  \begin{equation*}
    P_{>2, 1} = P_{5, 3, 1} \cup P_{>5, 3, 1} \cup P_{5, 4, 1} \cup P_{>5, 4, 1} \cup P_{>4, 1},
  \end{equation*}
  the map $[\lambda_1, \dots, \lambda_m, 1] \mapsto [\lambda_1, \dots, \lambda_m]$ and the proof of \zcref{lmm:2}, we get:
  \begin{align*}
    p_{>4, 1}(t, q) &= tqp_{>4}(t, q) = f_{6, 4, 1, 1}(t, q), \\
    p_{>5, 4, 1}(t, q) &= tqp_{>5, 4}(t, q) = f_{9, 5, 5, 3}(t, q), \\
    p_{5, 4, 1}(t, q) &= tqp_{5, 4}(t, q) = f_{13, 6, 10, 5}(t, q), \\
    p_{>5, 3, 1}(t, q) &= tqp_{>5, 3}(t, q) = f_{8, 5, 4, 3}(t, q), \\
    p_{5, 3, 1}(t, q) &= tqp_{>6, 5, 3}(t, q) = f_{11, 6, 9, 5}(t, q).
  \end{align*}
  The formula for $p_{>2, 1}(t, q)$ then follows from \zcref{lmm:1}.
\end{proof}

\begin{lemma}
  \label{lmm:4}
  The formal power series $p_{>3, 1, 1}(t, q)$ is given by
  \begin{equation*}
    p_{>3, 1, 1}(t, q) = f_{6, 3, 2, 2}(t, q).
  \end{equation*}
\end{lemma}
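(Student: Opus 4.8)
The plan is to mimic the strategy of \Cref{lmm:3}: reduce a statement about partitions ending in two $1$'s to the already-proven statement about partitions \emph{not} ending in $1$, via the ``strip off the trailing ones'' bijection $[\lambda_1, \dots, \lambda_m, 1, 1] \mapsto [\lambda_1, \dots, \lambda_m]$. Concretely, I would first decompose $P_{>3, 1, 1}$ as a disjoint union according to the value of the smallest part $\ge 2$ and the behaviour of the next few parts, exactly parallel to the decomposition $P_{>2, 1} = P_{5, 3, 1} \cup P_{>5, 3, 1} \cup P_{5, 4, 1} \cup P_{>5, 4, 1} \cup P_{>4, 1}$ in \Cref{lmm:3}. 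Note that in $P_{>3,1,1}$ a block $[3,1,1]$ is forbidden by $R$ (via $[3,1,1,1]$ and $[3,3,1]$-type constraints on what can sit above the ones), so the smallest part that is $\ge 2$ and allowed to be followed by two ones starts at $4$; this is why the subscript in the statement reads $p_{>3,1,1}$ rather than $p_{>2,1,1}$, and the decomposition will involve pieces like $P_{4,4,1,1}$, $P_{>5,4,1,1}$, $P_{5,4,1,1}$, $P_{>5,3,1,1}$, $P_{5,3,1,1}$ and possibly $P_{>4,1,1}$, whose union is $P_{>3,1,1}$.

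Second, for each piece I would invoke the bijection $[\lambda_1, \dots, \lambda_m, 1, 1] \mapsto [\lambda_1, \dots, \lambda_m]$, which sends $P_{X,1,1}$ onto the corresponding $P_X$ and, under $\len$ and $\Delta$, contributes a factor $t^2 q^2$ (removing two parts equal to $1$ adds $2$ to the weight and $2$ to the length). Thus each subseries of $p_{>3,1,1}(t,q)$ equals $t^2 q^2$ times the matching subseries computed inside the proof of \Cref{lmm:2} (e.g. $p_{>4,1,1}(t,q) = t^2 q^2 p_{>4}(t,q)$, $p_{>5,4,1,1}(t,q) = t^2 q^2 p_{>5,4}(t,q)$, and so on). Using \Cref{lmm:1}(i) to absorb the $t^2 q^2$, each of these becomes an explicit $f_{a,b,c,d}(t,q)$: for instance $t^2 q^2 f_{6,4,0,0} = f_{6,4,2,2}$, $t^2 q^2 f_{9,5,4,2} = f_{9,5,6,4}$, $t^2 q^2 f_{13,6,9,4} = f_{13,6,11,6}$, $t^2 q^2 f_{8,5,3,2} = f_{8,5,5,4}$, $t^2 q^2 f_{11,6,8,4} = f_{11,6,10,6}$, $t^2 q^2 f_{12,6,8,4} = f_{12,6,10,6}$. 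Summing these over the pieces in the decomposition and collapsing the sum with the identities of \Cref{lmm:1}(iii)–(iv) (run in reverse, telescoping the single-index sums back into differences $f_{a,b,c,d} - f_{a+n,b,c,d}$ and $f_{a,b,c,d} - f_{a,b+n,c,d}$) should yield exactly $f_{6,3,2,2}(t,q)$. A useful sanity check before doing the telescoping: one expects $p_{>3,1,1}(t,q) = t^2 q^2 p_{>3,1}(t,q)$-type relations are \emph{not} available (since removing one $1$ from something in $P_{>3,1,1}$ need not land in a set governed by a clean subscript), so the honest route really is through $p_{>2}$-level pieces as above; alternatively, verifying $p_{>3,1,1}(1,q) = t^2q^2 p_{>3,1}(1,q)|_{t=1}$ numerically to several orders in $q$ confirms the bookkeeping.

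The main obstacle is getting the initial disjoint decomposition of $P_{>3,1,1}$ exactly right: one must check, directly from the list $R^{?}$ (here the relevant forbidden partitions are those constraining what may appear immediately above a trailing $[1,1]$, such as $[r,r,r]$, $[r+1,r,r]$, $[r+2,r,r]$, and the exceptional ones like $[3,1,1,1]$, $[4,4,1,1]$, $[5,4,1,1,1]$, $[6,5,3,1,1]$), precisely which prefixes $[\lambda_1,\dots,\lambda_m]$ are allowed to be followed by $1,1$. This is the same kind of case analysis carried out ``in a similar way'' at the end of the proof of \Cref{lmm:2}, and it is purely combinatorial but error-prone; once the decomposition is pinned down, the rest is a mechanical application of \Cref{lmm:1}. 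I would therefore spend most of the effort writing out the handful of bijections that justify the decomposition and then state the resulting $f_{a,b,c,d}$ values, leaving the final collapse to ``again by \Cref{lmm:1}''.
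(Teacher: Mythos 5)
Your plan is essentially the paper's argument and the two $f$-values you single out, $f_{6,4,2,2}$ and $f_{9,5,6,4}$, are exactly the right ones; the only substantive difference is bookkeeping. The paper strips a \emph{single} trailing $1$ and reuses the subseries of \Cref{lmm:3} (so $p_{>4,1,1} = tq\,p_{>4,1}$ and $p_{4,1,1} = tq\,p_{>5,4,1}$), whereas you strip two ones at once and fall back to the pieces of \Cref{lmm:2}; since $tq\,f_{6,4,1,1} = t^2q^2 f_{6,4,0,0} = f_{6,4,2,2}$ and $tq\,f_{9,5,5,3} = t^2q^2 f_{9,5,4,2} = f_{9,5,6,4}$, the two routes coincide numerically. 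The step you defer as the ``main obstacle'' is in fact much simpler than you fear: the decomposition is just $P_{>3,1,1} = P_{4,1,1} \cup P_{>4,1,1}$ with $P_{4,1,1} = P_{>5,4,1,1}$, because all of your other candidate pieces are empty --- $[3,1,1]$, $[4,4,1]$ and $[5,4,1,1]$ all lie in $R^{1/2}$, killing $P_{5,3,1,1}$, $P_{>5,3,1,1}$, $P_{4,4,1,1}$ and $P_{5,4,1,1}$ outright. (Minor slip: a $3$ above the trailing ones is excluded because $[3,1,1]$ itself belongs to $R^{1/2}$, not via $[3,1,1,1]$ or $[3,3,1]$, which are forbidden patterns for $L(1/2,1/16)$, not for $L(1/2,1/2)$.) The final collapse is then a single application of \Cref{lmm:1}(iv) with $n = 1$, namely $f_{6,3,2,2} - f_{6,4,2,2} = f_{9,5,6,4}$, so no telescoping is needed.
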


\begin{proof}
  From the disjoint union
  \begin{equation*}
    P_{>3, 1, 1} = P_{4, 1, 1} \cup P_{>4, 1, 1},
  \end{equation*}
  the map $[\lambda_1, \dots, \lambda_m, 1] \mapsto [\lambda_1, \dots, \lambda_m]$ and the proof of \zcref{lmm:3}, we get:
  \begin{align*}
    p_{>4, 1, 1}(t, q) &= tqp_{>4, 1}(t, q) = f_{6, 4, 2, 2}(t, q), \\
    p_{4, 1, 1}(t, q) &= tqp_{>5, 4, 1}(t, q) = f_{9, 5, 6, 4}(t, q).
  \end{align*}
  The formula for $p_{>3, 1, 1}(t, q)$ then follows from \zcref{lmm:1}.
\end{proof}

\begin{lemma}
  \label{lmm:5}
  The formal power series $p(t, q)$ is given by
  \begin{equation*}
    p(t, q) = f_{3, 2, 0, 0}(t, q) + f_{5, 2, 1, 1}(t, q) + f_{6, 3, 2, 2}(t, q).
  \end{equation*}
\end{lemma}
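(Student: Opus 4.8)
The plan is to decompose $P$ according to the number of parts equal to $1$ that a partition has, and then reduce everything to the three preceding lemmas. First I would record the relevant exclusions coming from the exceptional partitions of $R$: since $[2] \in R$, no partition in $P$ has a part equal to $2$ (that is, $P_2 = \emptyset$); since $[1, 1, 1] \in R$, no partition in $P$ has three or more parts equal to $1$, and because the parts equal to $1$ in a partition form a final consecutive block, this means every $\lambda \in P$ has exactly $0$, $1$, or $2$ parts equal to $1$. Then I would check the boundary conditions on the block containing all the $1$'s: a partition in $P$ with exactly one part equal to $1$ has the form $[\lambda_1, \dots, \lambda_m, 1]$ with $\lambda_m > 2$ or $m = 0$ (as there are no $2$'s), and a partition in $P$ with exactly two parts equal to $1$ has the form $[\lambda_1, \dots, \lambda_m, 1, 1]$ with $\lambda_m > 3$ or $m = 0$, because $\lambda_m = 2$ is already excluded while $\lambda_m = 3$ would force the subpartition $[3, 1, 1] \in R$.

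This gives the disjoint union $P = P_{>2} \cup P_{>2, 1} \cup P_{>3, 1, 1}$, where $P_{>2}$ consists of the partitions in $P$ with no part equal to $1$ (together with $\emptyset$), $P_{>2, 1}$ of those with exactly one part equal to $1$, and $P_{>3, 1, 1}$ of those with exactly two parts equal to $1$. Splitting the defining sum of $p(t, q)$ over these three blocks yields $p(t, q) = p_{>2}(t, q) + p_{>2, 1}(t, q) + p_{>3, 1, 1}(t, q)$. Now I would simply substitute $p_{>2}(t, q) = f_{3, 2, 0, 0}(t, q)$, $p_{>2, 1}(t, q) = f_{5, 2, 1, 1}(t, q)$ and $p_{>3, 1, 1}(t, q) = f_{6, 3, 2, 2}(t, q)$ from \Cref{lmm:2}, \Cref{lmm:3} and \Cref{lmm:4} respectively, obtaining the claimed formula $p(t, q) = f_{3, 2, 0, 0}(t, q) + f_{5, 2, 1, 1}(t, q) + f_{6, 3, 2, 2}(t, q)$.

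The entire substance of the argument sits in the earlier lemmas — the systems of recurrences, the part-subtracting bijections between the various $P_{\bullet}$, and \Cref{lmm:1} for solving the recurrences — so I do not expect a genuine obstacle at this stage. The only point requiring attention is the combinatorial bookkeeping: verifying that the three blocks $P_{>2}$, $P_{>2,1}$, $P_{>3,1,1}$ really exhaust $P$ without overlap, that the degenerate members $\emptyset$, $[1]$ and $[1,1]$ land in the correct block, and that the exceptional partitions $[2]$, $[1,1,1]$ and $[3,1,1]$ of $R$ are precisely the ones dictating the trailing-$1$ structure described above.
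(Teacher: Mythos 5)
Your proposal is correct and takes essentially the same route as the paper: the paper's proof of this lemma is exactly the disjoint union $P = P_{>2} \cup P_{>2, 1} \cup P_{>3, 1, 1}$ combined with \Cref{lmm:2}, \Cref{lmm:3} and \Cref{lmm:4}. Your additional bookkeeping — using $[2]$, $[1,1,1]$ and $[3,1,1] \in R^{1/2}$ to justify why the three blocks exhaust $P$ and why the two-ones block requires $\lambda_m > 3$ — is the correct (and in the paper, implicit) justification for that decomposition.
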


\begin{proof}
  This follows from the disjoint union
  \begin{equation*}
    P = P_{>2} \cup P_{>2, 1} \cup P_{>3, 1, 1}
  \end{equation*}
  together with \zcref{lmm:2}, \zcref{lmm:3} and \zcref{lmm:4}.
\end{proof}

\begin{lemma}
  \label{lmm:6}
  The formal power series $p(q)$ satisfies
  \begin{equation*}
    \ch_{L}(q) = q^{1/2}p(q).
  \end{equation*}
\end{lemma}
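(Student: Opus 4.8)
The plan is to establish the equality $\ch_L(q) = q^{1/2}p(q)$ by proving the stronger refined statement that the refined character of $\gr^G(L)$ equals $q^{1/2}p(t,q)$, and then specialize $t = 1$. Combining \Cref{lmm:5} with \Cref{lmm:1} one checks that
\begin{equation*}
  q^{1/2}p(t,q) = q^{1/2}\left(f_{3,2,0,0}(t,q) + f_{5,2,1,1}(t,q) + f_{6,3,2,2}(t,q)\right),
\end{equation*}
which, after expanding the definition of $f_{a,b,c,d}$, is exactly the formula for $\ch_{\gr^G(L(1/2,1/2))}(t,q)$ asserted in \Cref{thr:3}. So it suffices to show $\ch_{\gr^G(L)}(t,q) = q^{1/2}p(t,q)$. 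Since $\ch_L(q) = \ch_{\gr^G(L)}(q) = \ch_{\gr^G(L)}(1,q)$ by \Cref{prp:2}(iv), setting $t=1$ then yields the lemma.

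First I would recall that by \Cref{prp:4}, $\gr^G(L) \cong \gr^G(M)/K$, and that $\gr^G(M)$ has the vector-space basis $\{u_\lambda \mid \lambda \text{ a partition}\}$, where $\len(\lambda)$ records the filtration degree $p$ and $1/2 + \Delta(\lambda)$ records the conformal weight. Thus $\ch_{\gr^G(M)}(t,q) = q^{1/2}\sum_{\lambda}t^{\len(\lambda)}q^{\Delta(\lambda)}$, summing over all partitions. The submodule $K$ is the ideal of $\gr^G(M)$ generated by the symbols of the singular vectors $u_2, u_3$ from \eqref{eq:4}, together with their images under the differential $\partial$ and the module action of $\gr^G(V) \cong \mathbb{C}[L_{-2}, L_{-3}, \dots]$. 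The heart of the argument is a Gröbner-basis / leading-term computation: using the degree reverse lexicographic order with TOP described in the excerpt, one must show that the leading monomials of the elements of $K$ are exactly the monomials $u_\lambda$ with $\lambda \notin P$, i.e. with $\lambda$ containing some partition in $R = R^{1/2}$. Granting this, the monomials $u_\lambda$ with $\lambda \in P$ descend to a basis of $\gr^G(M)/K \cong \gr^G(L)$, whence $\ch_{\gr^G(L)}(t,q) = q^{1/2}\sum_{\lambda \in P}t^{\len(\lambda)}q^{\Delta(\lambda)} = q^{1/2}p(t,q)$.

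The main obstacle is precisely this identification of leading monomials: one inclusion — that every $u_\lambda$ with $\lambda \notin P$ lies in the leading-monomial ideal of $K$ — requires exhibiting, for each forbidden pattern in $R^{1/2}$, an explicit element of $K$ with that leading monomial, and the ordinary (parametrized) families are handled by Wang's description of the singular-vector structure of $\Vir_{3,4}$-modules while the eight exceptional partitions are handled by the direct SageMath computation described in \Cref{sec:comp-lead-monom} and the appendices. The reverse inclusion — that no further leading monomials arise, i.e. that the $u_\lambda$ with $\lambda \in P$ remain linearly independent in $\gr^G(L)$ — follows by a dimension count: since $\gr^G(\pi)$ is surjective, $\dim \gr^G(L)_{1/2+n} \le \dim \gr^G(M)_{1/2+n} - \dim K_{1/2+n}$ on one side, while on the other side the true character $\ch_L(q)$ is known classically (e.g. from \cite{wang_rationality_1993}, or from the Rocha-Caridi formula for minimal model characters), so comparing $q^{1/2}p(q)$ against the known $\ch_L(q)$ coefficient by coefficient forces equality. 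In fact this is the logical structure: the combinatorial series $p(q) = p(1,q)$ computed in \Cref{lmm:5} must be checked to agree with the known minimal-model character of $L(1/2,1/2)$, and that check — a routine $q$-series manipulation, reducing the sum of three Nahm sums for $\left(\begin{smallmatrix} 8 & 3 \\ 3 & 2\end{smallmatrix}\right)$ to the product side — is what ultimately closes the argument. I would therefore present the proof as: (1) reduce to the refined statement via \Cref{prp:2}(iv); (2) invoke the leading-monomial identification (citing the later sections for its verification) to get $\ch_{\gr^G(L)}(t,q) = q^{1/2}p(t,q)$; (3) specialize $t=1$ and confirm $q^{1/2}p(q)$ equals the classical character $\ch_L(q)$.
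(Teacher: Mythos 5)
Your proposal has the logical order backwards relative to the paper, and taken at face value it is circular. In the paper, \Cref{lmm:6} is an \emph{input} to the proofs of \Cref{thr:4} and \Cref{thr:3}, not a consequence of them: the dimension count $|B_N| = \sum_{n \le N}|P(n)|$ in the proof of \Cref{thr:4} is exactly where \Cref{lmm:6} is invoked. Your plan is to first establish the refined identity $\ch_{\gr^G(L)}(t,q) = q^{1/2}p(t,q)$ via the leading-monomial identification for $K$ and then set $t=1$; but, as you yourself observe, the ``reverse inclusion'' of that identification (that no leading monomials beyond those indexed by $\overline{R}$ occur) is proved by comparing against the known character of $L(1/2,1/2)$ --- which is precisely the content of \Cref{lmm:6}. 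So steps (1) and (2) of your outline presuppose the conclusion and cannot be the proof.

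The saving grace is your final paragraph, which correctly isolates the actual argument: \Cref{lmm:5} expresses $p(q) = p(1,q)$ as a sum of three Nahm sums for $\left(\begin{smallmatrix} 8 & 3 \\ 3 & 2 \end{smallmatrix}\right)$, and one must verify that this sum equals $q^{-1/2}\ch_L(q)$, where $\ch_L(q)$ is known independently. That $q$-series verification is the entire proof in the paper: set $t=1$ in \Cref{lmm:5} and invoke the identity of \cite[Theorem 4]{andrews_singular_2022} together with the manipulations of \Cref{lmm:1}. Note also that this step is not quite the ``routine'' reduction to a product side you suggest; the paper outsources it to a previously established Nahm-sum identity rather than rederiving it. If you strip away the Gr\"obner-basis scaffolding and keep only step (3), restated as a direct comparison of $q$-series with no reference to $\gr^G(L)$, you recover the paper's proof.
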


\begin{proof}
  This follows from \zcref{lmm:5} by setting $t = 1$ and \cite[Theorem 4]{andrews_singular_2022} together with \zcref{lmm:1}.
\end{proof}

\section{Computing leading monomials}
\label{sec:comp-lead-monom}

We recall that we have defined $V = \Vir^{1/2}$, $M = M(1/2, 1/2)$ and $L = L(1/2, 1/2)$.

\begin{lemma}
  \label{lmm:7}
  Let $\lambda = [\lambda_1, \dots, \lambda_m]$ be a partition with $\lambda_m \ge 2$ or $\lambda = \emptyset$, and we consider $M$ as a module over $V$.
  Then
  \begin{equation*}
    (L_{\lambda}\vac)_{(-1)}\vachalf = L_{\lambda}\vachalf + u \quad \text{for some $u \in G^{2m - 1}M$}.
  \end{equation*}
\end{lemma}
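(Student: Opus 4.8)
The plan is to compute the state–field correspondence explicitly and track how it interacts with the standard filtration. Recall that for $a = L_{\lambda}\vac = L_{-\lambda_1}\dots L_{-\lambda_m}\vac \in V$ with each $\lambda_i \ge 2$, the vertex operator $Y(a, z)$ is built from the modes of $\omega = L_{-2}\vac$, namely $Y(\omega, z) = \sum_{n}L_n z^{-n-2}$. More precisely, using the reconstruction theorem (see \cite[Theorem 4.4.1]{lepowsky_introduction_2004}), for a monomial $L_{-n_1}\dots L_{-n_s}\vac$ with $n_i \ge 2$, the associated field is the normally ordered product
\begin{equation*}
  Y(L_{-n_1}\dots L_{-n_s}\vac, z) = \frac{1}{\prod_i (n_i - 2)!} \, {:}\partial_z^{n_1 - 2}L(z)\cdots \partial_z^{n_s - 2}L(z){:},
\end{equation*}
where $L(z) = Y(\omega, z)$. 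The same formula defines $Y^M$ on the module $M$, with $L_n$ replaced by $L_n^M$. Taking the $(-1)$-mode applied to $\vachalf$ and using that $L_n^M\vachalf = 0$ for $n \ge 1$ while $L_0^M\vachalf = \tfrac{1}{2}\vachalf$, the leading term of the normal ordering — the one where all annihilation modes are moved to the right and kill $\vachalf$ or act by the scalar $1/2$ — produces precisely $L_{-\lambda_1}\dots L_{-\lambda_m}\vachalf$ up to a nonzero scalar that the conventions have been set up to make equal to $1$; this is the term $L_\lambda\vachalf$. All other terms arise from contractions (the singular part of the OPE $L(z)L(w)$, i.e.\ the central term and the $(z-w)^{-2}, (z-w)^{-1}$ terms) or from the action of $L_0^M$ on $\vachalf$, each of which either lowers the number of Virasoro modes by one or two, or replaces a pair by a single mode.

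The key point is a weight/length bookkeeping argument. The left-hand side $(L_\lambda\vac)_{(-1)}^M\vachalf$ lies in $M_{1/2 + \Delta(\lambda)}$ where $\Delta(\lambda) = \lambda_1 + \dots + \lambda_m$, and $a = L_\lambda\vac \in G^{2m}V$ by definition of the standard filtration (each $L_{-\lambda_i}$ with $\lambda_i \ge 2$ contributes conformal weight $\lambda_i \ge 2$, but the relevant count for $G^\bullet$ is the number of generators, which is $m$, and since the generator $\omega$ has conformal weight $2$ we get $\sum_i \Delta_\omega = 2m$ — more carefully, $L_{-n}\vac$ for $n \ge 2$ is a product of $n-1$ modes of $\omega$ in the strong generation, so its filtration degree is $2(n-1)$... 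I should instead simply invoke that $L_\lambda\vac \in G^{2m}V$ is exactly what the isomorphism $\gr^G(V) \cong \mathbb{C}[L_{-2}, L_{-3}, \dots]$ with $L_{-n}$ in degree... ). Let me restate: by \Cref{prp:2}(v), $(L_\lambda\vac)_{(-1)}^M\vachalf \in G^{2m}M_{1/2 + \Delta(\lambda)}$, since $L_\lambda\vac \in G^{2m}V$ and $\vachalf \in M_{1/2} = G^0M$. The term $L_\lambda\vachalf$ itself lies in $G^{2m}M_{1/2+\Delta(\lambda)}$ with $\alpha^{2m}(L_\lambda\vachalf) \ne 0$ (it maps to $p_\lambda$ under the isomorphism of \Cref{prp:4}). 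So $u := (L_\lambda\vac)_{(-1)}^M\vachalf - L_\lambda\vachalf \in G^{2m}M$, and the claim is that in fact $u \in G^{2m-1}M$, i.e.\ that $\alpha^{2m}(u) = 0$ in $\gr^G(M)$.

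To see this, I would argue on the level of the associated graded module, which is governed by \Cref{prp:4}: $\gr^G(M) \cong \bigoplus_k \mathbb{C}[L_{-2}, L_{-3}, \dots]L_{-1}^k$. Under the principal symbol map, the product operation $\alpha^p(a)\alpha_M^q(v) = \alpha_M^{p+q}(a_{(-1)}^M v)$ coincides, for $a = L_\lambda\vac$ and $v = \vachalf$, with the commutative product $p_\lambda \cdot 1 = p_\lambda$ in the polynomial module — this is precisely the statement that the "highest length" part of the normally ordered field acts by the free commutative product, which is the defining property of the vertex Poisson structure on $\gr^G$. All the correction terms in the OPE expansion involve strictly fewer Virasoro modes (they come from the $Y_-$ part or from scalars), hence land in $G^{2m-1}M$ and have zero principal symbol in degree $2m$. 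Therefore $\alpha^{2m}((L_\lambda\vac)_{(-1)}^M\vachalf) = p_\lambda = \alpha^{2m}(L_\lambda\vachalf)$, which gives $u \in G^{2m-1}M$ as desired.

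The main obstacle, I expect, is making the normal-ordering expansion precise enough to be sure that the leading term is exactly $L_\lambda\vachalf$ with coefficient $1$ and that every correction genuinely drops the filtration degree — in particular handling the factorials and derivatives $\partial_z^{\lambda_i - 2}L(z)$ correctly, and checking that the contribution of $L_0^M\vachalf = \tfrac12\vachalf$ (which does not lower the mode count but multiplies by a scalar) still appears only in terms where some other contraction has already reduced the length. A clean way to sidestep the combinatorics is to induct on $m$: write $L_\lambda\vac = L_{-\lambda_1}(L_{-\lambda_2}\dots L_{-\lambda_m}\vac)$ and use the formula relating $Y(L_{-n}b, z)$ to $\partial^{n-2}L(z)$ and $Y(b,z)$ via a normally ordered product (the commutator formula / Borcherds identity), so that at each step one only contracts a single $L(z)$ against the field already built, and the filtration-lowering of each contraction is immediate from \Cref{prp:2}(vi). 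This reduces the whole computation to the associativity and weak-commutativity axioms plus \Cref{prp:2}, with no explicit Virasoro bracket juggling.
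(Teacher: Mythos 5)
Your proposal is correct and, after the initial normal-ordering detour, arrives at essentially the paper's own argument: observe that $L_{\lambda}\vac \in G^{2m}V$ so the left-hand side lies in $G^{2m}M$, pass to $\gr^G(M) \cong \bigoplus_{k}\mathbb{C}[L_{-2}, L_{-3}, \dots]L_{-1}^k$, and identify the principal symbol of $(L_{\lambda}\vac)_{(-1)}\vachalf$ with the product $p_{\lambda}\cdot L_{-1}^0 = u_{\lambda} = \alpha^{2m}(L_{\lambda}\vachalf)$, so the difference drops to $G^{2m-1}M$. The only slip is the self-corrected aside asserting that $L_{-n}\vac$ has filtration degree $2(n-1)$ — it is the single mode $\omega_{(-n+1)}\vac$ and lies in $G^2V$ — but your final count $L_{\lambda}\vac \in G^{2m}V$ is the right one.
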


\begin{proof}
  The isomorphism $\gr^G(V) \xrightarrow{\sim} \mathbb{C}[L_{-2}, L_{-3}, \dots]$ maps $L_{\lambda}\vac$ to $p_{\lambda}$.
  The isomorphism $\gr^G(M) \xrightarrow{\sim} \bigoplus_{k \in \mathbb{N}}\mathbb{C}[L_{-2}, L_{-3}, \dots]L_{-1}^k$ maps $\vachalf$ to $L_{-1}^0$ and $L_{\lambda}\vachalf$ to $u_{\lambda}$.
  The equality $p_{\lambda}\cdot L_{-1}^0 = u_{\lambda}$ in $\gr^G(M)$ translates to the equality $(L_{\lambda}\vac)_{(-1)}\vachalf = L_{\lambda}\vachalf + u$ in $M$ for some $u \in G^{2m - 1}M$.
\end{proof}

We know $\gr^G(M)$ is a free module over $\gr^G(V)$
\begin{equation*}
  \gr^G(M) = \bigoplus_{k \in \mathbb{N}}\gr^G(V)L_{-1}^k.
\end{equation*}
For $k \in \mathbb{N}$, we call
\begin{equation*}
  \iota_k: \gr^G(V) \hookrightarrow \gr^G(M)
\end{equation*}
the insertion of $\gr^G(V)$ into the $k$-th component of $\gr^G(M)$.

\begin{lemma}
  \label{lmm:8}
  Let $a \in G^pV$ for some $p \in \mathbb{N}$.
  Then $a_{(-1)}\vachalf \in G^pM$ and
  \begin{equation*}
    \iota_0(\alpha^p(a)) = \alpha^p(a_{(-1)}\vachalf).
  \end{equation*}
\end{lemma}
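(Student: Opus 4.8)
The plan is to unwind the definitions and reduce the claim to the structure of the free module $\gr^G(M)$ over $\gr^G(V)$ described just above. First I would note that $a \in G^pV$ means $a$ is a linear combination of vectors of the form $b = L_{-n_1-2}\dots L_{-n_s-2}\vac$ with $2s \le p$ (here we use that $V = \Vir^{1/2}$ is strongly generated by $\omega = L_{-2}\vac$, so the only generator has conformal weight $2$), and more generally by PBW-type vectors whose $L_0$-weight is at most $p$ — but since $G^pV/G^{p-1}V$ is spanned by symbols of PBW monomials, it suffices by linearity to check the identity on such a monomial $b$ with $\len$ equal to $2s$. For such a $b$, the state-field correspondence in a module gives $b^M_{(-1)} = (L_{-n_1-2}\dots L_{-n_s-2}\vac)^M_{(-1)}$, and by the associativity/$n$-th product formulas for vertex algebra modules this equals, up to terms landing in a lower piece of the filtration, the operator $L^M_{-n_1-2}\dots L^M_{-n_s-2}$ acting on $M$; this is exactly the content already recorded in Lemma 1.7 (applied to $\lambda = [n_1+2,\dots,n_s+2]$) together with Proposition 1.5(v), which guarantees $b^M_{(-1)}\vachalf \in G^pM$.

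Next I would make the identification $\iota_0$ explicit. Under the isomorphism $\gr^G(M) \xrightarrow{\sim} \bigoplus_{k}\mathbb{C}[L_{-2},L_{-3},\dots]L_{-1}^k$ and $\gr^G(V)\xrightarrow{\sim}\mathbb{C}[L_{-2},L_{-3},\dots]$, the insertion $\iota_0$ is literally the inclusion of the $k=0$ summand, i.e.\ it sends a polynomial $P(L_{-2},L_{-3},\dots)$ to the same polynomial times $L_{-1}^0$. So for $b$ as above, $\iota_0(\alpha^p(b))$ corresponds to the monomial $p_{[n_1+2,\dots,n_s+2]} \cdot L_{-1}^0 = u_{[n_1+2,\dots,n_s+2]}$. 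On the other hand, by Lemma 1.7 we have $b^M_{(-1)}\vachalf = L_{[n_1+2,\dots,n_s+2]}\vachalf + u$ with $u \in G^{2s-1}M$, so $\alpha^{2s}(b^M_{(-1)}\vachalf) = \alpha^{2s}(L_{[n_1+2,\dots,n_s+2]}\vachalf)$, and under the module isomorphism of Proposition 1.4 this is precisely $u_{[n_1+2,\dots,n_s+2]}$. Comparing the two sides gives $\iota_0(\alpha^p(b)) = \alpha^p(b^M_{(-1)}\vachalf)$ for all monomials $b$, and extending by linearity over all of $G^pV$ yields the lemma. One subtlety to handle carefully: if $2s < p$, then $\alpha^p(b)=0$ in $G^pV/G^{p-1}V$ while $\alpha^p(b^M_{(-1)}\vachalf)$ should also vanish in $G^pM/G^{p-1}M$ — this is consistent because $b^M_{(-1)}\vachalf \in G^{2s}M \subseteq G^{p-1}M$, so both sides are $0$ and the identity holds trivially; thus one really only needs the case $2s = p$.

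The main obstacle is bookkeeping rather than conceptual: one must check that the vertex-algebra module axioms convert the iterated $(-1)$-product $(L_{-n_1-2}\dots L_{-n_s-2}\vac)^M_{(-1)}$ into the composition $L^M_{-n_1-2}\dots L^M_{-n_s-2}$ modulo $G^{p-1}M$, with all correction terms provably in the lower filtration piece. This is exactly the computation behind Lemma 1.7, so I would invoke that lemma directly rather than redo it; the only new point is tracking that the principal symbol map $\alpha^p$ intertwines $\iota_0$ with the operation $a \mapsto a^M_{(-1)}\vachalf$, which is immediate once the free-module description of $\gr^G(M)$ and the compatibility in Proposition 1.4 are in hand. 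A clean way to phrase the argument is: both $\iota_0 \circ \alpha^p$ and $\alpha^p \circ ((-)^M_{(-1)}\vachalf)$ are linear maps $G^pV \to \gr^G(M)$ that agree on the spanning PBW monomials, hence agree.
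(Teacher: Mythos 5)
Your proposal is correct and follows the same route as the paper, which simply deduces the lemma from Lemma \ref{lmm:7} together with \Cref{prp:2}(v) and the free-module description of $\gr^G(M)$; your write-up just makes the linearity reduction to PBW monomials and the case $2s<p$ explicit. (One small slip in an aside: $G^pV$ is spanned by monomials with length $2s\le p$, not by vectors of $L_0$-weight at most $p$ --- but your actual argument uses the correct characterization.)
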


\begin{proof}
  The result follows immediately from \zcref{lmm:7}.
\end{proof}

We have a natural quotient map
\begin{align*}
  \pi_0: V &\twoheadrightarrow \Vir_{3, 4}, \\
  \pi_0(a) &= a + U(\Vir)\{a_{3, 4}\},
\end{align*}
where
\begin{equation*}
  a_{3, 4} = (L_{-2}^3 + \tfrac{93}{64}L_{-3}^2 - \tfrac{27}{16}L_{-6} - \tfrac{33}{8}L_{-4}L_{-2})\vac
\end{equation*}
is the singular vector of $V$ that generates its maximal proper ideal as in \cite{andrews_singular_2022}.
Applying the functor $\gr^G$, we obtain an epimorphism of $\mathbb{N}$-graded vertex Poisson algebras
\begin{equation*}
  \gr^G(\pi_0): \gr^G(V) \twoheadrightarrow \gr^G(\Vir_{3, 4}).
\end{equation*}
We set $I = \ker(\gr^G(\pi_0))$, following the notation of \cite{andrews_singular_2022}, and we recall the definition of $K$ given in \eqref{eq:5}.

\begin{lemma}
  \label{lmm:9}
  We have the inclusion
  \begin{equation*}
    \iota_0(I) \subseteq K.
  \end{equation*}
\end{lemma}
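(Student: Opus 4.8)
The plan is to reduce the statement to a compatibility between the quotient maps $\pi_0 \colon V \twoheadrightarrow \Vir_{3,4}$ and $\pi \colon M \twoheadrightarrow L$ at the level of the module structure, and then pass to associated graded objects. The key point is that $W = J(1/2, 1/2)$ contains the image of the maximal proper ideal of $V$ acting on the highest weight vector. Concretely, the singular vector $a_{3,4} \in V$ generates the ideal $\ker(\pi_0) = U(\Vir)\{a_{3,4}\}$, and since $L = L(1/2,1/16)$-type modules are modules over $\Vir_{3,4}$ (the action of $V$ on $L$ factors through $\pi_0$), we must have $(a_{3,4})^M_{(-1)}\vachalf \in W$, because this vector lies in $\ker(M \to L)$; equivalently, every element of $\ker(\pi_0)$ annihilates $\vachalf + W$ in $L$. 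This gives the ``seed'' inclusion; the full statement then follows by spreading it out using $\gr^G(V)$-linearity.

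The steps, in order, are as follows. First I would recall that $I = \ker(\gr^G(\pi_0))$ is generated as an ideal of $\gr^G(V)$ (a differential commutative algebra $\cong \mathbb{C}[L_{-2}, L_{-3}, \dots]$) by the principal symbol $\alpha(a_{3,4})$ together with its $\partial$-derivatives, since $\ker(\pi_0)$ is the $V$-ideal generated by $a_{3,4}$ and $\gr^G$ of a generated ideal is the ideal generated by the symbols (this uses that $\gr^G(V)$ is as in the excerpt, with the structure computed there). Second, by \Cref{lmm:8}, $\iota_0(\alpha^p(a)) = \alpha^p(a_{(-1)}\vachalf)$ for $a \in G^pV$; in particular $\iota_0(\alpha(a_{3,4})) = \alpha((a_{3,4})^M_{(-1)}\vachalf)$. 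Third, I would verify the seed: $(a_{3,4})^M_{(-1)}\vachalf \in W$, hence $\iota_0(\alpha(a_{3,4})) \in K$, where one uses that $K = \ker(\gr^G(\pi))$ and that $\gr^G(\pi)(\alpha^p(u)) = \alpha^p(\pi(u)) = \alpha^p(u + W)$, so an element of $M$ lying in $W$ has principal symbol in $K$. Fourth, I would upgrade from the single generator to the whole ideal: $K$ is a $\gr^G(V)$-submodule of $\gr^G(M)$ (being the kernel of a $\gr^G(V)$-module map), and $\iota_0$ is a map of $\gr^G(V)$-modules from $\gr^G(V)$ into the degree-zero free summand $\gr^G(V) L_{-1}^0$, so $\iota_0$ of the ideal generated by $\alpha(a_{3,4})$ lands in $K$ provided $K$ is also stable under the derivation $\partial$ (to absorb the $\partial$-derivatives of the generator). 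Stability of $K$ under $T = L_{-1}$ on $\gr^G(M)$ follows because $\pi$ is a homomorphism of $V$-modules and hence commutes with the translation operator, so $\gr^G(\pi)$ commutes with $T$ and its kernel is $T$-stable. Combining these, $\iota_0(I) \subseteq K$.

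The main obstacle I anticipate is the second and fourth steps together: making precise that $\gr^G$ of the $V$-ideal $U(\Vir)\{a_{3,4}\}$ is exactly the $\gr^G(V)$-ideal (closed under $\partial$) generated by $\alpha(a_{3,4})$, and that $\iota_0$ intertwines the $\gr^G(V)$-actions and the derivations in the way needed. The subtlety is that $\iota_0$ is $\gr^G(V)$-linear for the commutative product but one must check it is also compatible with $\partial$ up to the bookkeeping of the extra $L_{-1}$ factors — i.e. that $\iota_0(\partial x)$ and the ``module derivation'' of $\iota_0(x)$ agree modulo lower filtration pieces, which is where \Cref{rmk:1} (failure of $L_{-1}^M(G^pM) \subseteq G^pM$) could in principle cause trouble. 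I expect this to be handled cleanly by working with the explicit isomorphism $\gr^G(M) \cong \bigoplus_k \mathbb{C}[L_{-2}, L_{-3}, \dots] L_{-1}^k$ from \Cref{prp:4}, under which $\iota_0$ is literally the inclusion of the $k = 0$ summand and $K$ becomes an honest ideal-like submodule; then the claim $\iota_0(I) \subseteq K$ reduces to the seed inclusion $\alpha(a_{3,4}) \in$ (the $k=0$ part of $K$), i.e. to the singular-vector computation $(a_{3,4})^M_{(-1)}\vachalf \in W$, which is the genuine input.
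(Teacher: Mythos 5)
Your key input is the right one and coincides with the paper's: since $L$ is a module over $\Vir_{3,4}$ (Wang's theorem), the action of $V$ on $L$ factors through $\pi_0$, so every element of $\ker(\pi_0) = U(\Vir)\{a_{3,4}\}$ sends $\vachalf$ into $W$, and \Cref{lmm:8} converts this into membership of $K$. The genuine gap is in your fourth step, the passage from the single generator to all of $I$. The ideal $I$ is generated by $\alpha(a_{3,4})$ only as a \emph{differential} ideal, so besides the seed you need $\iota_0(\partial^k\alpha(a_{3,4})) \in K$ for every $k$, and the mechanism you propose --- stability of $K$ under the translation operator of $\gr^G(M)$ --- does not deliver this. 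Under the identification $\gr^G(M) \cong \bigoplus_{k}\mathbb{C}[L_{-2}, L_{-3}, \dots]L_{-1}^k$, the operator induced by $L_{-1}^M$ is $(L_{-2})_{(0)}$, and it sends $\iota_0(x) = x\cdot L_{-1}^0$ to $x\cdot L_{-1}$ in the $k = 1$ summand (this is precisely the content of \Cref{rmk:1}, \Cref{rmk:4} and the computation $(L_{-2})_{(0)}u_{\lambda} = u_{[\lambda, 1]}$), not to $\iota_0(\partial x)$. So $T$-stability of $K$ produces new elements of $K$ in the wrong summand, and your hoped-for reduction ``to the seed inclusion alone'' is not available: the seed $\iota_0(\alpha(a_{3,4})) \in K$ together with $\gr^G(V)$-linearity only covers the plain ideal generated by $\alpha(a_{3,4})$, which is strictly smaller than $I$.

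The repair is cheap and is exactly the paper's route: do not single out the generator. For \emph{every} $a \in U(\Vir)\{a_{3,4}\}$ --- in particular for every $T^ka_{3,4}$ --- the factorization of $Y^L_V$ through $Y^L_{\Vir_{3,4}}$ gives $a_{(-1)}\vachalf \in W$ directly; combined with the descriptions $I = \sum_{p}\alpha^p(U(\Vir)\{a_{3,4}\} \cap G^pV)$ and $K = \sum_p\alpha^p(W \cap G^pM)$ and with \Cref{lmm:8}, this yields $\iota_0(I) \subseteq K$ in one stroke, with no appeal to the differential or Poisson structure of either side. (A minor typo: in this section you mean $L = L(1/2, 1/2)$, not $L(1/2, 1/16)$.)
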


\begin{proof}
  We can consider $L$ as a module over $V$ with state-field correspondence map $Y^L_{V}: V \to \mathcal{F}(L)$.
  By \cite[Theorem 4.2]{wang_rationality_1993}, $L$ is a module over $\Vir_{3,4}$ with state-field correspondence map $Y^L_{\Vir_{3, 4}}: \Vir_{3, 4} \to \mathcal{F}(L)$ such that the following diagram commutes
  \begin{equation*}
    \begin{tikzcd}
      V \arrow[r, "\pi_0", two heads] \arrow[rd, "{Y^{L}_{V}}"'] & {\Vir_{3, 4}} \arrow[d, "{Y^{L}_{\Vir_{3, 4}}}"] \\
      & {\mathcal{F}(L)}
    \end{tikzcd}
  \end{equation*}
  The commutativity of this diagram implies the following statement
  \begin{equation*}
    \text{for $a \in U(\Vir)\{a_{3, 4}\}$, $u \in M$ and $n \in \mathbb{Z}$, $a_{(n)}u \in W$},
  \end{equation*}
  where $W$ is defined in \eqref{eq:3}.
  We use this statement in the following simplified form
  \begin{equation}
    \label{eq:6}
    \text{for $a \in U(\Vir)\{a_{3, 4}\}$, $a_{(-1)}\vachalf \in W$}.
  \end{equation}

  We note that
  \begin{equation*}
    I = \sum_{p \in \mathbb{N}}\alpha^p(U(\Vir)\{a_{3, 4}\} \cap G^pV).
  \end{equation*}
  Similarly, we have a formula for $K$
  \begin{equation*}
    K = \sum_{p \in \mathbb{N}}\alpha^p(W \cap G^pM).
  \end{equation*}
  Let $\alpha^p(a) \in I$ with $a \in U(\Vir)\{a_{3, 4}\} \cap G^pV$.
  By \zcref{lmm:8} and \eqref{eq:6}, we have $a_{(-1)}\vachalf \in W \cap G^pM$ and also $\iota_0(\alpha^p(a)) = \alpha^p(a_{(-1)}\vachalf) \in K$, finishing the proof.
\end{proof}

\begin{remark}
  \label{rmk:5}
  The proof of \zcref{lmm:9} also works for other values of $c = c_{p, q} = 1 - \frac{6(p - q)^2}{pq}$ and $h = h_{m, n} = \frac{(np - mq)^2 - (p - q)^2}{4pq}$, where $p, q \ge 2$ are relatively prime integers, and $m, n$ are integers satisfying $0 < m < p$ and $0 < n < q$, see \cite{wang_rationality_1993}.
\end{remark}

We need to compute all leading monomials of elements of $K$.
To do this, we need to order the PBW basis of $U(\Vir_{\le -1}) = \vspan\{L_{\lambda} \mid \text{$\lambda$ is a partition}\}$ first by length, then by degree reverse lexicographic order and finally by position.
Formally, for any partitions $\lambda$ and $\eta$, we define
\begin{equation*}
  L_{\lambda} \le L_{\eta}\text{ if and only if }
  \begin{cases}
    \len(\lambda) < \len(\eta)\text{ or } \\
    \len(\lambda) = \len(\eta)\text{ and }u_{\lambda} \le u_{\eta}.
  \end{cases}
\end{equation*}
For $x \in U(\Vir_{\le -1})$ with $x \neq 0$, we may write
\begin{equation*}
  x = c_1L_{\lambda_1} + c_2L_{\lambda_2} + \dots + c_rL_{\lambda_r},
\end{equation*}
where for $1 \le i \le r$, $0 \neq c_i \in \mathbb{C}$ and $L_{\lambda_1} > L_{\lambda_2} > \dots > L_{\lambda_r}$.
We define the \emph{leading monomial of $x$} as $\lm(x) = L_{\lambda_1}$.
We set $\lm(0) = 0$.
Next, we extend the definition of $\lm$ from $U(\Vir_{\le -1})$ to $M$ by considering the isomorphism of vector spaces $U(\Vir_{\le -1}) \xrightarrow{\sim} M, L_{\lambda} \mapsto L_{\lambda}\vachalf$, where $\lambda$ is a partition.
For example:
\begin{enumerate}
\item We have $\lm(L_{-1}^2 - \frac{3}{4}L_{-2}) = L_{-2}$ because the power product of $L_{-1}^2$ is $1$, the power product of $L_{-1}^0$ is $L_{-2}$, $L_{-2} > 1$ by degree considerations, and we are using TOP with $L_{-1}^0 < L_{-1}^1 < L_{-1}^2$.
  We also have $\lm((L_{-1}^2 - \frac{3}{4}L_{-2})\vachalf) = L_{-2}\vachalf$ because $\len(L_{-1}^2) = \len(L_{-2}) = 2$.
\item We have $\lm(L_{-1}^3 - 3L_{-2}L_{-1} + \frac{3}{4}L_{-3}) = L_{-2}L_{-1}$ because the power product of $L_{-1}^3$ is $1$, the power product of $L_{-1}^1$ is $L_{-2}$, the power product of $L_{-1}^0$ is $L_{-3}$, $L_{-2} > L_{-3} > 1$ by definition and degree considerations, and we are using TOP with $L_{-1}^0 < L_{-1}^1 < L_{-1}^2 < L_{-1}^3$.
  Again, we have $\lm((L_{-1}^3 - 3L_{-2}L_{-1} + \frac{3}{4}L_{-3})\vachalf) = L_{-2}L_{-1}\vachalf$ because $\len(L_{-1}^3) = \len(L_{-2}L_{-1}) = 3$ and $\len(L_{-3}) = 2$.
\item We have $\lm(L_{-1}^4 - 3L_{-3}L_{-1} - 6L_{-4}) = L_{-3}L_{-1}$ because the power product of $L_{-1}^4$ is $1$, the power product of $L_{-1}^1$ is $L_{-3}$, the power product of $L_{-1}^0$ is $L_{-4}$, $L_{-3} > L_{-4} > 1$ by definition and degree considerations, and we are using TOP with $L_{-1}^0 < L_{-1}^1 < L_{-1}^2 < L_{-1}^3 < L_{-1}^4$.
  However, we now have $\lm((L_{-1}^4 - 3L_{-3}L_{-1} - 6L_{-4})\vachalf) = L_{-1}^4\vachalf$ because $\len(L_{-1}^4) = 4$, $\len(L_{-3}L_{-1}) = 3$ and $\len(L_{-4}) = 2$.
  In fact, we have $(L_{-1}^4 - 3L_{-3}L_{-1} - 6L_{-4})\vachalf \in W$, as we will see in a moment.
\end{enumerate}

\begin{remark}
  \label{rmk:6}
  The definition of the order in the PBW basis of $U(\Vir_{\le -1})$ was made so that for a partition $\lambda$ and $u \in M$, if $\lm(u) = L_{\lambda}\vachalf$, then $\lm(\alpha^{\len(\lambda)}(u)) = u_{\lambda}$.

  This order also helps us in computing the exceptional partitions faster, as we do not have to compute Gröbner bases explicitly because it is enough to transform matrices into row reduced echelon form.
\end{remark}

For $n \in \mathbb{N}$, let $p(n)$ denote the number of partitions of $n$.
A basis of $\gr^G(M)_{1/2 + n}$ is given by $\{L_{\lambda}\vachalf \mid \Delta(\lambda) = n\}$ and has $p(n)$ elements.
For $n \in \mathbb{N}$, we compute the matrix $A_n$ with $p(n - 2) + p(n - 3)$ rows and $p(n)$ columns, which is given by stacking the matrix $A^{m_3}_n$ below the matrix $A^{m_2}_n$.
The matrix $A^{m_k}_n$ is given by
\begin{equation*}
  A^{m_k}_n(i, j) = \text{coefficient of $L_{\lambda_j}\vachalf$ in $L_{\lambda_i}u_k$},
\end{equation*}
for $1 \le i \le p(n - k)$, $1 \le j \le p(n)$ and $k = 2, 3$, where $\lambda_1, \dots, \lambda_{p(n)}$ are the partitions of $n$ ordered in such a way that $L_{\lambda_1} > L_{\lambda_2} > \dots > L_{\lambda_{p(n)}}$, and $u_2, u_3$ are given by \eqref{eq:4}.
We now transform $A_n$ into row reduced echelon form, obtaining a matrix $A^W_n$ which has an unknown number of nonzero rows and $p(n)$ columns.
For each pivot $\lambda$ of $A^W_n$, let $u^W_{\lambda}$ be the element of $\gr^G(M)$ corresponding to the row which has $L_{\lambda}\vachalf$ as pivot.
In other words, $u^W_{\lambda} = L_{\lambda}\vachalf + \text{(lower order terms)}$.
We set $u^K_{\lambda} = \alpha^{\len(\lambda)}(u^W_{\lambda})$ for each pivot $\lambda$ of $A^W_n$.

\begin{remark}
  \label{rmk:7}
  By construction and \zcref{rmk:6}, for each pivot $\lambda$ of $A^W_n$, we have $u^W_{\lambda} \in W$, $u^K_{\lambda} \in K$ and $\lm(u^K_{\lambda}) = u_{\lambda}$.
\end{remark}

For example, when $n = 4$, the partitions are ordered in the following way
\begin{equation*}
  [[2, 2], [2, 1, 1], [1, 1, 1, 1], [3, 1], [4]],
\end{equation*}
the matrix $A^W_4$ is given by
\begin{equation*}
  A^W_4=
  \begin{pmatrix}
    1 & 0 & 0 & -\frac{3}{16} & -\frac{15}{8} \\
    0 & 1 & 0 & -\frac{1}{4} & -\frac{5}{2} \\
    0 & 0 & 1 & -3 & -6
  \end{pmatrix},
\end{equation*}
and the pivots are $[2, 2]$, $[2, 1, 1]$ and $[1, 1, 1, 1]$.
Therefore:
\begin{align*}
  u^W_{[2, 2]} &= (L_{[2, 2]} - \tfrac{3}{16}L_{[3, 1]} - \tfrac{15}{8}L_{[4]})\vachalf, &u^K_{[2, 2]} &= L_{-2}L_{-2}, \\
  u^W_{[2, 1, 1]} &= (L_{[2, 1, 1]} - \tfrac{1}{4}L_{[3, 1]} - \tfrac{5}{2}L_{[4]})\vachalf, &u^K_{[2, 1, 1]} &= L_{-2}L_{-1}^2, \\
  u^W_{[1, 1, 1, 1]} &= (L_{[1, 1, 1, 1]} - 3L_{[3, 1]} - 6L_{[4]})\vachalf, &u^K_{[1, 1, 1, 1]} &= L_{-1}^4.
\end{align*}

The partitions $[2]$, $[1, 1, 1]$, $[3, 1, 1]$, $[3, 3]$, $[4, 3, 1]$, $[4, 4, 1]$, $[5, 4, 1, 1]$ and $[6, 5, 3, 1]$ are pivots of the matrices $A^W_2, A^W_3$, $A^W_5$, $A^W_6$, $A^W_8$, $A^W_9$, $A^W_{11}$ and $A^W_{15}$ respectively, see \cite[ising-modules.ipynb]{sagemath2}.
We set
\begin{equation*}
  K' = (u^K_{[2]}, u^K_{[1, 1, 1]}, u^K_{[3, 1, 1]}, u^K_{[3, 3]}, u^K_{[4, 3, 1]}, u^K_{[4, 4, 1]}, u^K_{[5, 4, 1, 1]}, u^K_{[6, 5, 3, 1]}, \iota_0(I))_{\psn},
\end{equation*}
where $\psn$ denotes the Poisson submodule generated by the given subset.
By \zcref{lmm:9}, \zcref{rmk:7} and the fact that $K$ is a vertex Poisson submodule (not just a submodule), we have
\begin{equation*}
  K' \subseteq K.
\end{equation*}

Let us consider the Poisson structure of $\gr^G(M)$ as a module over $\gr^G(V)$.
We have
\begin{align*}
  (L_{-2})_{(0)}(u_{\lambda}) &= \alpha^2(\omega)_{(0)}\alpha^{\len(\lambda)}(L_{\lambda}\vachalf) \\
                              &= \alpha^{\len(\lambda) + 1}(L_{-1}L_{\lambda}\vachalf) \\
                              &= u_{[\lambda, 1]},
\end{align*}
where $\lambda$ is any partition, and $[\lambda, 1]$ denotes the partition $\lambda$ with a $1$ appended at the end.

\begin{remark}
  \label{rmk:8}
  While the Poisson structure of $\gr^G(V)$ is trivial (i.e., is zero), the Poisson structure of $\gr^G(M)$ is not.
  In fact, if $u \in \gr^G(M)$, and $\lambda$ is a partition, then
  \begin{equation*}
    \lm(u) = u_{\lambda}\text{ implies }\lm((L_{-2})_{(0)}(u)) = u_{[\lambda, 1]}.
  \end{equation*}
\end{remark}

Let $\overline{R}$ be the set of partitions containing some partition of $R$, i.e.,
\begin{equation*}
  \overline{R} = \{\lambda \mid \text{there is some partition $\eta \in R$ such that $\lambda \supseteq \eta$}\}.
\end{equation*}

\begin{lemma}
  \label{lmm:10}
  For $\lambda \in \overline{R}$, there exists $u \in K'$ such that $\lm(u) = u_{\lambda}$.
\end{lemma}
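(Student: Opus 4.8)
The plan is to leverage the Poisson module structure of $\gr^G(M)$ to reduce the claim for all of $\overline{R}$ to the finitely many generators of $\overline{R}$ under the relevant operations. Recall from \Cref{rmk:8} that if $\lm(u) = u_\mu$ then $\lm((L_{-2})_{(0)}(u)) = u_{[\mu, 1]}$, and since $K'$ is a Poisson submodule, $(L_{-2})_{(0)}(u) \in K'$ whenever $u \in K'$. More generally, one should establish an analogous ``leading monomial'' rule for the action of $\gr^G(V) = \mathbb{C}[L_{-2}, L_{-3}, \dots]$: if $\lm(u) = u_\mu$, then $\lm(p_\eta \cdot u) = u_{\eta \cup \mu}$, where $\eta \cup \mu$ denotes the partition obtained by merging the parts of $\eta$ (all $\ge 2$) with those of $\mu$ and re-sorting. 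This is immediate from the definition of the monomial order (degree reverse lexicographic with TOP) together with the fact that multiplication by $p_\eta$ in the free module $\bigoplus_k \mathbb{C}[L_{-2}, \dots]L_{-1}^k$ simply multiplies each monomial, so the leading monomial is multiplied. Combining this with the derivation $(L_{-2})_{(0)}$ that appends $1$'s, we get: if $u_\mu = \lm(u)$ for some $u \in K'$, then for every partition $\nu$ obtained from $\mu$ by (a) prepending some parts $\ge 2$ and (b) appending some $1$'s, there is $u' \in K'$ with $\lm(u') = u_\nu$.

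Next I would observe that $\overline{R}$ is generated, under the operations ``prepend parts $\ge 2$'' and ``append $1$'s'' and ``insert in the middle''—i.e., under $\eta \subseteq \lambda$—from the partitions in $R$ itself. But $\eta \subseteq \lambda$ is more than just prepending/appending; it allows insertion of a contiguous subword anywhere. So I would strengthen the leading-monomial rule once more: I claim that if $u_\mu = \lm(u)$ for $u \in K'$ and $\lambda \supseteq \mu$ (i.e.\ $\mu$ appears as a contiguous subword of $\lambda$, say $\lambda = [\lambda_1, \dots, \lambda_{i-1}, \mu_1, \dots, \mu_\ell, \lambda_{i+\ell}, \dots, \lambda_m]$), then there is $u' \in K'$ with $\lm(u') = u_\lambda$. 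The parts $\lambda_1, \dots, \lambda_{i-1}$ to the left are all $\ge \mu_1 \ge 2$, so they can be supplied by multiplication by $p_{[\lambda_1, \dots, \lambda_{i-1}]} \in \gr^G(V)$; the parts to the right split as those $\ge 2$ (again handled by $\gr^G(V)$-multiplication, which places them correctly after re-sorting since they are $\le \mu_\ell$) and the $1$'s at the very end (handled by iterating $(L_{-2})_{(0)}$). One must check the re-sorting does not disturb that the leading monomial is exactly $u_\lambda$; this follows because all the newly inserted parts are uniquely placed relative to the parts of $\mu$ by the ordering constraints $\lambda_1 \ge \dots \ge \lambda_m$.

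The final step is then purely a finite verification: for each of the (finitely many families of) partitions $\eta \in R$, exhibit a single $u \in K'$ with $\lm(u) = u_\eta$. For the eight exceptional partitions $[2], [1,1,1], [3,1,1], [3,3], [4,3,1], [4,4,1], [5,4,1,1], [6,5,3,1]$, these are exactly the generators $u^K_{[2]}, u^K_{[1,1,1]}, \dots, u^K_{[6,5,3,1]}$ placed into $K'$ by construction, with $\lm(u^K_\eta) = u_\eta$ by \Cref{rmk:7}. For the ordinary (one-parameter) families in $R$, I would produce each leading monomial from the generators $\iota_0(I)$ together with the exceptional generators: since $I = \ker(\gr^G(\pi_0))$ is the ideal of the Ising model $\gr^G(\Vir_{3,4})$, whose leading-term ideal is known from \cite{andrews_singular_2022} (the partitions in $R^0$), applying $\iota_0$ and then the above prepend/append machinery yields $u_\lambda$ for each ordinary $\lambda \in R$; one checks family-by-family that the required ``seed'' partition from $R^0$ (or from the eight new exceptional partitions) sits as a contiguous subword of each member of the family with the correct offsets in $r$. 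The main obstacle is precisely this bookkeeping: matching each ordinary family of $R^{1/2}$ against an appropriate generator and verifying the containment-of-subword relation uniformly in $r$, keeping track of how the ``$+2$ shift'' (the $\vachalf$ contributing $\Delta = 1/2$, versus $\Delta = 0$ for the vacuum) changes the ranges of $r$ relative to $R^0$; the Poisson/leading-monomial formalism itself is routine once set up.
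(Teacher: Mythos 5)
Your proposal is correct and follows essentially the same route as the paper's proof: reduce the claim for $\lambda \in \overline{R}$ to the base case $\lambda \in R$ by exploiting that $\lm$ is multiplicative under multiplication by power products $p_\tau \in \gr^G(V)$ and that $(L_{-2})_{(0)}$ appends ones (\Cref{rmk:8}), and settle the base case via the eight exceptional generators of $K'$ together with $\iota_0(I)$ and the known leading terms of $I$ from \cite[Proposition 5.1]{andrews_singular_2022}. The only simplification you missed is that the ordinary families of $R^{1/2}$ (with $r \ge 3$) are literally members of $R^0$ (which allows $r \ge 2$), so the family-by-family subword bookkeeping you flag as the main obstacle is in fact immediate.
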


\begin{proof}
  By the definition of $K'$ and \cite[Proposition 5.1]{andrews_singular_2022}, we know that for $\lambda \in R$, there exists $u \in K'$ such that $\lm(u) = u_{\lambda}$.
  We now assume $\lambda \in \overline{R}$, which means there is $\eta \in R$ such that $\lambda \supseteq \eta$.
  Therefore, $\lambda$ is obtained from $\eta$ by adding some integers greater than or equal to two and adding $k$ ones.
  We pick $v \in K'$ such that $\lm(v) = u_{\eta}$ and some power product $p_{\tau} \in \mathbb{C}[L_{-2}, L_{-3}, \dots]$, for some partition $\tau$, such that $\lm((L_{-2})^k_{(0)}(p_{\tau}v)) = u_{\lambda}$ (this can be done because of \zcref{rmk:8} and the fact that $\lm$ is multiplicative).
  Thus, we set $u = (L_{-2})^k_{(0)}(p_{\tau}v) \in K'$ to get $\lm(u) = u_{\lambda}$.
  For example, if we take $\lambda = [3, 2, 1, 1] \in \overline{R}$, then $\eta = [2] \in R$.
  In this case, $v = L_{-2}- 3/4L_{-1}^2 \in K'$ is such that $\lm(v) = u_{[2]}$, and we take $u = L_{-3}L_{-2}L_{-1}^2 - 3/4L_{-3}L_{-1}^4 \in K'$.
\end{proof}

\begin{remark}
  \label{rmk:9}
  For any partitions $\lambda$ and $\eta$ with the same number of ones, if $\lambda \supseteq \eta$, then $u_{\eta} \mid u_{\lambda}$.
  The converse is not true.
  For example, $u_{[4, 2]} \mid u_{[4, 3, 2]}$, but $[4, 3, 2] \nsupseteq [4, 2]$.
  However, if $\eta = [\eta_1, \dots, \eta_m, 1, \dots, 1]$ with $\eta_m \ge 2$ and $\eta_1 - \eta_m \le 1$, then $\lambda \supseteq \eta$ if and only if $u_{\eta} \mid u_{\lambda}$, provided $\lambda$ and $\eta$ have the same number of ones.
\end{remark}

\begin{lemma}
  \label{lmm:11}
  There is an alternative description for $P$, namely
  \begin{equation*}
    P = \{\lambda \mid \text{for $\eta \in \overline{R}$, $u_{\eta} \nmid u_{\lambda}$}\}.
  \end{equation*}
\end{lemma}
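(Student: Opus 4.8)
The plan is to establish the two inclusions separately, with the nontrivial work packaged into a single claim about the partitions of $R$. The inclusion $P \supseteq \{\lambda \mid u_\eta \nmid u_\lambda \text{ for all } \eta \in \overline{R}\}$ is immediate: if $\lambda \notin P$ then $\lambda \in \overline{R}$ by the definition of $\overline{R}$, and taking $\eta = \lambda$ shows $u_\eta \mid u_\lambda$, so $\lambda$ fails the condition on the right. For the reverse inclusion I would argue contrapositively. Suppose $u_\eta \mid u_\lambda$ for some $\eta \in \overline{R}$, and pick $\rho \in R$ with $\rho \subseteq \eta$; since a consecutive subsequence is in particular a sub-multiset, $u_\rho \mid u_\eta \mid u_\lambda$. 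Thus everything reduces to the \emph{key claim}: for every $\rho \in R$, if $u_\rho \mid u_\lambda$ then $\lambda \in \overline{R}$ (whence $\lambda \notin P$).

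To prove the key claim I would split $R$ in two. First take $\rho$ that is a string of ones, or of the form $[\rho_1, \dots, \rho_m, 1, \dots, 1]$ with $\rho_m \ge 2$ and $\rho_1 - \rho_m \le 1$, i.e. those to which \Cref{rmk:9} applies; in $R^{1/2}$ these are the families $[r, r, r]$, $[r+1, r, r]$, $[r+1, r+1, r]$ and all of the exceptional partitions except $[6, 5, 3, 1]$. Here I would pad $\rho$ with extra ones until its number of ones equals that of $\lambda$ — legitimate because $u_\rho \mid u_\lambda$ forces $\lambda$ to have at least as many ones — observe that the padded partition $\eta'$ still satisfies the hypothesis of \Cref{rmk:9} and still divides $u_\lambda$, and conclude $\lambda \supseteq \eta' \supseteq \rho$, so $\lambda \in \overline{R}$.

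The remaining $\rho \in R^{1/2}$, namely $[r+2, r+1, r]$, $[r+2, r+2, r]$, $[r+2, r, r]$, $[r+3, r+3, r, r]$, $[r+4, r+3, r, r]$, $[r+4, r+3, r+1, r]$, $[r+4, r+4, r+1, r]$, $[r+6, r+5, r+3, r+1, r]$ (all $r \ge 3$), together with $[6, 5, 3, 1]$, have a gap $\ge 2$ between their largest and smallest parts, so \Cref{rmk:9} does not apply directly. For these, given $u_\rho \mid u_\lambda$, I would look at the block $w$ of $\lambda$ consisting of all parts whose value lies between $\min\rho$ and $\max\rho$; it is a consecutive block of $\lambda$ because $\lambda$ is weakly decreasing, so it suffices to exhibit a partition from $R$ inside $w$. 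The block $w$ consists of the values of $[\min\rho,\max\rho]$ occurring in $\lambda$, each with its multiplicity $a_v$, and $a_v \ge m_v(\rho)$ (the multiplicity of $v$ in $\rho$). If every $a_v$ attains its lower bound and no value skipped by $\rho$ occurs in $\lambda$, then $w = \rho$ itself; otherwise some value occurs ``too often'' or a skipped value appears, and one collapses the offending window to one of the short dense blocks $[v, v, v]$, $[v+1, v, v]$, $[v+1, v+1, v]$, $[v+2, v+1, v]$, $[v+2, v+2, v]$, $[v+2, v, v]$ with $v \ge 3$ — precisely the first six families of $R$ — so $\lambda \in \overline{R}$ either way; for $[6, 5, 3, 1]$ one additionally invokes $[2], [1, 1, 1], [3, 3], [3, 1, 1] \in R$ to absorb the small parts. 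Concretely, for $\rho = [r+2, r+1, r]$: if $\lambda$ has exactly one part equal to $r+1$ then the last $r+2$, that $r+1$ and the first $r$ are consecutive in $\lambda$, so $\rho \subseteq \lambda$; if $\lambda$ has at least two parts equal to $r+1$ then the last two of them together with the first $r$ give $[r+1, r+1, r] \subseteq \lambda$; and three parts equal to $r$, to $r+1$ or to $r+2$ give $[r, r, r]$, $[r+1, r+1, r+1]$ or $[r+2, r+2, r+2]$. The other families are handled by the same bookkeeping. I expect this finite case check — uniform in $r$, with a handful of subcases per family — to be the only real obstacle; each individual subcase is routine, and the statements for $R^0$ and $R^{1/16}$ are proved the same way.
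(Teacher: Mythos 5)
Your proposal is correct and takes essentially the same route as the paper: reduce from $\overline{R}$ to $R$, dispatch the partitions with $\rho_1-\rho_m\le 1$ (after padding with ones to match the number of ones of $\lambda$) via \Cref{rmk:9}, and handle the sparse families by examining the consecutive block of $\lambda$ between $\min\rho$ and $\max\rho$ and locating one of the three-part dense patterns — exactly the analysis the paper carries out for $[r+2,r+1,r]$ and leaves to the reader for the rest; your handling of the easy direction (take $\eta=\lambda$) is in fact slicker than the paper's. One caution: with the paper's TOP convention, divisibility of monomials in $\bigoplus_k\mathbb{C}[L_{-2},L_{-3},\dots]L_{-1}^k$ requires \emph{equal} powers of $L_{-1}$, so the literal chain $u_\rho\mid u_\eta$ fails when $\rho$ has fewer ones than $\eta$ (e.g.\ $u_{[2]}\nmid u_{[2,1]}$); your padding step shows you are really tracking containment of the multiset of parts, which is what the argument needs, so this is only a notational slip to fix when writing the key claim.
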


\begin{proof}
  We have to prove the following equality
  \begin{equation*}
    \{\lambda \mid \text{for $\eta \in R$, $\lambda \nsupseteq \eta$}\} = \{\lambda \mid \text{for $\eta \in \overline{R}$, $u_{\eta} \nmid u_{\lambda}$}\}.
  \end{equation*}

  First, we prove the inclusion $(\supseteq)$.
  We assume $\lambda$ belongs to the right set, and $\lambda \supseteq \eta$ for some $\eta \in R$.
  Then $\lambda$ has $k$ more ones than $\eta$ for some $k \in \mathbb{N}$.
  By \zcref{rmk:9}, $u_{[\eta, 1, \dots, 1]} \mid u_{\lambda}$, where we added $k$ ones to $\eta$, a contradiction.
  Therefore, $\lambda \nsupseteq \eta$ for $\eta \in R$, which means $\lambda$ belongs to the left set.

  We now prove the inclusion $(\subseteq)$.
  We assume $\lambda$ belongs to the left set.
  Then $\lambda$ has $0$, $1$ or $2$ ones.
  We assume $\lambda$ has $0$ ones.
  It is enough to prove that for $\eta \in R$ with $0$ ones, $u_{\eta} \nmid u_{\lambda}$.
  By \zcref{rmk:9}, if $\eta$ is equal to $[2]$, $[3, 3]$, $[r, r, r]$, $[r + 1, r, r]$ or $[r + 1, r + 1, r]$ for some $r \ge 3$, then $u_{\eta} \nmid u_{\lambda}$.
  We assume $\eta = [r + 2, r + 1, r]$ for some $r \ge 3$ and $u_{\eta} \mid u_{\lambda}$.
  Then $\lambda$ contains $[r + 2, r + 1, \dots, r + 1, r]$, where $r + 1$ appears $k \ge 1$ times.
  If $k \ge 2$, then $\lambda$ contains $[r + 1, r + 1, r]$, which is not possible.
  If $k = 1$, then $\lambda$ contains $[r + 2, r + 1, r]$, which is not possible.
  Therefore, we cannot have $u_{\eta} \mid u_{\lambda}$.
  Continuing this way, we obtain that $u_{\eta} \nmid u_{\lambda}$ for $\eta \in R$ with $0$ ones.
  The same argument can be applied when $\lambda$ has $1$ or $2$ ones.
  Therefore, $\lambda$ belongs to the right set.
\end{proof}

\section{Proofs of the main theorems}
\label{sec:proofs-main-theorems}

\begin{proof}[Proof of \zcref{thr:4}]
  Since we cannot apply Gröbner basis theory directly on the free module $\bigoplus_{k \in \mathbb{N}}\mathbb{C}[L_{-2}, L_{-3}, \dots]L_{-1}^k$, we need to truncate somehow.
  For $N \in \mathbb{N}$, we define:
  \begin{align*}
    \gr^G(M)_{\le 1/2 + N} &= \sum_{n \le N}\gr^G(M)_{1/2 + n}, \\
    \gr^G(L)_{\le 1/2 + N} &= \sum_{n \le N}\gr^G(L)_{1/2 + n}.
  \end{align*}
  We note that $\gr^G(M)_{\le 1/2 + N}$ is a vector subspace of the free module
  \begin{equation*}
    F_N = \bigoplus_{n \le N}\mathbb{C}[L_{-2}, L_{-3}, \dots, L_{-N}]L_{-1}^n
  \end{equation*}
  with base ring $\mathbb{C}[L_{-2}, L_{-3}, \dots, L_{-N}]$ because a basis of $\gr^G(M)_{\le 1/2 + N}$ is given by elements of the form $u_{\lambda}$, with all the elements of $\lambda$ being at most $N$, and with $\lambda$ having at most $N$ ones.
  We note that $K \cap F_N$ is a submodule of $F_N$ for $N \in \mathbb{N}$.

  For $N \in \mathbb{N}$, we have natural vector space isomorphisms
  \begin{equation}
    \label{eq:7}
    \frac{F_N}{K \cap F_N} \supseteq \frac{\gr^G(M)_{\le 1/2 + N}}{K \cap F_N} \xrightarrow{\sim} \frac{\gr^G(M)_{\le 1/2 + N}}{K \cap \gr^G(M)_{\le 1/2 + N}} \xrightarrow{\sim} \gr^G(L)_{\le 1/2 + N}.
  \end{equation}
  Therefore, if we find a basis of each vector space $\gr^G(M)_{\le 1/2 + N}/K \cap F_N \subseteq F_N/K \cap F_N$ such that each basis is contained in the next one when considering the isomorphism \eqref{eq:7}, we get a basis of $\gr^G(L)$ by taking the union of these bases because $\bigcup_{N \in \mathbb{N}}\gr^G(L)_{\le 1/2 + N} = \gr^G(L)$.

  Let $G_N$ be a Gröbner basis of $K \cap F_N$.
  We define
  \begin{equation*}
    B_N = \{u_{\lambda} \mid \text{$\Delta(\lambda) \le N$ and for $u \in G_N$, $\lm(u) \nmid u_{\lambda}$}\}.
  \end{equation*}
  By \cite[Proposition 3.6.4]{adams_introduction_1994},
  \begin{equation}
    \label{eq:8}
    \{u_{\lambda} + K \cap F_N \mid u_{\lambda} \in B_N\}
  \end{equation}
  is a vector space basis of $\gr^G(M)_{\le 1/2 + N}/K \cap F_N$.
  Therefore, by \zcref{lmm:6}, isomorphism \eqref{eq:7} and the equality $\dim(\gr^G(L)_{1/2 + n}) = \dim(L_{1/2 + n})$ for $n \in \mathbb{N}$, we have
  \begin{equation*}
    |B_N| = \sum_{n \le N}\dim(\gr^G(L)_{1/2 + n}) = \sum_{n \le N}\dim(L_{1/2 + n}) = \sum_{n \le N}|P(n)|.
  \end{equation*}

  We define
  \begin{equation*}
    \overline{B_N} = \{u_{\lambda} \mid \text{$\Delta(\lambda) \le N$ and for $\eta \in \overline{R}$, $u_{\eta} \nmid u_{\lambda}$}\}.
  \end{equation*}
  By \zcref{lmm:11}, we also have
  \begin{equation*}
    \overline{B_N} = \{u_{\lambda} \mid \text{$\Delta(\lambda) \le N$ and $\lambda \in P$}\}.
  \end{equation*}
  From the definition of $P(n)$, we see that
  \begin{equation*}
    |\overline{B_N}| = \sum_{n \le N}|P(n)|.
  \end{equation*}

  On the other hand, $B_N \subseteq \overline{B_N}$.
  This is because if $u_{\lambda} \in B_N$ and $\eta \in \overline{R}$, then $u_{\eta} \nmid u_{\lambda}$, as we now show.
  Suppose, for the sake of contradiction, that $u_{\eta} \mid u_{\lambda}$.
  By \zcref{lmm:10}, there would exist $u \in K' \cap F_N \subseteq K \cap F_N$ such that $\lm(u) = u_{\eta}$.
  However, by the definition of a Gröbner basis, there would then exist $v \in G_N$ such that $\lm(v) \mid \lm(u)$, implying $\lm(v) \mid u_{\lambda}$, a contradiction.
  As both $B_N$ and $\overline{B_N}$ are finite sets and $|B_N| = |\overline{B_N}|$, we get $B_N = \overline{B_N}$.
  We see that the family of bases given by \eqref{eq:8} satisfies the property that each basis is a subset of the next.
  This observation and \zcref{prp:3} conclude the proof of \zcref{thr:4}.
\end{proof}

\begin{proof}[Proof of \zcref{thr:3}]
  We recall that the basis of \zcref{thr:4} also gives a basis of $\gr^G(L)$.
  For a partition $\lambda$, we have
  \begin{equation*}
    \alpha^{\len(\lambda)}(L_{\lambda}(\vachalf + W)) \in \alpha^{\len(\lambda)}(G^{\len(\lambda)}L_{1/2 + \Delta(\lambda)}).
  \end{equation*}
  Thus, when calculating the refined character of $\gr^G(L)$, $\Delta(\lambda)$ is incremented by $1/2$.
  Therefore, $\ch_{\gr^G(L)}(t, q) = q^{1/2}p(t, q)$ and together with \zcref{lmm:5}, we conclude the proof of \zcref{thr:3}.
\end{proof}

\begin{corollary}
  \label{crl:1}
  We have the equality
  \begin{equation*}
    K' = K.
  \end{equation*}
\end{corollary}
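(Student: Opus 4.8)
The plan is to upgrade the inclusion $K' \subseteq K$, established just before \Cref{lmm:10}, to an equality by a dimension count in each graded component. Since $K' \subseteq K$, the identity on $\gr^G(M)$ descends to a surjection of $h + \mathbb{N}$-graded $\gr^G(V)$-modules
\begin{equation*}
  \gr^G(M)/K' \twoheadrightarrow \gr^G(M)/K \cong \gr^G(L),
\end{equation*}
which respects the refined grading. Because every graded piece is finite dimensional, it suffices to prove $\dim\bigl(\gr^G(M)/K'\bigr)_{1/2 + n} \le \dim \gr^G(L)_{1/2 + n}$ for all $n \in \mathbb{N}$: combined with the surjection this forces equality of dimensions, hence the map is an isomorphism, and since it is induced by the identity on $\gr^G(M)$ this means $K' = K$. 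Here $\dim \gr^G(L)_{1/2 + n} = |P(n)|$ by \Cref{lmm:6}, the identity $\ch_L(q) = \ch_{\gr^G(L)}(q)$, and $p(q) = \sum_n |P(n)|q^n$.

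To bound $\dim\bigl(\gr^G(M)/K'\bigr)_{1/2 + n}$ from above I would rerun the leading-monomial reduction from the proof of \Cref{thr:4}, but now against $K'$. Fix $N$ and work inside the truncation $F_N$; recall from that proof that $\gr^G(M)_{\le 1/2 + N} \subseteq F_N$ and, more importantly, that every element of $\gr^G(M)$ of weight $\le N$ already lies in $F_N$, so a reduction that preserves the grading and lowers leading monomials never escapes $F_N$. Given $w \in \gr^G(M)_{\le 1/2 + N}$ whose leading monomial is $u_\mu$ with $\mu \notin P$: by definition of $P$ we then have $\mu \in \overline{R}$, so \Cref{lmm:10} furnishes $u \in K'$ with $\lm(u) = u_\mu$; as $u$ has the same weight as $u_\mu$, which is at most $N$, we get $u \in K' \cap F_N$, and subtracting a suitable scalar multiple of $u$ from $w$ strictly decreases its leading monomial. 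The monomial order well-orders the finite set of monomials of any fixed weight, so this terminates with a remainder supported on $\{u_\lambda \mid \lambda \in P,\ \Delta(\lambda) \le N\}$. Hence this set spans $\gr^G(M)_{\le 1/2 + N}/(K' \cap F_N)$, and passing to $N \gg n$ gives $\dim\bigl(\gr^G(M)/K'\bigr)_{1/2 + n} \le |P(n)|$, as needed.

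Equivalently, and a little more cleanly, one can conclude directly at finite level: under the surjection $\gr^G(M)_{\le 1/2 + N}/(K' \cap F_N) \twoheadrightarrow \gr^G(M)_{\le 1/2 + N}/(K \cap F_N)$ the spanning set $\{u_\lambda + (K' \cap F_N) \mid \lambda \in P,\ \Delta(\lambda) \le N\}$ just produced maps onto $\overline{B_N} = B_N$, which is a vector space basis of the target by the proof of \Cref{thr:4}; a spanning set mapping onto a basis under a linear surjection is itself a basis and the surjection is an isomorphism, so $K' \cap F_N = K \cap F_N$ for every $N$, and taking the union over $N$ yields $K' = K$.

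The only real difficulty is bookkeeping: one must check that the reduction against $K'$ can be confined to a single $F_N$ and that the equalities $K' \cap F_N = K \cap F_N$ patch together to $K' = K$ in the limit. This is precisely the truncation machinery already assembled for \Cref{thr:4}; the one new input is the observation that \Cref{lmm:10} produces the reductors inside $K'$ itself, not merely inside $K$, which is exactly what makes the bound against $K'$ go through.
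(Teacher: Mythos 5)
Your argument is correct and is essentially the paper's own proof: both upgrade the known inclusion $K' \subseteq K$ to an equality by a dimension count in the truncations $F_N$, with \Cref{lmm:10} supplying reductors inside $K'$ itself and the count $|B_N| = \sum_{n \le N}|P(n)|$ from the proof of \Cref{thr:4} forcing the two quotients to coincide. The only difference is packaging: the paper invokes a Gr\"obner basis of $K' \cap F_N$ and the standard-monomial dimension formula of \cite[Proposition 3.6.4]{adams_introduction_1994}, where you run the leading-monomial reduction by hand.
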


\begin{proof}
  This proof is a copy of the proof of \zcref{thr:3}.
  We know that $K' \subseteq K$.

  Let $N \in \mathbb{N}$, and let $G'_N$ be a Gröbner basis of $K' \cap F_N$ considered as a submodule of $F_N$.
  Like in the proof of \zcref{thr:3}, we set
  \begin{equation*}
    B'_N = \{u_{\lambda} \mid \text{$\Delta(\lambda) \le N$ and for $u \in G'_N$, $\lm(u) \nmid u_{\lambda}$}\}.
  \end{equation*}
  By \cite[Proposition 3.6.4]{adams_introduction_1994}, we have
  \begin{equation*}
    |B'_N| = \dim\left(\frac{\gr^G(M)_{\le 1/2 + N}}{K' \cap F_N}\right) = \dim\left(\frac{\gr^G(M)_{\le 1/2 + N}}{K' \cap \gr^G(M)_{\le 1/2 + N}}\right).
  \end{equation*}

  We can apply the same argument as in the proof of \zcref{thr:3} and conclude that $B'_N \subseteq \overline{B_N}$, where $\overline{B_N}$ is the defined in the same way.
  But $B_N = \overline{B_N}$ and
  \begin{equation*}
    |B_N| = \dim\left(\frac{\gr^G(M)_{\le 1/2 + N}}{K \cap \gr^G(M)_{\le 1/2 + N}}\right).
  \end{equation*}
  Therefore, we have $|B'_N| \le |B_N|$, and this implies
  \begin{equation*}
    \dim(K' \cap \gr^G(M)_{\le 1/2 + N}) \ge \dim(K \cap \gr^G(M)_{\le 1/2 + N}) \ge \dim(K' \cap \gr^G(M)_{\le 1/2 + N}).
  \end{equation*}
  Consequently, $K' \cap \gr^G(M)_{\le 1/2 + N} = K \cap \gr^G(M)_{\le 1/2 + N}$ for $N \in \mathbb{N}$.
  Taking the union of these subspaces, we get $K' = K$.
\end{proof}

\appendix
\section{An explicit description of $K$}
\label{sec:an-expl-descr}

In \cite[ising-modules.ipynb]{sagemath2}, I wrote a SageMath program to compute $u^W_{\lambda}$ for all partitions $\lambda$ (if $\lambda$ is not a pivot, it returns $0$).
From its output, we obtain:
\begin{align*}
  u^K_{[2]} &= L_{-2} - \tfrac{3}{4}L_{-1}^2, &u^K_{[1, 1, 1]} &= L_{-1}^3, \\
  u^K_{[3, 1, 1]} &= L_{-3}L_{-1}^2, &u^K_{[3, 3]} &= L_{-3}L_{-3} + \tfrac{1}{3}L_{-4}L_{-1}^2, \\
  u^K_{[4, 3, 1]} &= L_{-4}L_{-3}L_{-1}, &u^K_{[4, 4, 1]} &= L_{-4}L_{-4}L_{-1} + \tfrac{9}{8}L_{-5}L_{-3}L_{-1}, \\
  u^K_{[5, 4, 1, 1]} &= L_{-5}L_{-4}L_{-1}^2, &u^K_{[6, 5, 3, 1]} &= L_{-6}L_{-5}L_{-3}L_{-1}.
\end{align*}

However, it turns out that $u^K_{[5, 4, 1, 1]}$ and $u^K_{[6, 5, 3, 1]}$ are redundant, as can be seen in \cite[m11-m15.ipynb]{sagemath2}.
From \cite[Theorem 2]{andrews_singular_2022}, we obtain the following explicit expression for $K$
\begin{equation*}
  K = (u^K_{[2]}, u^K_{[1, 1, 1]}, u^K_{[3, 1, 1]}, u^K_{[3, 3]}, u^K_{[4, 3, 1]}, u^K_{[4, 4, 1]}, \iota_0((a, b)_{\partial}))_{\psn},
\end{equation*}
where
\begin{align*}
  a = L_{-2}^3, b = L_{-4}L_{-3}L_{-2} + \tfrac{1}{6}L_{-5}L_{-2}^2.
\end{align*}

\section{The case $L(1/2, 1/16)$}
\label{sec:case-l12-116}

We can do the same analysis we did for $L(1/2, 1/2)$ but with $L(1/2, 1/16)$ instead.
The arguments are the same, but the computations are, of course, different.
These computations will be briefly shown now.

We recall the definitions of $P^{1/16}$ and $R^{1/16}$ given in \zcref{sec:introduction}.
For the definition of the series $p^{1/16}(t, q)$, the set $P^{1/16}(n, m)$ and other related notation, see \zcref{sec:comb-argum}.
Again, we omit superscripts.

The maximal proper subrepresentation of $M(1/2, 1/16)$ is generated by the singular vectors:
\begin{equation*}
  u_2 = (L_{-2} - \tfrac{4}{3}L_{-1}^2)\vacsixteen, u_4 = (L_{-2}L_{-2} - \tfrac{600}{49}L_{-2}L_{-1}^2 + \tfrac{144}{49}L_{-1}^4 + \tfrac{264}{49}L_{-3}L_{-1} - \tfrac{36}{49}L_{-4})\vacsixteen.
\end{equation*}

\begin{lemma}
  \label{lmm:12}
  We have:
  \begin{align*}
    p_{>2}(t, q) &= f_{2, 2, 0, 0}(t, q), &p_{>2, 1}(t, q) &= f_{4, 2, 1, 1}(t, q), \\
    p_{>2, 1, 1}(t, q) &= f_{9, 4, 5, 4}(t, q) + f_{5, 3, 2, 2}(t, q), &p_{>3, 1, 1, 1}(t, q) &= f_{7, 3, 3, 3}(t, q).
  \end{align*}
\end{lemma}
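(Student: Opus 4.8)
The plan is to mirror, step by step, the combinatorial arguments of \Cref{lmm:2}, \Cref{lmm:3} and \Cref{lmm:4}, replacing the set $R = R^{1/2}$ throughout by $R^{1/16}$. The ordinary partitions of $R^{1/16}$ (those depending on a parameter $r$) are literally the same as those of $R^{1/2}$, so the \emph{shapes} of the recurrences are unchanged; the only new ingredient is that the exceptional partitions of $R^{1/16}$ are different, and, crucially, every one of them contains a part equal to $1$. Hence the exceptional partitions affect only the sub-collections of $P^{1/16}$ that involve ones.

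First I would compute $p_{>2}(t, q)$. Since no exceptional partition of $R^{1/16}$ avoids $1$, the set $P_{>2}^{1/16}$ of partitions with smallest part $\ge 3$ (together with $\emptyset$) is cut out purely by the ordinary forbidden patterns. I would write $P_{>2}^{1/16}$ as a disjoint union of sub-collections indexed by the admissible final one, two or three parts, in the spirit of the union displayed in the proof of \Cref{lmm:2}, but now with extra pieces coming from partitions ending in $3, 3$, which are allowed because $[3, 3] \notin R^{1/16}$. Each sub-collection satisfies a recurrence obtained from the two kinds of bijections used in \Cref{lmm:2}: subtracting $1$ from every part above the fixed tail, and splitting off the smallest part(s). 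The resulting linear system has a unique solution if one exists (each recurrence strictly lowers the number of parts or the $q$-weight), so it suffices to exhibit one; checking the candidate closed forms with \Cref{lmm:1} gives $p_{>2}(t, q) = f_{2, 2, 0, 0}(t, q)$.

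The remaining three series are then obtained by appending ones. Using the bijection $[\lambda_1, \dots, \lambda_m, 1] \mapsto [\lambda_1, \dots, \lambda_m]$ exactly as in \Cref{lmm:3}, I would express $p_{>2, 1}(t, q)$ through the sub-series found above, using that $[3, 3, 1]$ and $[4, 3, 1]$ lie in $R^{1/16}$ and so forbid the corresponding tails; \Cref{lmm:1} then collapses the answer to $f_{4, 2, 1, 1}(t, q)$. For $p_{>2, 1, 1}(t, q)$ I would split $P_{>2, 1, 1}^{1/16} = P_{3, 1, 1}^{1/16} \cup P_{>3, 1, 1}^{1/16}$; in contrast with the $L(1/2, 1/2)$ case, here $[3, 1, 1] \notin R^{1/16}$, so $P_{3, 1, 1}^{1/16}$ is nonempty and produces the extra summand, yielding $p_{>2, 1, 1}(t, q) = f_{9, 4, 5, 4}(t, q) + f_{5, 3, 2, 2}(t, q)$. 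Finally, since $[3, 1, 1, 1] \in R^{1/16}$ (a smallest non-unit part equal to $3$ is incompatible with three ones) and $[1, 1, 1, 1] \in R^{1/16}$ (no partition of $P^{1/16}$ has four ones), one has $P_{>2, 1, 1, 1}^{1/16} = P_{>3, 1, 1, 1}^{1/16}$; expressing this through the previously computed sub-series by appending ones and simplifying with \Cref{lmm:1} gives $p_{>3, 1, 1, 1}(t, q) = f_{7, 3, 3, 3}(t, q)$.

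The main obstacle is bookkeeping rather than conceptual: one must get the disjoint decomposition of $P_{>2}^{1/16}$ into admissible tails exactly right (this is where the content of the ordinary partitions enters, and an omitted or spurious tail would invalidate the whole computation), and then track precisely how each exceptional partition of $R^{1/16}$ prunes the sub-collections containing ones. Once the decompositions and recurrences are in place, verifying every identity is a mechanical, if lengthy, application of \Cref{lmm:1}, just as in \Cref{lmm:2}--\Cref{lmm:4}.
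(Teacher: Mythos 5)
Your proposal follows exactly the strategy the paper itself invokes for \Cref{lmm:12}: the paper's proof is only the one-line sketch ``we use the same strategy as in \Cref{sec:comb-argum},'' and you carry out precisely that plan, redoing the tail decompositions and recurrences of \Cref{lmm:2}--\Cref{lmm:5} with $R^{1/16}$ in place of $R^{1/2}$ and tracking the changed exceptional partitions. One small inaccuracy: the exceptional partition $[2] \in R^{1/16}$ contains no part equal to $1$, but since its only effect is to force all non-unit parts to be at least $3$ (which your decomposition $P = P_{>2} \cup P_{>2, 1} \cup P_{>2, 1, 1} \cup P_{>3, 1, 1, 1}$ already encodes), your conclusion that $P_{>2}^{1/16}$ is constrained only by the ordinary patterns still stands.
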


\begin{proof}[Sketch of proof]
  We use the same strategy as in \zcref{sec:comb-argum}.
\end{proof}

\begin{lemma}
  \label{lmm:13}
  The formal power series $p(t, q)$ is given by
  \begin{equation*}
    p(t, q) = f_{1, 1, 0, 0}(t, q) + f_{4, 2, 1, 1}(t, q) + f_{7, 3, 3, 3}(t, q).
  \end{equation*}
\end{lemma}

\begin{proof}
  This follows from the disjoint union
  \begin{equation*}
    P = P_{>2} \cup P_{>2, 1} \cup P_{>2, 1, 1} \cup P_{>3, 1, 1, 1}
  \end{equation*}
  together with \zcref{lmm:12} and \zcref{lmm:1}.
\end{proof}

The partitions $[2]$, $[1, 1, 1, 1]$, $[3, 1, 1, 1]$, $[3, 3, 1]$, $[4, 3, 1]$, $[4, 4, 1, 1]$, $[5, 4, 1, 1, 1]$, $[5, 5, 1, 1, 1]$, $[6, 5, 3, 1, 1]$, $[6, 6, 3, 1, 1]$, $[7, 6, 4, 1, 1, 1]$ and $[8, 7, 5, 3, 1, 1]$ are pivots of the matrices $A^W_2$, $A^W_4$, $A^W_6$, $A^W_7$, $A^W_8$, $A^W_{10}$, $A^W_{12}$, $A^W_{13}$, $A^W_{16}$, $A^W_{17}$, $A^W_{20}$ and $A^W_{25}$ respectively, see \cite[ising-modules-1-16.ipynb]{sagemath2}.

\bibliographystyle{alpha}
\bibliography{ising-modules}

\end{document}